\documentclass{amsart}

\usepackage[T1]{fontenc}
\usepackage{lmodern}
\usepackage{amsmath,amssymb,mathtools}
\usepackage{bm}
\usepackage{enumitem}
\usepackage{booktabs}
\usepackage{graphicx}
\usepackage{float}
\usepackage{microtype}
\usepackage[letterpaper,textwidth=6.5in,textheight=8.95in,centering]{geometry}
\usepackage[hidelinks,pdfusetitle]{hyperref}

\newtheorem{theorem}{Theorem}[section]
\newtheorem{proposition}[theorem]{Proposition}
\newtheorem{lemma}[theorem]{Lemma}
\newtheorem{corollary}[theorem]{Corollary}
\theoremstyle{definition}
\newtheorem{definition}[theorem]{Definition}
\newtheorem{assumption}[theorem]{Assumption}
\numberwithin{equation}{section}
\theoremstyle{remark}
\newtheorem{remark}[theorem]{Remark}

\newcommand{\R}{\mathbb{R}}

\newcommand{\Sph}{\mathbb{S}}
\newcommand{\dt}{\Delta t}
\newcommand{\Isp}{I^{\mathrm{sp}}}
\newcommand{\norm}[1]{\left\lVert #1\right\rVert}
\newcommand{\abs}[1]{\left\lvert #1\right\rvert}
\newcommand{\tr}{\operatorname{Tr}}

\title[Error Bounds for Boundary-Stopped Cubature Schemes]
{Error Bounds for Boundary-Stopped Positive Cubature Schemes for Degenerate
	Hamilton--Jacobi--Bellman Equations}
\author{Haoran Xu}
\address{School of Mathematical Sciences, Soochow University,
	Suzhou 215006, China}
\email{20244007005@stu.suda.edu.cn}
\author{Xingye Yue}
\address{Center for Financial Engineering, School of Mathematical Sciences,
	Soochow University, Suzhou 215006, China}
\email{xyyue@suda.edu.cn}
\thanks{Corresponding author: Xingye Yue.}
\date{}
\subjclass[2020]{65M12, 65M15, 49L25, 35D40}
\keywords{Hamilton--Jacobi--Bellman equation, monotone scheme, first-contact
	stopping, positive cubature, Dirichlet boundary condition, viscosity solution}
\hypersetup{
	pdftitle={Error Bounds for Boundary-Stopped Positive Cubature Schemes for Degenerate Hamilton--Jacobi--Bellman Equations},
	pdfauthor={Haoran Xu and Xingye Yue},
	pdfkeywords={Hamilton--Jacobi--Bellman equation, monotone scheme, first-contact stopping, positive cubature, Dirichlet boundary condition, viscosity solution}
}

\begin{document}
	\begin{abstract}
We derive error bounds for a boundary-stopped positive cubature scheme for
degenerate parabolic Hamilton--Jacobi--Bellman equations with Dirichlet data
on bounded domains.
The scheme balances antipodal branches at their first boundary contacts and
uses a control-dependent effective time.  Its stopped-barrier remainder is
\(O(\tau\Delta t^{\gamma/2})\), and the total mass of interpolated branches
is at most \(2\tau/\Delta t\), where \(\tau\) is the effective time.
After division by the effective time, these estimates give a consistency
bound that is uniform as \(\tau\to0\).  Under a common strict boundary barrier and the stated mesh
conditions, interior consistency, coefficient shaking, switching, and
localized comparison yield
\[
\|(u-u_h)^+\|_\infty\le C(\Delta t^{1/4}+h\Delta t^{-1/2}),
\qquad
\|(u_h-u)^+\|_\infty\le C(\Delta t^{1/10}+h^{1/2}\Delta t^{-1/4}).
\]
Thus \(\Delta t\asymp h^{10/7}\) gives an \(O(h^{1/7})\) maximum-norm
bound.  The analysis applies to any fixed centrally symmetric positive
degree-two cubature and permits rank-deficient diffusion.  Numerical examples examine parallel heat loss to a cold wall and
control-dependent, rank-deficient diffusion.
\end{abstract}

\maketitle
	
	\section{Introduction}
\label{sec:introduction}

Hamilton--Jacobi--Bellman (HJB) equations describe value functions in
stochastic optimal control and arise in applications such as option
pricing under uncertain model parameters \cite{KushnerDupuis2001,MaForsyth2017}.
Their diffusion matrices may be anisotropic or rank deficient, and their
solutions need not be smooth. The viscosity solution framework provides
comparison and well-posedness for such equations
\cite{CrandallIshiiLions1992}. At the discrete level, monotonicity
preserves order between approximate solutions. Together with stability,
consistency, and comparison for the limiting problem, it provides a
general route to convergence
\cite{BarlesSouganidis1991,CrandallLions1996}.

Constructing a monotone approximation becomes more difficult when mixed
second derivatives are present. On a prescribed compact stencil,
nonnegative coefficients impose restrictions on the diffusion tensor
and the mesh \cite{MotzkinWasow1952,Reisinger2018}.
Related compatibility conditions arise in finite volume methods for
anisotropic diffusion
\cite{NordbottenAavatsmarkEigestad2007,ShengYuan2008};
Droniou \cite{Droniou2014} reviews the roles of coercivity and discrete
maximum principles in this setting. For divergence-form problems,
Droniou and Le Potier \cite{DroniouLePotier2011} construct schemes
satisfying local maximum and minimum principles, while Canc\`es,
Cathala, and Le Potier \cite{CancesCathalaLePotier2013} enforce these
bounds through nonlinear corrections of finite volume schemes.
Another approach adapts the stencil directions to the diffusion tensor,
as in the sparse nonnegative construction of Fehrenbach and Mirebeau
\cite{FehrenbachMirebeau2014}. These methods illustrate how the choice
of stencil and the form of the discrete operator affect preservation
of the continuous bounds.

For HJB equations, positive wide stencils have been developed through
generalized finite differences \cite{BonnansZidani2003} and
semi-Lagrangian approximations
\cite{CamilliFalcone1995,DebrabantJakobsen2013}.
The latter evaluate the solution along diffusion directions and use
positive interpolation at points outside the spatial grid. This allows
the diffusion directions to vary with the control without requiring
diagonal dominance in the grid coordinates. Ma and Forsyth
\cite{MaForsyth2017} use a combination of compact and rotated wide
stencils for the two-factor uncertain-volatility equation.
The probabilistic interpretation connects these constructions to
locally consistent Markov-chain approximations
\cite{KushnerDupuis2001}. Probabilistic schemes have also been developed
for broader classes of fully nonlinear parabolic equations, with
structural conditions ensuring monotonicity
\cite{FahimTouziWarin2011,GuoZhangZhuo2015}.

The present work continues our study of expectation-based
positivity-preserving discretizations
\cite{RenXuYue2024,XuRenYue2026,XuLiYue2026}.
For linear parabolic equations, Xu, Ren, and Yue
\cite{XuRenYue2026} combine the Feynman--Kac representation with
positive wide-stencil interpolation and incorporate Dirichlet,
Neumann, and periodic data through stopping, reflection, and wrapping.
Xu, Li, and Yue \cite{XuLiYue2026} develop the corresponding approach
for linear elliptic problems. These two works establish error estimates
under smoothness assumptions on the solution. Here we study a controlled
parabolic equation: both the diffusion directions and the branch
durations depend on the control, and the value function is treated as
a viscosity solution. Our objective is to retain the positive
expectation-based update and derive maximum-norm error bounds in this
nonlinear setting.

Boundary treatment is essential for this extension. A wide branch can
reach the Dirichlet boundary before the nominal time step ends, so its
duration and its weight must reflect the contact geometry.
Milstein and Tretyakov \cite{MilsteinTretyakov2001} develop
probabilistic layer methods for nonlinear parabolic Dirichlet problems.
Reisinger and Rotaetxe Arto \cite{ReisingerRotaetxe2017} analyze
boundary truncation for semi-Lagrangian HJB schemes and its effects on
consistency and stability. More recently, Carlini, Picarelli, and Silva
\cite{CarliniPicarelliSilva2026} stop the diffusion-column branches at
first contact, balance the two signs within each column, and balance
the columns. With positive interpolation, they prove consistency and
convergence under their Dirichlet hypotheses. We use this first-contact
and pair-balancing mechanism for a general fixed centrally symmetric
positive degree-two cubature. Its antipodal directions
\(\pm\eta_j\in\mathbb R^p\) and positive pair masses \(\rho_j\)
satisfy \(\sum_j\rho_j=1\) and
\(\sum_j\rho_j\eta_j\eta_j^\top=I_p\), where \(p\) is the
dimension of the driving noise and \(I_p\) is the identity matrix.
The analysis includes unequal pair masses and rank-deficient diffusion.

Quantitative estimates require smooth comparison functions even when
the viscosity solution itself has limited regularity. Krylov's
coefficient-shaking method \cite{Krylov2000} constructs suitable
approximations by perturbing the coefficients before mollification.
Barles and Jakobsen \cite{BarlesJakobsen2007} combine this method with
switching systems to obtain both one-sided error bounds for general
monotone schemes. Debrabant and Jakobsen \cite{DebrabantJakobsen2013}
apply this approach to semi-Lagrangian diffusion schemes in the whole
space. Error estimates on bounded domains are developed by Dong and
Krylov \cite{DongKrylov2007} and, using a common strict boundary barrier
and switching-system approximations, by Picarelli, Reisinger, and
Rotaetxe Arto \cite{PicarelliReisingerRotaetxe2020}.
We use the latter framework to control the error near the boundary and
to construct the interior comparison functions.

For the stopped scheme considered here, the required consistency
estimates must be uniform in the effective time \(\tau\), the weighted
mean of the branch durations. This quantity depends on the control and
can be much smaller than the nominal step \(\Delta t\). We show that
the stopped-barrier remainder is \(O(\tau\Delta t^{\gamma/2})\),
where \(\gamma\in(0,1]\) is the H\"older exponent in the boundary
regularity assumptions. We also prove that the total weight of branches
requiring interpolation is at most \(2\tau/\Delta t\). For a mesh of
diameter \(h\), their interpolation error is therefore
\(O(\tau h^2/\Delta t)\). Both errors retain the factor \(\tau\),
so normalization gives the uniform residual bound
\(C(\Delta t^{\gamma/2}+h^2/\Delta t)\).

These bounds yield discrete barriers in a strip near the boundary.
Farther inside the domain, the branches are unstopped and central
symmetry gives a sharper consistency estimate. We combine that estimate
with scalar and switching comparison functions, and then apply
localized discrete comparison to obtain the two one-sided bounds in
Theorem~\ref{thm:bc-rate}. Their different exponents come from the
\(O(\varepsilon)\) scalar approximation and the
\(O(\varepsilon^{1/3})\) switching approximation, where
\(\varepsilon\) denotes the smoothing scale. In particular,
\(\Delta t\asymp h^{10/7}\) gives an \(O(h^{1/7})\)
maximum-norm bound under the stated coefficient, boundary, and mesh
assumptions. The numerical examples examine heat loss to a cold wall, error
and cost under mesh refinement, and controlled diffusion with and without drift.

Section~\ref{sec:bounded} specifies the equation, boundary conditions,
and mesh. Sections~\ref{sec:scheme} and~\ref{sec:BS} develop the stopped
weights, monotonicity, comparison, and consistency.
Section~\ref{sec:quantitative} proves the error bounds, and
Section~\ref{sec:numerics} presents the numerical experiments.
The appendices supply the Taylor estimates and the extensions used
for the auxiliary equations.

\section{Preliminaries}
	\label{sec:bounded}
	
	\subsection{Notation}
	
	Let \(O\subset\mathbb R^N\), \(N\ge1\), be a bounded domain, and set
	\[
	Q_T=[0,T)\times O,
	\qquad
	Q_T^{\mathrm{cl}}=[0,T]\times\overline O.
	\]
	For space--time points \(z=(t,x)\) and \(z'=(s,y)\), define
	\begin{equation}
		\varrho(z,z')
		:=
		|x-y|+|t-s|^{1/2}.
		\label{eq:parabolic-distance}
	\end{equation}
	This is the distance used throughout the paper.
	
	If \(U\subset\mathbb R\times\mathbb R^N\) and \(w:U\to\mathbb R\), define
	\begin{align*}
		[w]_{x;1;U}
		&:=
		\sup_{\substack{(t,x),(t,y)\in U\\x\ne y}}
		\frac{|w(t,x)-w(t,y)|}{|x-y|},\\
		[w]_{t;1/2;U}
		&:=
		\sup_{\substack{(t,x),(s,x)\in U\\t\ne s}}
		\frac{|w(t,x)-w(s,x)|}{|t-s|^{1/2}},\\
		[w]_{\gamma;U}
		&:=
		\sup_{\substack{z,z'\in U\\z\ne z'}}
		\frac{|w(z)-w(z')|}{\varrho(z,z')^\gamma},
		\qquad
		0<\gamma\le1.
	\end{align*}
	Thus \([w]_{x;1;U}\) is the uniform spatial Lipschitz seminorm,
	whereas \([w]_{t;1/2;U}\) is the uniform \(1/2\)-H\"older seminorm
	in time.
	
	For \(w\) with the indicated derivatives, put
	\[
	\|w\|_{\gamma;U}:=\|w\|_{L^\infty(U)}+[w]_{\gamma;U}
	\]
	and define
	\begin{equation}
		\|w\|_{2,\gamma;U}
		:=
		\sum_{2q+|\alpha|\le2}
		\|\partial_t^qD_x^\alpha w\|_{\gamma;U}.
		\label{eq:parabolic-holder-norm}
	\end{equation}
	We write \(w\in C^{2,\gamma}(U)\) when this norm is finite and all
	derivatives appearing in \eqref{eq:parabolic-holder-norm} are continuous.
	In the condition \(2q+|\alpha|\le2\), one time derivative has weight two
	and one spatial derivative has weight one.  All H\"older spaces and
	seminorms below use the distance \(\varrho\) and the convention in
	\eqref{eq:parabolic-holder-norm}; no additional ``parabolic'' subscript
	will be used.
	
	Throughout the paper, \(|\cdot|\) denotes the absolute value for scalars,
	the Euclidean norm for vectors, and the Frobenius norm for matrices.
	The same formulas define the corresponding seminorms for vector- and
	matrix-valued functions using these norms.
	
	For \(\alpha>0\), let
	\begin{equation}
		O_\alpha
		:=
		\bigl\{
		x\in O:
		\operatorname{dist}(x,\partial O)>\alpha
		\bigr\}.
		\label{eq:contracted-domain}
	\end{equation}
	We write \(\mathbb S^N\) for the space of symmetric \(N\times N\) matrices,
	\(\langle\cdot,\cdot\rangle\) for the Euclidean inner product, and
	\(\tr\) for the trace.
	
	\subsection{Problem setting and assumptions}
	
	Let
	\[
	\sigma(t,x,a)\in\R^{N\times p},\quad
	\mu(t,x,a)\in\R^N,\quad
	r(t,x,a),f(t,x,a)\in\R,
	\]
	where $p\geq1$ is the noise dimension and \((A,d_A)\) is the control
	space.  No relation between $p$ and the state dimension $N$ is needed.
	We use
	\begin{equation}
		\label{eq:covariance-factor}
		A^a(t,x):=\sigma(t,x,a)\sigma(t,x,a)^\top\in\Sph^N
	\end{equation}
	for the diffusion covariance and call $\sigma^a$ a diffusion factor.
	Define the fixed-control residual
	\begin{equation*}
		F^a[\varphi](t,x)
		:=
		\partial_t\varphi+
		\frac12\tr(A^aD^2\varphi)
		+\langle\mu^a,D\varphi\rangle-r^a\varphi+f^a,
	\end{equation*}
	where all coefficients on the right are evaluated at $(t,x,a)$.  The
	bounded-domain problem considered in this paper is written directly as
	\begin{equation}
		\label{eq:bounded-HJB}
		\begin{cases}
			\displaystyle\inf_{a\in A}F^a[u](t,x)=0,
			&(t,x)\in Q_T,\\
			u(t,x)=u_\partial(t,x),
			&(t,x)\in[0,T)\times\partial O,\\
			u(T,x)=\Psi(x),
			&x\in\overline O.
		\end{cases}
	\end{equation}
	For comparison with the standard backward HJB notation, set
	\begin{equation}
		\label{eq:H}
		H(t,x,z,\xi,X)
		:=\sup_{a\in A}\mathcal L^a(t,x,z,\xi,X),
		\qquad
		\mathcal L^a(t,x,z,\xi,X)
		:=-\frac12\tr(A^aX)-\mu^a\!\cdot\xi+r^az-f^a.
	\end{equation}
	Since \(F^a[\varphi]=\partial_t\varphi-\mathcal L^a[\varphi]\),
	the differential equation in \eqref{eq:bounded-HJB} is equivalently
	\[
	-\partial_tu+H(t,x,u,Du,D^2u)=0.
	\]
	
	When $A=\{a\}$ is a singleton, \eqref{eq:bounded-HJB} reduces to
	\begin{equation}
		\label{eq:linear-parabolic}
		\partial_tu+\frac12\tr(\sigma\sigma^\top D^2u)
		+\langle\mu,Du\rangle-ru+f=0
		\quad\text{in }Q_T,
	\end{equation}
	with the same Dirichlet and terminal data.
	
	\begin{assumption}[Bounded-domain data]
		\label{ass:bounded}
		The following conditions hold.
		\begin{enumerate}[label=\textup{(A\arabic*)}]
			\item There is a constant \(K>0\) such that
			\[
			\norm{\Psi}_\infty+\norm{u_\partial}_\infty
			+\sup_{a\in A}\left(
			\norm{\sigma^a}_\infty+\norm{\mu^a}_\infty
			+\norm{r^a}_\infty+\norm{f^a}_\infty
			\right)
			\leq K.
			\]
			The terminal and lateral boundary data satisfy
			\begin{align*}
				|\Psi(x)-\Psi(y)|
				&\leq K|x-y|,
				&&x,y\in\overline O,\\
				|u_\partial(t,x)-u_\partial(s,y)|
				&\leq K\bigl(|x-y|+|t-s|^{1/2}\bigr),
				&&(t,x),(s,y)\in[0,T]\times\partial O.
			\end{align*}
			For each \(g\in\{\sigma,\mu,r,f\}\),
			\[
			|g(t,x,a)-g(s,y,a)|
			\leq K\bigl(|x-y|+|t-s|^{1/2}\bigr)
			\]
			for all
			\[
			(t,x),(s,y)\in[0,T]\times\overline O,
			\qquad a\in A.
			\]
			
			\item \((A,d_A)\) is a nonempty compact metric space.  For every fixed
			\((t,x)\in[0,T]\times\overline O\), the coefficients
			\(\sigma,\mu,r,f\) are continuous in \(a\).
		\end{enumerate}
	\end{assumption}
	
	\begin{remark}[Normalization of the zero-order coefficient]
Choose \(\lambda\ge\|r^-\|_\infty\), and put
\(q(t)=e^{-\lambda(T-t)}\).  The transformation
\[
\widetilde u=qu,\qquad \widetilde r=r+\lambda,\qquad
\widetilde f=qf,\qquad \widetilde u_\partial=qu_\partial
\]
gives \(\widetilde F^a[qu]=qF^a[u]\) for every control, with the same
terminal datum because \(q(T)=1\).  We henceforth apply the scheme to the
transformed equation and drop the tildes, so that
\begin{equation}
r(t,x,a)\ge0\qquad\text{on }[0,T]\times\overline O\times A.
\label{eq:nonnegative-r-normalization}
\end{equation}
The original approximation is recovered as \(q^{-1}\widetilde u_h\);
the bounded factor \(q^{-1}\) leaves all maximum-norm rates unchanged.
The lifting and barrier below transform as \(\widetilde G=qG\) and
\(\widetilde\zeta=q\zeta\).  Their boundary and terminal compatibility
conditions are preserved, and a homogeneous barrier residual at most
\(-\beta\) becomes at most \(-e^{-\lambda T}\beta\).
\end{remark}
	
	\subsection{Strong and weak Dirichlet viscosity solutions}
	
	In the definitions below, local maxima and minima and the corresponding
	\(C^{1,2}\) test functions are understood relative to
	\(Q_T^{\mathrm{cl}}\).  At \(t=0\), the time derivative is one-sided.
	
	\begin{definition}[Strong Dirichlet viscosity solution]
		\label{def:strong-boundary}
		A strong Dirichlet viscosity subsolution is an upper semicontinuous function
		\(v:Q_T^{\mathrm{cl}}\to\mathbb R\) satisfying the following two conditions.  First,
		whenever \((t,x)\in Q_T\) and \(v-\varphi\) has a local maximum at
		\((t,x)\), with \(v(t,x)=\varphi(t,x)\),
		\begin{equation}
			\inf_{a\in A}F^a[\varphi](t,x)\geq0,
			\label{eq:viscosity-interior}
		\end{equation}
		Second,
		\begin{equation}
			v(t,x)\leq u_\partial(t,x)
			\quad\text{on }[0,T)\times\partial O,
			\qquad
			v(T,x)\leq\Psi(x)
			\quad\text{on }\overline O.
			\label{eq:viscosity-strong-boundary}
		\end{equation}
		
		A strong Dirichlet viscosity supersolution is a lower semicontinuous function
		\(v:Q_T^{\mathrm{cl}}\to\mathbb R\) satisfying the following two conditions.  First,
		whenever \((t,x)\in Q_T\) and \(v-\varphi\) has a local minimum at
		\((t,x)\), with \(v(t,x)=\varphi(t,x)\),
		\[
		\inf_{a\in A}F^a[\varphi](t,x)\leq0,
		\]
		Second,
		\[
		v(t,x)\geq u_\partial(t,x)
		\quad\text{on }[0,T)\times\partial O,
		\qquad
		v(T,x)\geq\Psi(x)
		\quad\text{on }\overline O.
		\]
		
		A strong Dirichlet viscosity solution is a continuous function
		\(v:Q_T^{\mathrm{cl}}\to\mathbb R\) that is both a strong subsolution and a
		strong supersolution.
	\end{definition}
	
	\begin{definition}[Weak Dirichlet viscosity solution]
		\label{def:bounded-viscosity}
		A weak Dirichlet viscosity subsolution is an upper semicontinuous function
		\(v:Q_T^{\mathrm{cl}}\to\mathbb R\) satisfying the following three conditions.  First,
		whenever \((t,x)\in Q_T\) and \(v-\varphi\) has a local maximum at
		\((t,x)\), with \(v(t,x)=\varphi(t,x)\),
		\[
		\inf_{a\in A}F^a[\varphi](t,x)\geq0,
		\]
		Second,
		\begin{equation}
			v(T,x)\leq\Psi(x),
			\qquad x\in\overline O,
			\label{eq:viscosity-weak-terminal}
		\end{equation}
		Third, whenever
		\((t,x)\in[0,T)\times\partial O\) and \(v-\varphi\) has a local maximum
		at \((t,x)\), with \(v(t,x)=\varphi(t,x)\),
		\begin{equation}
			\max\left\{
			\inf_{a\in A}F^a[\varphi](t,x),
			u_\partial(t,x)-v(t,x)
			\right\}
			\geq0.
			\label{eq:viscosity-lateral}
		\end{equation}
		
		A weak Dirichlet viscosity supersolution is a lower semicontinuous function
		\(v:Q_T^{\mathrm{cl}}\to\mathbb R\) satisfying the following three conditions.  First,
		whenever \((t,x)\in Q_T\) and \(v-\varphi\) has a local minimum at
		\((t,x)\), with \(v(t,x)=\varphi(t,x)\),
		\[
		\inf_{a\in A}F^a[\varphi](t,x)\leq0,
		\]
		Second,
		\[
		v(T,x)\geq\Psi(x),
		\qquad x\in\overline O,
		\]
		Third, whenever
		\((t,x)\in[0,T)\times\partial O\) and \(v-\varphi\) has a local minimum
		at \((t,x)\), with \(v(t,x)=\varphi(t,x)\),
		\begin{equation}
			\min\left\{
			\inf_{a\in A}F^a[\varphi](t,x),
			u_\partial(t,x)-v(t,x)
			\right\}
			\leq0.
			\label{eq:viscosity-lateral-super}
		\end{equation}
		
		A weak Dirichlet viscosity solution is a continuous function
		\(v:Q_T^{\mathrm{cl}}\to\mathbb R\) that is both a weak subsolution and a
		weak supersolution.
	\end{definition}
	
	\begin{remark}
		The subsolution and supersolution conditions in
		Definitions~\ref{def:strong-boundary} and
		\ref{def:bounded-viscosity} follow the strong and weak Dirichlet notions
		in \cite[Definitions~2.2 and~2.3]{CarliniPicarelliSilva2026}, with the
		zeroth-order term \(r^av\) included.  Reversing the overall sign expresses
		the conditions in terms of \(\inf_{a\in A}F^a\).  Thus upper contacts of subsolutions carry the
		inequality \(\inf_aF^a\ge0\), while lower contacts of supersolutions carry
		\(\inf_aF^a\le0\).  We use the continuous-solution case of those
		definitions.  The two notions differ only on the lateral
		boundary.  The strong definition imposes the lateral boundary data
		pointwise, while the weak definition uses
		\eqref{eq:viscosity-lateral} and
		\eqref{eq:viscosity-lateral-super}.  Both definitions impose the
		terminal data pointwise.  Under the barrier assumptions in
		Section~\ref{sec:quantitative},
		Lemma~\ref{lem:relaxed-is-strong} shows that the two definitions are
		equivalent.  The binary \(\max/\min\) in the weak boundary complementarity
		conditions and in later switching obstacles are not extrema over the control
		set; all control-set extrema are denoted by \(\inf\) or \(\sup\).
	\end{remark}
	
	\subsection{Spatial discretization and interpolation}
	
	Let \(O_h\subset\mathbb R^N\) be a polyhedral approximation of \(O\), and
	let \(\mathcal T_h\) be a conforming simplicial triangulation of \(O_h\)
	with vertex set
	\[
	G_h=\{x_i:1\leq i\leq N_h\},
	\qquad
	h:=\max_{E\in\mathcal T_h}\operatorname{diam}E.
	\]
	Here \(d_H\) denotes Hausdorff distance.  The constants
	\(C_\partial,C_p\), and \(C_{\rm patch}\) below are independent of \(h\).
	Let \(\{\psi_i\}_{i=1}^{N_h}\) be the \(P_1\) nodal basis on
	\(\mathcal T_h\); thus \(\psi_i\ge0\) and \(\sum_i\psi_i=1\).
	We assume
	
	\begin{enumerate}[label=\textup{(G\arabic*)}]
		\item \(G_h\cap\partial O_h=G_h\cap\partial O\), and every remaining
		vertex lies in \(O\).
		
		\item Every boundary element has at most one face in \(\partial O_h\).
		
		\item Every \(\mathcal T_h\) is conforming and consists of nondegenerate
		simplices.  No uniform minimum-angle condition is imposed.
		
		\item Let \(\mathcal X_h\subset O\) be the set of off-grid branch endpoints
		at which the scheme requests interpolation.  The boundary approximation and
		the evaluation map
		\(p_h:\overline O\to\overline O_h\) satisfy
		\[
		d_H(\partial O,\partial O_h)\le C_\partial h^2,
		\qquad p_h(x_i)=x_i,
		\qquad \sup_{x\in\overline O}|p_h(x)-x|\le C_p h^2.
		\]
		For \(x\in\mathcal X_h\), let
		\[
		\mathcal V_h(x):=
		\{\ell:\psi_\ell(p_h(x))>0\}
		\]
		be the indices of the vertices used to interpolate at \(x\).  Every
		\(\ell\in\mathcal V_h(x)\) satisfies
		\[
		|x_\ell-x|\le C_{\rm patch}h,
		\qquad [x,x_\ell]\subset\overline O,
		\qquad [x,x_\ell)\subset O.
		\]
	\end{enumerate}
	
	For \(\Phi:G_h\to\mathbb R\), define
	\begin{equation}
		\Isp[\Phi](x)
		:=
		\sum_{i=1}^{N_h}\psi_i(p_h(x))\Phi(x_i),
		\qquad x\in\overline O.
		\label{eq:Isp}
	\end{equation}
	The interpolation is monotone, preserves constants, and, for every
	\(\varphi\in C^2(\overline O)\), satisfies
	\begin{equation}
		\sup_{x\in\mathcal X_h}
		\left|
		\Isp[\varphi|_{G_h}](x)-\varphi(x)
		\right|
		\le C_Ih^2\left(
		\norm{D\varphi}_{L^\infty(\overline O)}
		+\norm{D^2\varphi}_{L^\infty(\overline O)}
		\right),
		\label{eq:Isp-error}
	\end{equation}
	where \(C_I:=C_p+\tfrac12C_{\rm patch}^2\) is independent of
	\(h\) and \(\varphi\).  Indeed, for fixed \(x\in\mathcal X_h\), put
	\(\lambda_\ell=\psi_\ell(p_h(x))\).  Then
	\(\lambda_\ell\ge0\), \(\sum_\ell\lambda_\ell=1\), and
	\(\sum_\ell\lambda_\ell x_\ell=p_h(x)\).  Taylor expansion from \(x\)
	to each active vertex, along the segment in \textup{(G4)}, gives
	\[
	\left|\sum_\ell\lambda_\ell\varphi(x_\ell)-\varphi(x)\right|
	\le \norm{D\varphi}_\infty|p_h(x)-x|
	+\frac12\norm{D^2\varphi}_\infty
	\sum_\ell\lambda_\ell|x_\ell-x|^2,
	\]
	and \eqref{eq:Isp-error} follows from \textup{(G4)}.
	
	Choose \(N_{\dt}\in\mathbb N\) and set
	\[
	\dt=\frac{T}{N_{\dt}},
	\qquad
	t_n=n\dt,
	\qquad
	n=0,\ldots,N_{\dt}.
	\]
	We denote by \(u_{h,i}^n\) the numerical approximation of \(u(t_n,x_i)\)
	and write \(u_h^n:=(u_{h,i}^n)_{i=1}^{N_h}\).  The full space--time grid is
\begin{equation}
G_{h,\dt}:=\{(t_n,x_i):0\le n\le N_{\dt},\ x_i\in G_h\}.
\label{eq:space-time-grid}
\end{equation}
	\section{The stopped cubature scheme}
	\label{sec:scheme}
	
	At a fixed time--space node and for one trial control, the diffusion is first
	replaced by a finite symmetric random increment.  Each resulting branch is
	then followed until the end of the time step or its first contact with the
	Dirichlet boundary.  Stopping changes the two travel times in an antipodal
	pair, so the original cubature masses must be adjusted to preserve the first
	two moments.  After this discrete random step has been constructed, the
	continuation value, discount, and source term are added to form one
	fixed-control row.  The HJB update is obtained by taking the infimum of that
	complete row over the control set.
	
	\subsection{Cubature of the frozen diffusion}
	\label{sec:symmetric-degree-two-rules}
	
	\begin{definition}[Symmetric positive cubature of degree two]
		\label{def:symmetric-cubature}
		Let $p$ be the noise dimension.  A symmetric positive cubature of degree
		two is a labeled collection
		\begin{equation}
			\label{eq:cubature-data}
			\mathcal C
			=\{(\rho_j/2,\eta_j),(\rho_j/2,-\eta_j):1\leq j\leq M\},
		\end{equation}
		where $\eta_j\in\R^p\setminus\{0\}$, $\rho_j>0$, and
		\begin{equation}
			\label{eq:cubature-moments}
			\sum_{j=1}^M\rho_j=1,
			\qquad
			\sum_{j=1}^M\rho_j\eta_j\eta_j^\top=I_p.
		\end{equation}
		Equivalently, let \(J_{\mathcal C}\) be a random pair index and
\(\xi_{\mathcal C}\in\{-1,1\}\) its random sign, with
\begin{equation}
\mathbb P(J_{\mathcal C}=j,\xi_{\mathcal C}=\pm1)=\frac{\rho_j}{2},
\qquad Z_{\mathcal C}:=\xi_{\mathcal C}\eta_{J_{\mathcal C}}.
\label{eq:cubature-random-vector}
\end{equation}
Then

\begin{equation}
			\label{eq:cubature-random-moments}
			\mathbb E[Z_{\mathcal C}]=0,
			\qquad
			\mathbb E[Z_{\mathcal C}Z_{\mathcal C}^\top]=I_p.
		\end{equation}
	\end{definition}
	
	The cubature $\mathcal C$ is fixed throughout a refinement family.  Thus
	$M$, the masses $\rho_j$, and the nodes $\eta_j$ are independent of $h$,
	$\dt$, and $a$.  Write
	\[
	\mu_i^n(a)=\mu(t_n,x_i,a),
	\qquad
	\sigma_i^n(a)=\sigma(t_n,x_i,a),
	\qquad
	A_i^n(a)=\sigma_i^n(a)\sigma_i^n(a)^\top.
	\]
	For fixed $(n,i,a)$, the frozen diffusion over an elapsed time
	$q\in[0,\dt]$ has the Gaussian representation
	\[
	x_i+q\mu_i^n(a)+\sqrt q\,\sigma_i^n(a)Z,
	\qquad Z\sim N(0,I_p).
	\]
	Replacing $Z$ by $Z_{\mathcal C}$ gives the discrete random position
	\begin{equation}
		\label{eq:frozen-cubature-random-variable}
		\widehat X_{n,i}^{a,\mathcal C}(q)
		=x_i+q\mu_i^n(a)+\sqrt q\,\sigma_i^n(a)Z_{\mathcal C}.
	\end{equation}
	Its mean and centered covariance are
	\begin{equation}
		\label{eq:frozen-cubature-physical-moments}
		\begin{aligned}
			\mathbb E[\widehat X_{n,i}^{a,\mathcal C}(q)-x_i]
			&=q\mu_i^n(a),\\
			\mathbb E\!\left[
			(\widehat X_{n,i}^{a,\mathcal C}(q)-x_i-q\mu_i^n(a))
			(\widehat X_{n,i}^{a,\mathcal C}(q)-x_i-q\mu_i^n(a))^\top
			\right]
			&=qA_i^n(a).
		\end{aligned}
	\end{equation}
	Thus the finite random step reproduces the drift and covariance of the
	frozen diffusion.
	
	The support of \eqref{eq:frozen-cubature-random-variable} consists of the
	$2M$ paths
	\begin{equation}
		\label{eq:branch-path}
		\Gamma_{n,i}^{a,j,\pm}(q)
		=x_i+q\mu_i^n(a)
		\mathbin{\pm}\sqrt q\,\sigma_i^n(a)\eta_j,
		\qquad 0\leq q\leq\dt.
	\end{equation}
	Their full-step endpoints are
	\begin{equation}
		\label{eq:trial}
		X_{n,i}^{*,a,j,\pm}
		=\Gamma_{n,i}^{a,j,\pm}(\dt)
		=x_i+\dt\mu_i^n(a)
		\mathbin{\pm}\sqrt\dt\,\sigma_i^n(a)\eta_j.
	\end{equation}
	Before stopping, the two branches in pair $j$ have masses $\rho_j/2$.
	
	Three examples used below are as follows.
	
	\begin{enumerate}[label=\textup{(\alph*)},leftmargin=*]
		\item The column rule is
		\begin{equation}
			\label{eq:column-cubature}
			M=p,
			\qquad
			\rho_j=\frac1p,
			\qquad
			\eta_j=\sqrt p\,e_j,
			\qquad j=1,\ldots,p.
		\end{equation}
		It has $2p$ branches $\pm\sqrt p\,e_j$, each of mass $1/(2p)$.
		
		\item The full Rademacher rule is uniform on $\{-1,1\}^p$.  Choosing one
		representative from each antipodal pair gives
		\begin{equation}
			\label{eq:rademacher-cubature}
			M=2^{p-1},
			\qquad
			\rho_j=2^{1-p},
			\qquad
			\bigcup_{j=1}^M\{\eta_j,-\eta_j\}=\{-1,1\}^p.
		\end{equation}
		The rule therefore has $2^p$ branches, each of mass $2^{-p}$.
		
		\item For $p=2$, let
		\[
		\bar\eta_j=
		\begin{pmatrix}
			\cos(2\pi(j-1)/3)\\
			\sin(2\pi(j-1)/3)
		\end{pmatrix},
		\qquad j=1,2,3.
		\]
		The equiangular tight-frame rule is
		\begin{equation}
			\label{eq:tight-frame-cubature}
			M=3,
			\qquad
			\rho_j=\frac13,
			\qquad
			\eta_j=\sqrt2\,\bar\eta_j.
		\end{equation}
		Since $\sum_{j=1}^3\bar\eta_j\bar\eta_j^\top=\frac32I_2$, this rule
		satisfies \eqref{eq:cubature-moments} and has six branches of mass $1/6$.
	\end{enumerate}
	
	\begin{remark}[Dependence on the diffusion factor]
		\label{rem:factorization}
		The PDE coefficient is the covariance
		$A^a=\sigma^a(\sigma^a)^\top$, but the discrete branches use the
		individual directions $\sigma^a\eta_j$.  Hence replacing $\sigma^a$ by
		$\sigma^aQ$ for an orthogonal matrix $Q$ leaves the PDE unchanged but may
		change the discrete operator if the nodes $\eta_j$ are held fixed.  A
		simultaneous change
		\[
		\widetilde\sigma^a=\sigma^aQ,
		\qquad
		\widetilde\eta_j=Q^\top\eta_j
		\]
		leaves every physical direction unchanged, since
		$\widetilde\sigma^a\widetilde\eta_j=\sigma^a\eta_j$.
	\end{remark}
	
	\subsection{First-contact stopping and boundary weights}
	
	Fix an interior node $x_i$, a time level $t_n$, and a control $a$.  For each
	branch, let
	\[
	\mathcal H_{n,i}^{a,j,\pm}
	=\{q\in(0,\dt]:\Gamma_{n,i}^{a,j,\pm}(q)\notin O\}.
	\]
	If this set is nonempty, define
	\begin{equation}
		\label{eq:branch-duration}
		d_{n,i}^{a,j,\pm}=\inf\mathcal H_{n,i}^{a,j,\pm},
		\qquad
		\chi_{n,i}^{a,j,\pm}=1.
	\end{equation}
	Otherwise set
	\[
	d_{n,i}^{a,j,\pm}=\dt,
	\qquad
	\chi_{n,i}^{a,j,\pm}=0.
	\]
	The path is continuous and starts in $O$, so every duration is positive.  If
	$\chi_{n,i}^{a,j,\pm}=1$, then
	$\Gamma_{n,i}^{a,j,\pm}(d_{n,i}^{a,j,\pm})\in\partial O$.  Contact at
	$q=\dt$ is classified as stopping; the indicator $\chi$, rather than the
	duration alone, records whether prescribed data must be used.  If such a
	contact occurs at terminal time, the terminal datum $\Psi$ is used; contacts
	strictly before $T$ use the lateral datum $u_\partial$.
	
	Stopping generally gives different durations in the two signs of a pair.
	Suppress $(n,i,a)$ temporarily and put
	\[
	s_j^\pm=\sqrt{d_j^\pm}.
	\]
	We first choose the conditional weights inside pair $j$.  Preservation of
	mass and cancellation of the signed noise increment require
	\[
	\gamma_j^++\gamma_j^-=1,
	\qquad
	\gamma_j^+s_j^+=\gamma_j^-s_j^-.
	\]
	The unique solution is
	\begin{equation}
		\label{eq:weight-definitions}
		\gamma_j^+=\frac{s_j^-}{s_j^++s_j^-},
		\qquad
		\gamma_j^-=\frac{s_j^+}{s_j^++s_j^-},
		\qquad
		\theta_j=s_j^+s_j^-.
	\end{equation}
	For these weights,
	\[
	\gamma_j^+(s_j^+)^2+\gamma_j^-(s_j^-)^2=\theta_j.
	\]
	
	Let $\pi_j$ be the total mass assigned to pair $j$.  To retain the relative
	second-moment contribution $\rho_j\eta_j\eta_j^\top$ of the original
	cubature, require
	\[
	\pi_j\theta_j=\tau\rho_j,
	\qquad j=1,\ldots,M,
	\]
	where $\tau$ is the elapsed time represented by the complete stopped row.
	Together with $\sum_j\pi_j=1$, these equations give
	\begin{equation}
		\label{eq:universal-weights}
		\tau=\left(\sum_{j=1}^M\frac{\rho_j}{\theta_j}\right)^{-1},
		\qquad
		\pi_j=\frac{\tau\rho_j}{\theta_j},
		\qquad
		\omega_j^\pm=\pi_j\gamma_j^\pm.
	\end{equation}
	Thus $\omega_j^\pm$ is the final mass of the two stopped branches in pair
	$j$, while $\tau$ is the effective time of the row.
When displaying the dependence on the time level, node, and control, 
we write the effective time as \(\tau_{n,i}(a)\); for a fixed row, 
we abbreviate it to \(\tau\).
	
	\begin{proposition}[Moments of the stopped rule]
		\label{prop:weights}
		At every interior node, the weights in
		\eqref{eq:weight-definitions}--\eqref{eq:universal-weights} are strictly
		positive and satisfy
		\begin{equation}
			\label{eq:weight-moments}
			\begin{aligned}
				\sum_{j=1}^M(\omega_j^++\omega_j^-)&=1,\\
				\sum_{j=1}^M
				(\omega_j^+s_j^+-\omega_j^-s_j^-)\eta_j&=0,\\
				\sum_{j=1}^M
				\bigl(\omega_j^+(s_j^+)^2+\omega_j^-(s_j^-)^2\bigr)
				\eta_j\eta_j^\top&=\tau I_p,\\
				\sum_{j=1}^M
				\bigl(\omega_j^+(s_j^+)^2+\omega_j^-(s_j^-)^2\bigr)&=\tau.
			\end{aligned}
		\end{equation}
		Moreover,
		\begin{equation}
			\label{eq:adaptive-weight-bounds}
			0<\omega_j^+\leq\frac{s_j^-}{s_j^++s_j^-}<1,
			\qquad
			0<\omega_j^-\leq\frac{s_j^+}{s_j^++s_j^-}<1.
		\end{equation}
		If no branch is stopped, then
		$\tau=\dt$, $\pi_j=\rho_j$, and $\omega_j^\pm=\rho_j/2$.
	\end{proposition}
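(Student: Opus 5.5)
The proof is a direct algebraic verification from the definitions \eqref{eq:weight-definitions}--\eqref{eq:universal-weights}. No earlier analytic result is needed. The only input about the branches is that every duration \(d_j^\pm\) is positive and at most \(\dt\). This was noted after \eqref{eq:branch-duration}: the path is continuous and starts in \(O\), so the first exit time is strictly positive. Hence \(s_j^\pm\in(0,\sqrt{\dt}]\) and \(\theta_j>0\), which gives \(\tau>0\), \(\pi_j>0\), \(\gamma_j^\pm\in(0,1)\), and therefore \(\omega_j^\pm>0\).

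For the moment identities I proceed in the order displayed. First, \(\sum_j(\omega_j^++\omega_j^-)=\sum_j\pi_j(\gamma_j^++\gamma_j^-)=\sum_j\pi_j=\tau\sum_j\rho_j/\theta_j=1\), by the definition of \(\tau\).

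Second, each pair contributes
\[
(\omega_j^+s_j^+-\omega_j^-s_j^-)\eta_j=\pi_j(\gamma_j^+s_j^+-\gamma_j^-s_j^-)\eta_j .
\]
Here \(\gamma_j^+s_j^+=s_j^+s_j^-/(s_j^++s_j^-)=\gamma_j^-s_j^-\), so each term vanishes individually. This is stronger than the stated sum.

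Third, compute
\[
\gamma_j^+(s_j^+)^2+\gamma_j^-(s_j^-)^2=\frac{s_j^-(s_j^+)^2+s_j^+(s_j^-)^2}{s_j^++s_j^-}=s_j^+s_j^-=\theta_j .
\]
Multiplying by \(\pi_j\) gives \(\pi_j\theta_j=\tau\rho_j\). Summing against \(\eta_j\eta_j^\top\) and using \eqref{eq:cubature-moments} yields \(\tau I_p\). Summing without \(\eta_j\eta_j^\top\) yields \(\tau\sum_j\rho_j=\tau\). The fourth identity can also be obtained by taking a trace-free route, namely summing \(\pi_j\theta_j\) directly, so it does not depend on the third.

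For \eqref{eq:adaptive-weight-bounds}, write \(\omega_j^\pm=\pi_j\gamma_j^\pm\). It suffices to show \(\pi_j\le1\). Since all \(\pi_k>0\) and \(\sum_k\pi_k=1\), each \(\pi_j\le1\), so \(\omega_j^+\le\gamma_j^+=s_j^-/(s_j^++s_j^-)\). This quantity is strictly less than \(1\) because \(s_j^+>0\), and the same argument applies to \(\omega_j^-\).

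For the unstopped case, all \(d_j^\pm=\dt\), so \(s_j^\pm=\sqrt{\dt}\), \(\theta_j=\dt\), and \(\gamma_j^\pm=1/2\). Then \(\tau=(\sum_j\rho_j/\dt)^{-1}=\dt\), \(\pi_j=\rho_j\), and \(\omega_j^\pm=\rho_j/2\).

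The only point I expect to require care is strict positivity of the durations, which rules out division by zero. It relies on \(x_i\in O\), \(O\) being open, and continuity of \(\Gamma\), so that the infimum in \eqref{eq:branch-duration} is positive. This holds at interior nodes, which is exactly the hypothesis of the proposition. Everything else is bookkeeping.
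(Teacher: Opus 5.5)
Your proof is correct and follows the paper's argument essentially line by line: the pair identities $\gamma_j^++\gamma_j^-=1$, $\gamma_j^+s_j^+=\gamma_j^-s_j^-$, and $\gamma_j^+(s_j^+)^2+\gamma_j^-(s_j^-)^2=\theta_j$ are combined with $\pi_j\theta_j=\tau\rho_j$ and the cubature moments. Your bound $\pi_j\le1$, obtained from $\sum_k\pi_k=1$ with positive summands, is the same observation as the paper's inequality $1/\tau\ge\rho_j/\theta_j$, and your explicit check that every duration is strictly positive usefully makes precise a point the paper only notes after the definition of the durations.
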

	
	\begin{proof}
		The definitions give
		\begin{equation}
			\label{eq:pair-identities}
			\gamma_j^++\gamma_j^-=1,
			\qquad
			\gamma_j^+s_j^+-\gamma_j^-s_j^-=0,
			\qquad
			\gamma_j^+(s_j^+)^2+\gamma_j^-(s_j^-)^2=\theta_j.
		\end{equation}
		Since $\omega_j^\pm=\pi_j\gamma_j^\pm$,
		\[
		\sum_j(\omega_j^++\omega_j^-)
		=\sum_j\pi_j
		=\tau\sum_j\frac{\rho_j}{\theta_j}=1.
		\]
		The second identity in \eqref{eq:pair-identities}, multiplied by
		$\pi_j\eta_j$ and summed over $j$, gives the zero first moment.  The third
		identity and $\pi_j\theta_j=\tau\rho_j$ give
		\[
		\sum_j
		\bigl(\omega_j^+(s_j^+)^2+\omega_j^-(s_j^-)^2\bigr)
		\eta_j\eta_j^\top
		=\tau\sum_j\rho_j\eta_j\eta_j^\top
		=\tau I_p.
		\]
		The same calculation without $\eta_j\eta_j^\top$ gives the last identity
		in \eqref{eq:weight-moments}.
		
		For each $j$,
		\[
		\frac1\tau=\sum_{k=1}^M\frac{\rho_k}{\theta_k}
		\geq\frac{\rho_j}{\theta_j},
		\]
		so $\pi_j=\tau\rho_j/\theta_j\leq1$.  Therefore
		$\omega_j^\pm=\pi_j\gamma_j^\pm\leq\gamma_j^\pm$, which proves
		\eqref{eq:adaptive-weight-bounds}.
		
		If no branch is stopped, then
		$s_j^+=s_j^-=\sqrt\dt$, $\theta_j=\dt$, and
		$\gamma_j^\pm=1/2$.  Substitution into
		\eqref{eq:universal-weights} gives
		$\tau=\dt$, $\pi_j=\rho_j$, and $\omega_j^\pm=\rho_j/2$.
	\end{proof}
	
	For the column rule, the preceding construction recovers the $2p$-branch
	weights of Carlini--Picarelli--Silva
	\cite{CarliniPicarelliSilva2026}.
	
	\begin{corollary}[The $2p$ column weights]
		\label{cor:column-rule}
		Let $\mathcal C$ be the column rule \eqref{eq:column-cubature} and write
		\[
		d_j^\pm=\lambda_j^\pm\dt,
		\qquad 0<\lambda_j^\pm\leq1,
		\qquad
		\Lambda=\left(
		\sum_{k=1}^p\frac1{\sqrt{\lambda_k^+\lambda_k^-}}
		\right)^{-1}.
		\]
		Then the $2p$ paths are
		\[
		x_i+q\mu_i^n(a)
		\mathbin{\pm}\sqrt{pq}\,\sigma_i^{n,j}(a),
		\qquad j=1,\ldots,p,
		\]
		and their stopped weights are
		\begin{equation}
			\label{eq:column-stopped-weights}
			\begin{aligned}
				\tau&=p\dt\Lambda,
				&\pi_j&=\frac{\Lambda}{\sqrt{\lambda_j^+\lambda_j^-}},\\
				\omega_j^+
				&=\frac{\Lambda}
				{\sqrt{\lambda_j^+}
					(\sqrt{\lambda_j^+}+\sqrt{\lambda_j^-})},
				&
				\omega_j^-
				&=\frac{\Lambda}
				{\sqrt{\lambda_j^-}
					(\sqrt{\lambda_j^+}+\sqrt{\lambda_j^-})}.
			\end{aligned}
		\end{equation}
	\end{corollary}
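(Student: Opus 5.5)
The corollary is a direct specialization of the general formulas \eqref{eq:weight-definitions}--\eqref{eq:universal-weights}, so the plan is pure substitution followed by simplification. First, for the paths: inserting the column rule \eqref{eq:column-cubature}, \(\eta_j=\sqrt p\,e_j\), into \eqref{eq:branch-path} gives \(\sigma_i^n(a)\eta_j=\sqrt p\,\sigma_i^{n,j}(a)\), the \(j\)th column scaled by \(\sqrt p\). Hence \(\sqrt q\,\sigma_i^n(a)\eta_j=\sqrt{pq}\,\sigma_i^{n,j}(a)\), which is the stated family of \(2p\) paths.

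Next, for the pair quantities, I write \(s_j^\pm=\sqrt{d_j^\pm}=\sqrt{\dt}\sqrt{\lambda_j^\pm}\). Then \(\theta_j=s_j^+s_j^-=\dt\sqrt{\lambda_j^+\lambda_j^-}\). The conditional weights \(\gamma_j^\pm\) in \eqref{eq:weight-definitions} are homogeneous of degree zero in \((s_j^+,s_j^-)\), so \(\gamma_j^+=\sqrt{\lambda_j^-}/(\sqrt{\lambda_j^+}+\sqrt{\lambda_j^-})\) and symmetrically for \(\gamma_j^-\).

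With \(\rho_j=1/p\), \eqref{eq:universal-weights} gives
\[
\tau^{-1}=\sum_k\frac{1}{p\,\dt\sqrt{\lambda_k^+\lambda_k^-}}=\frac{1}{p\dt\Lambda},
\]
so \(\tau=p\dt\Lambda\). Then
\[
\pi_j=\frac{\tau\rho_j}{\theta_j}=\frac{p\dt\Lambda\cdot p^{-1}}{\dt\sqrt{\lambda_j^+\lambda_j^-}}=\frac{\Lambda}{\sqrt{\lambda_j^+\lambda_j^-}}.
\]

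Finally, \(\omega_j^+=\pi_j\gamma_j^+\). Cancelling \(\sqrt{\lambda_j^-}\) between the denominator of \(\pi_j\) and the numerator of \(\gamma_j^+\) yields \(\Lambda/\bigl(\sqrt{\lambda_j^+}(\sqrt{\lambda_j^+}+\sqrt{\lambda_j^-})\bigr)\), and \(\omega_j^-\) follows symmetrically. No step is a genuine obstacle. The only points needing care are keeping the factor \(p\) from \(\rho_j=1/p\) consistent in \(\tau\), which cancels in \(\pi_j\), and noting that \(0<\lambda_j^\pm\le1\) holds because durations are positive and at most \(\dt\), as observed after \eqref{eq:branch-duration}. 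That positivity ensures every square root and quotient above is well defined.
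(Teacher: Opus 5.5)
Your proposal is correct and follows essentially the same route as the paper: substitute $\rho_j=1/p$, $\eta_j=\sqrt p\,e_j$, $s_j^\pm=\sqrt{\lambda_j^\pm\dt}$ and $\theta_j=\dt\sqrt{\lambda_j^+\lambda_j^-}$ into \eqref{eq:universal-weights} to obtain $\tau$ and $\pi_j$, then multiply $\pi_j$ by $\gamma_j^\pm$ to get the branch weights. Your only additions are details the paper leaves implicit: the explicit derivation of the $2p$ paths from \eqref{eq:branch-path}, and the remark that $\gamma_j^\pm$ is homogeneous of degree zero in $(s_j^+,s_j^-)$.
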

	
	\begin{proof}
		For the column rule, $\rho_j=1/p$ and
		$\eta_j=\sqrt p\,e_j$.  Hence
		$s_j^\pm=\sqrt{\lambda_j^\pm\dt}$ and
		$\theta_j=\dt\sqrt{\lambda_j^+\lambda_j^-}$.  Substitution into
		\eqref{eq:universal-weights} yields $\tau$ and $\pi_j$ in
		\eqref{eq:column-stopped-weights}; multiplying $\pi_j$ by
		$\gamma_j^\pm$ yields the two branch weights.
	\end{proof}
	
	\begin{corollary}[Full Rademacher weights]
		\label{cor:rademacher-rule}
		For \eqref{eq:rademacher-cubature}, let
		\[
		\Lambda_R
		=\left(\sum_{j=1}^{2^{p-1}}\frac1{s_j^+s_j^-}\right)^{-1}.
		\]
		Then
		\[
		\tau=2^{p-1}\Lambda_R,
		\qquad
		\omega_j^+
		=\frac{\Lambda_R}{s_j^+(s_j^++s_j^-)},
		\qquad
		\omega_j^-
		=\frac{\Lambda_R}{s_j^-(s_j^++s_j^-)}.
		\]
	\end{corollary}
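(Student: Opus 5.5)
The corollary follows by substituting the full Rademacher data \eqref{eq:rademacher-cubature} into the general formulas \eqref{eq:weight-definitions}--\eqref{eq:universal-weights}, in the same way Corollary~\ref{cor:column-rule} was obtained from the column rule. No new identity is needed. The only inputs are that all pair masses are equal, $\rho_j=2^{1-p}$ for $j=1,\ldots,M$ with $M=2^{p-1}$, and that $\theta_j=s_j^+s_j^-$ by definition. Here the $s_j^\pm=\sqrt{d_j^\pm}$ are left as square roots of the stopped durations; unlike the column rule, we do not normalize them by $\dt$.

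First compute the effective time. From \eqref{eq:universal-weights},
\[
\frac1\tau=\sum_{j=1}^{2^{p-1}}\frac{\rho_j}{\theta_j}
=2^{1-p}\sum_{j=1}^{2^{p-1}}\frac1{s_j^+s_j^-}
=\frac{2^{1-p}}{\Lambda_R},
\]
so $\tau=2^{p-1}\Lambda_R$. Next, the pair masses are
\[
\pi_j=\frac{\tau\rho_j}{\theta_j}
=\frac{2^{p-1}\Lambda_R\,2^{1-p}}{s_j^+s_j^-}
=\frac{\Lambda_R}{s_j^+s_j^-};
\]
the powers of two cancel exactly. Finally, multiply by the conditional weights $\gamma_j^+=s_j^-/(s_j^++s_j^-)$ and $\gamma_j^-=s_j^+/(s_j^++s_j^-)$. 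This gives
\[
\omega_j^+=\frac{\Lambda_R}{s_j^+(s_j^++s_j^-)},
\qquad
\omega_j^-=\frac{\Lambda_R}{s_j^-(s_j^++s_j^-)},
\]
after cancelling one factor of $s_j^-$, respectively $s_j^+$.

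There is no genuine obstacle here. The only points needing care are the bookkeeping of the factor $2^{p-1}$ and the choice of one representative $\eta_j$ from each antipodal pair in $\{-1,1\}^p$. That choice merely swaps the labels $\pm$ within a pair, and the displayed formulas are symmetric under this swap. Positivity and the moment identities need no separate proof, since they are inherited from Proposition~\ref{prop:weights}.
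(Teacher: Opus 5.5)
Your proposal is correct and follows the paper's proof exactly: it substitutes $\rho_j=2^{1-p}$ into \eqref{eq:universal-weights} and multiplies $\pi_j$ by $\gamma_j^\pm$. Your computations of $\tau$, $\pi_j$, and $\omega_j^\pm$ are all accurate.
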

	
	\begin{proof}
		Insert $\rho_j=2^{1-p}$ into \eqref{eq:universal-weights}.
	\end{proof}
	
	\begin{remark}[Zero diffusion directions]
		If $\sigma_i^n(a)\eta_j=0$, the two branches in pair $j$ coincide with
		the frozen drift path.  Their durations and weights remain well defined,
		and their contribution to the physical covariance is zero.
	\end{remark}
	
	\subsection{Bounds for the effective time}
	
	The effective time is a weighted harmonic mean of the pairwise quantities
	$\theta_j$.  The next estimate shows that it cannot collapse at a fixed
	interior node as the control varies.
	
	\begin{lemma}[Control-uniform effective time]
		\label{lem:effective-time-lower-bound}
		Assume $0<\dt\leq1$ and let $x_i\in O$.  Set
		\[
		d_i=\operatorname{dist}(x_i,\partial O),
		\qquad
		B_{\mathcal C}=1+
		\sup_{\substack{(t,x,a)\in[0,T]\times\overline O\times A\\
				1\leq j\leq M}}
		\bigl(|\mu(t,x,a)|+|\sigma(t,x,a)\eta_j|\bigr).
		\]
		Then, uniformly in $a\in A$,
		\begin{equation}
			\label{eq:effective-time-lower-bound}
			0<m_{n,i}:=\min\left\{\dt,\frac{d_i^2}{B_{\mathcal C}^2}\right\}
			\leq\tau_{n,i}(a)\leq\dt.
		\end{equation}
	\end{lemma}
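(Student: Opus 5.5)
The plan is to bound every branch duration from below by \(m_{n,i}\), and then to pass this bound through the harmonic-mean formula \eqref{eq:universal-weights}. The upper bound is immediate. Each duration satisfies \(d_j^\pm\le\dt\), so \(\theta_j=s_j^+s_j^-\le\dt\). Hence
\[
\frac1\tau=\sum_j\frac{\rho_j}{\theta_j}\ge\frac1{\dt}\sum_j\rho_j=\frac1{\dt},
\]
because \(\sum_j\rho_j=1\), and so \(\tau\le\dt\). For the same reason, a uniform lower bound \(d_j^\pm\ge m_{n,i}\) gives \(\theta_j\ge m_{n,i}\). This in turn gives \(1/\tau\le 1/m_{n,i}\), that is, \(\tau\ge m_{n,i}\). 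Positivity of \(m_{n,i}\) holds because \(x_i\in O\) is open, so \(d_i>0\), and because \(B_{\mathcal C}\) is finite by (A1), which bounds \(\mu\) and \(\sigma\).

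The main step is the duration bound \(d_j^\pm\ge m_{n,i}\), uniformly in \(a\) and \(j\). From \eqref{eq:branch-path}, for \(0\le q\le\dt\le1\) we have
\[
|\Gamma_{n,i}^{a,j,\pm}(q)-x_i|\le q|\mu_i^n(a)|+\sqrt q\,|\sigma_i^n(a)\eta_j|.
\]
Since \(q\le1\), we also have \(q\le\sqrt q\). The right-hand side is therefore at most \(\sqrt q\,(B_{\mathcal C}-1)<\sqrt q\,B_{\mathcal C}\).

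Now suppose \(q<d_i^2/B_{\mathcal C}^2\). Then \(\sqrt q\,B_{\mathcal C}<d_i\), so \(\Gamma(q)\) lies in the open ball \(B(x_i,d_i)\), and this ball is contained in \(O\). Consequently no \(q\in(0,\min\{\dt,d_i^2/B_{\mathcal C}^2\})\) belongs to \(\mathcal H_{n,i}^{a,j,\pm}\). Taking the infimum gives \(d_j^\pm\ge m_{n,i}\) whenever a branch is stopped. When a branch is not stopped, \(d_j^\pm=\dt\ge m_{n,i}\) holds trivially.

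The only delicate points are two. The first is the use of \(\dt\le1\) to absorb the drift term into the \(\sqrt q\) scaling. The second is the strictness of the inequality in the ball argument, which ensures that the infimum of \(\mathcal H\) is still at least \(m_{n,i}\). Both are routine, and the constant \(B_{\mathcal C}\) does not depend on \(a\), which gives the claimed uniformity.
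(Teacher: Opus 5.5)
Your proof is correct and follows essentially the paper's argument: you bound every branch duration below by \(m_{n,i}\) using the displacement estimate \(|\Gamma_{n,i}^{a,j,\pm}(q)-x_i|\le B_{\mathcal C}\sqrt q\) (with \(\dt\le1\)), and then pass \(m_{n,i}\le\theta_j\le\dt\) through the harmonic-mean formula using \(\sum_j\rho_j=1\). The only cosmetic difference is in the duration bound: the paper uses the fact that a stopped endpoint lies on \(\partial O\) to get \(d_i\le B_{\mathcal C}s_j^\pm\), whereas you argue directly from the infimum defining \(d_{n,i}^{a,j,\pm}\) via the open ball \(B(x_i,d_i)\subset O\).
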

	
	\begin{proof}
		Let $s_j^\pm=\sqrt{d_{n,i}^{a,j,\pm}}$.  If a branch is not stopped,
		then $s_j^\pm=\sqrt\dt$.  If it is stopped, its endpoint lies on
		$\partial O$, and hence
		\begin{align*}
			d_i
			&\leq
			\left|(s_j^\pm)^2\mu_i^n(a)
			\mathbin{\pm}s_j^\pm\sigma_i^n(a)\eta_j\right|\\
			&\leq (s_j^\pm)^2|\mu_i^n(a)|
			+s_j^\pm|\sigma_i^n(a)\eta_j|
			\leq B_{\mathcal C}s_j^\pm,
		\end{align*}
		where $s_j^\pm\leq\sqrt\dt\leq1$.  Thus every branch satisfies
		\[
		\min\{\sqrt\dt,d_i/B_{\mathcal C}\}
		\leq s_j^\pm\leq\sqrt\dt.
		\]
		Consequently
		\[
		m_{n,i}\leq\theta_j=s_j^+s_j^-\leq\dt
		\qquad\text{for every }j.
		\]
		Since $\sum_j\rho_j=1$,
		\[
		\frac1\dt
		\leq\sum_{j=1}^M\frac{\rho_j}{\theta_j}
		=\frac1{\tau_{n,i}(a)}
		\leq\frac1{m_{n,i}}.
		\]
		Taking reciprocals proves \eqref{eq:effective-time-lower-bound}.
	\end{proof}
	
	\subsection{From the stopped paths to the HJB update}
	
	The preceding subsections determine the branch locations, durations, and
	weights.  We now attach a continuation value to each branch and assemble the
	discrete operator.
	
	Fix $a\in A$ and consider the auxiliary linear terminal--boundary problem
	\begin{equation}
		\label{eq:fixed-control-linear-problem}
		\begin{cases}
			F^a[v^a]=\partial_t v^a
			+\dfrac12\operatorname{Tr}\!\left(A^aD^2v^a\right)
			+\langle\mu^a,Dv^a\rangle
			-r^av^a+f^a=0,
			&(t,x)\in(0,T)\times O,\\
			v^a(t,x)=u_\partial(t,x),
			&(t,x)\in[0,T)\times\partial O,\\
			v^a(T,x)=\Psi(x),
			&x\in\overline O.
		\end{cases}
	\end{equation}
	We use this linear problem to derive the one-step update for a fixed control.
	
	Let $Y^a$ solve
	\begin{equation}
		\label{eq:SDE}
		dY_s^a
		=\mu(s,Y_s^a,a)\,ds+\sigma(s,Y_s^a,a)\,dW_s,
		\qquad
		Y_{t_n}^a=x_i,
	\end{equation}
	and define
	\[
	\vartheta_\partial^a
	=\inf\{s\geq t_n:Y_s^a\notin O\},
	\qquad
	\vartheta^a=t_{n+1}\wedge\vartheta_\partial^a,
	\qquad
	R_s^a=\int_{t_n}^s r(q,Y_q^a,a)\,dq.
	\]
	For a classical solution of
	\eqref{eq:fixed-control-linear-problem}, the stopped Feynman--Kac formula is
	\begin{equation}
		\label{eq:Feynman-Kac}
		v^a(t_n,x_i)
		=\mathbb E\!\left[
		e^{-R_{\vartheta^a}^a}\mathcal V_{\vartheta^a}^a
		+\int_{t_n}^{\vartheta^a}
		e^{-R_s^a}f(s,Y_s^a,a)\,ds
		\right],
	\end{equation}
	where
	\[
	\mathcal V_{\vartheta^a}^a
	=\begin{cases}
		\Psi(Y_T^a),
		&\vartheta^a=T,\\
		u_\partial(
		\vartheta_\partial^a,
		Y_{\vartheta_\partial^a}^a),
		&\vartheta_\partial^a\leq t_{n+1},\quad \vartheta_\partial^a<T,\\
		v^a(t_{n+1},Y_{t_{n+1}}^a),
		&t_{n+1}<\min\{T,\vartheta_\partial^a\}.
	\end{cases}
	\]
	Thus a branch that survives a nonterminal time step reads the solution
	at the next time level, whereas a stopped branch reads the prescribed
	Dirichlet value at its first contact with $\partial O$.  A nonterminal tie
	\(\vartheta_\partial^a=t_{n+1}<T\) is included in the second line and reads
	\(u_\partial\); a terminal-time tie is resolved by the first line and reads
	\(\Psi\).
	
	To obtain the discrete fixed-control row, freeze the coefficients at
	$(t_n,x_i)$, replace the diffusion paths by
	\eqref{eq:branch-path}, and replace the expectation by the stopped weights
	\eqref{eq:universal-weights}.  Let $\Phi$ denote a grid function on the next
	time layer.  The continuation value on branch $(j,\pm)$ is
	\begin{equation}
		\label{eq:branch-continuation}
		C_{n,i}^{a,j,\pm}[\Phi]
		=\begin{cases}
			\Isp[\Phi](X_{n,i}^{*,a,j,\pm}),
			&\chi_{n,i}^{a,j,\pm}=0,\\
			\Psi\!\left(
			\Gamma_{n,i}^{a,j,\pm}
			(d_{n,i}^{a,j,\pm})
			\right),
			&\chi_{n,i}^{a,j,\pm}=1,\quad
			t_n+d_{n,i}^{a,j,\pm}=T,\\
			u_\partial\!\left(
			t_n+d_{n,i}^{a,j,\pm},
			\Gamma_{n,i}^{a,j,\pm}
			(d_{n,i}^{a,j,\pm})
			\right),
			&\chi_{n,i}^{a,j,\pm}=1,\quad
			t_n+d_{n,i}^{a,j,\pm}<T.
		\end{cases}
	\end{equation}
	Under Assumption~\ref{ass:quantitative-boundary}, \textup{(Q1)}--\textup{(Q3)}
	imply $u_\partial(T,\cdot)=\Psi$ on $\partial O$, so the terminal convention
	agrees with the lateral trace in the rate theorem.
	For a branch of duration $d$, we use
	\[
	e^{-r_i^n(a)d}
	=\frac{1}{1+r_i^n(a)d}+O(d^2),
	\qquad
	\int_0^d e^{-r_i^n(a)s}f_i^n(a)\,ds
	=f_i^n(a)d+O(d^2).
	\]
	The corresponding branch value is therefore
	\begin{equation}
		\label{eq:branch-operator}
		B_{n,i}^{a,j,\pm}[\Phi]
		=
		\frac{C_{n,i}^{a,j,\pm}[\Phi]}
		{1+r_i^n(a)d_{n,i}^{a,j,\pm}}
		+f_i^n(a)d_{n,i}^{a,j,\pm}.
	\end{equation}
	The two signs in pair $j$ are combined according to
	\begin{equation}
		\label{eq:pair-operator}
		P_{n,i}^{a,j}[\Phi]
		=
		\gamma_{n,i}^{a,j,+}
		B_{n,i}^{a,j,+}[\Phi]
		+
		\gamma_{n,i}^{a,j,-}
		B_{n,i}^{a,j,-}[\Phi].
	\end{equation}
	Averaging the pairs gives the fixed-control operator
	\begin{equation}
		\label{eq:Sa-boundary}
		\begin{aligned}
			S_{n,i}^a[\Phi]
			&=\sum_{j=1}^M
			\pi_{n,i}^{a,j}P_{n,i}^{a,j}[\Phi]\\
			&=\sum_{j=1}^M\sum_{\pm}
			\omega_{n,i}^{a,j,\pm}
			B_{n,i}^{a,j,\pm}[\Phi].
		\end{aligned}
	\end{equation}
	When the control set consists of a single element $a$, the interior update is
	\[
	v_{h,i}^{a,n}
	=S_{n,i}^a[v_h^{a,n+1}],
	\]
	which is the stopped scheme associated with
	\eqref{eq:fixed-control-linear-problem}.
	
	We now return to the original HJB terminal--boundary problem in the
	direct fixed-control notation of Section~\ref{sec:bounded}:
	\begin{equation}
		\label{eq:HJB-problem-scheme}
		\begin{cases}
			\displaystyle\inf_{a\in A}F^a[u](t,x)=0,
			&(t,x)\in(0,T)\times O,\\
			u(t,x)=u_\partial(t,x),
			&(t,x)\in[0,T)\times\partial O,\\
			u(T,x)=\Psi(x),
			&x\in\overline O.
		\end{cases}
	\end{equation}
	For every $a\in A$, the operator $S_{n,i}^a$ is the dynamic-programming row
	for the corresponding fixed-control equation.  The control is optimized only after
	the complete cubature average has been formed:
	\begin{equation}
		\label{eq:HJB-row-operator}
		S_{n,i}[\Phi]
		=\inf_{a\in A}S_{n,i}^a[\Phi].
	\end{equation}
	The fully discrete HJB scheme is
	\begin{equation}
		\label{eq:bounded-scheme}
		\begin{cases}
			u_{h,i}^{N_{\dt}}=\Psi(x_i),
			&x_i\in G_h,\\
			u_{h,i}^n=u_\partial(t_n,x_i),
			&x_i\in G_h\cap\partial O,\quad n<N_{\dt},\\
			u_{h,i}^n
			=\displaystyle\inf_{a\in A}
			S_{n,i}^a[u_h^{n+1}],
			&x_i\in G_h\cap O,\quad n<N_{\dt}.
		\end{cases}
	\end{equation}
	The infimum in \eqref{eq:bounded-scheme} is outside the cubature sum:
	one control is selected for the entire row.  By
	\eqref{eq:weight-moments}, the corresponding fixed-control average reproduces
	the drift and diffusion contributions of the generator over the effective time
	$\tau_{n,i}(a)$.

	\section{Monotonicity, stability, and consistency}
	\label{sec:BS}
	
	This section records the properties of the scheme that will be used in the
	error analysis.  Monotonicity and stability follow directly from the
	positivity of the branch weights and the interpolant.  The main point is the
	consistency estimate, for which the dependence of the remainder on the
	derivatives of the test function is stated explicitly.
	
	\subsection{Monotonicity and stability}
	
	\begin{proposition}
		\label{prop:bounded-monotone}
		For every interior node, $S_{n,i}$ is monotone and nonexpansive:
		\[
		\Phi\leq\widetilde\Phi
		\quad\Longrightarrow\quad
		S_{n,i}[\Phi]\leq S_{n,i}[\widetilde\Phi],
		\]
		and
		\[
		\abs{S_{n,i}[\Phi]-S_{n,i}[\widetilde\Phi]}
		\leq
		\norm{\Phi-\widetilde\Phi}_{\ell^\infty(G_h)}.
		\]
		Consequently, \eqref{eq:bounded-scheme} satisfies a discrete comparison
		principle: if grid functions \(U,Z\) satisfy
		\[
		U_i^n\le S_{n,i}[U^{n+1}],\qquad
		Z_i^n\ge S_{n,i}[Z^{n+1}]
		\]
		at every interior node, and \(U\le Z\) on the terminal and lateral
		nodes, then \(U\le Z\) on the whole grid.
	\end{proposition}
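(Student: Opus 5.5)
The argument works first at the level of a fixed control $a$ and then passes to the infimum. For fixed $(n,i,a)$, the durations $d_{n,i}^{a,j,\pm}$, stopping indicators, endpoints, and weights $\omega_{n,i}^{a,j,\pm}$ depend only on the geometry and the frozen coefficients. None of them depends on the grid function $\Phi$. Hence $S_{n,i}^a$ is an affine map of $\Phi$:
\[
S_{n,i}^a[\Phi]=\sum_{j,\pm}\omega_{n,i}^{a,j,\pm}\,\frac{C_{n,i}^{a,j,\pm}[\Phi]}{1+r_i^n(a)d_{n,i}^{a,j,\pm}}+\sum_{j,\pm}\omega_{n,i}^{a,j,\pm}f_i^n(a)d_{n,i}^{a,j,\pm}.
\]
The source term is independent of $\Phi$, and so is the continuation value of every stopped branch, which reads only $\Psi$ or $u_\partial$. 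The only dependence on $\Phi$ is through $\Isp[\Phi](X^{*,a,j,\pm}_{n,i})$ on unstopped branches. This is a convex combination of nodal values, because $\psi_\ell\ge0$ and $\sum_\ell\psi_\ell=1$.

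Monotonicity of $S_{n,i}^a$ then follows from three facts:
\begin{itemize}
\item the weights are strictly positive by Proposition~\ref{prop:weights};
\item the factors $(1+r d)^{-1}$ lie in $(0,1]$ by the normalization \eqref{eq:nonnegative-r-normalization};
\item $\Isp$ is monotone.
\end{itemize}
For nonexpansiveness, subtract the two rows. The source terms and the stopped-branch data cancel, which leaves
\[
\bigl|S^a_{n,i}[\Phi]-S^a_{n,i}[\widetilde\Phi]\bigr|\le\sum_{j,\pm}\omega^{a,j,\pm}_{n,i}(1-\chi^{a,j,\pm}_{n,i})\,\|\Phi-\widetilde\Phi\|_{\ell^\infty}\le\|\Phi-\widetilde\Phi\|_{\ell^\infty},
\]
using $\sum_{j,\pm}\omega=1$.

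Both properties pass to $S_{n,i}=\inf_aS^a_{n,i}$ by the usual argument. If $S^a[\Phi]\le S^a[\widetilde\Phi]$ for every $a$, take the infimum of both sides. For the Lipschitz bound, $S^a[\Phi]\le S^a[\widetilde\Phi]+\|\Phi-\widetilde\Phi\|$ for every $a$; take the infimum and then swap the roles of $\Phi$ and $\widetilde\Phi$. The infimum is finite because each row is bounded by $\|\Phi\|_\infty+\|\Psi\|_\infty+\|u_\partial\|_\infty+K\dt$, uniformly in $a$. I expect no real obstacle here. The one delicate point is checking that nothing $\Phi$-dependent hides in the stopped branches, and \eqref{eq:branch-continuation} settles this.

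For the comparison principle, argue by backward induction on $n$. At $n=N_{\dt}$ the inequality $U\le Z$ is part of the hypothesis on terminal nodes. Suppose $U^{n+1}\le Z^{n+1}$ on all of $G_h$. Boundary nodes at level $n$ satisfy $U^n_i\le Z^n_i$ by hypothesis. At an interior node, monotonicity gives
\[
U^n_i\le S_{n,i}[U^{n+1}]\le S_{n,i}[Z^{n+1}]\le Z^n_i.
\]
This completes the induction. Note that the comparison argument needs only monotonicity, not the strict contraction that $r>0$ would provide.
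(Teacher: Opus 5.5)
Your proposal is correct and follows essentially the same route as the paper. Each fixed-control row is affine in the next-layer values with nonnegative coefficients of total mass at most one (positive weights, discount factors in $(0,1]$ from $r\ge0$, monotone constant-preserving interpolation, and $\Phi$-independent stopped data and source); both properties pass to the infimum over $a$, and backward induction gives the comparison principle. Your remark that the rows are bounded uniformly in $a$, so the infimum is finite, is a small detail the paper leaves implicit.
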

	
	\begin{proof}
By the positivity and constant-preserving property of
\eqref{eq:Isp}, the weight identities
\eqref{eq:weight-moments}, and \(r\ge0\) from
\eqref{eq:nonnegative-r-normalization}, each fixed-control
row \eqref{eq:Sa-boundary} is affine in the next-layer
nodal values, with nonnegative coefficients whose sum is
at most one. Hence it is monotone and nonexpansive.
Taking the infimum over \(a\in A\) preserves both properties,
which proves the first two assertions.

For comparison, if \(U^{n+1}\le Z^{n+1}\), monotonicity gives
\[
U_i^n
\le S_{n,i}[U^{n+1}]
\le S_{n,i}[Z^{n+1}]
\le Z_i^n
\]
at every interior node. Together with the prescribed ordering
on the lateral boundary, backward induction from the terminal
layer yields \(U\le Z\) on the whole grid.
\end{proof}
	
	\begin{proposition}[Uniform stability]
		\label{prop:bounded-stability}
		Let
		\[
		\norm{f}_\infty
		=
		\sup_{(t,x,a)\in[0,T]\times\overline O\times A}
		|f(t,x,a)|.
		\]
		Then
		\[
		\max_{0\leq n\leq N_{\dt}}
		\norm{u_h^n}_{\ell^\infty(G_h)}
		\leq
		\max\{\norm{\Psi}_\infty,\norm{u_\partial}_\infty\}
		+T\norm{f}_\infty.
		\]
		If $\Psi$, $u_\partial$, and $f$ are nonnegative, then $u_h$ is
		nonnegative.
	\end{proposition}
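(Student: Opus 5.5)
We argue by backward induction on the time level, bounding each row through the structure of \eqref{eq:Sa-boundary}. Set
\[
M_0:=\max\{\norm{\Psi}_\infty,\norm{u_\partial}_\infty\},
\qquad
E_n:=\max_{i}\abs{u_{h,i}^n}.
\]
The claim to establish is
\[
E_n\le M_0+(T-t_n)\norm{f}_\infty .
\]
At \(n=N_{\dt}\) this holds because \(u_h^{N_{\dt}}=\Psi|_{G_h}\). At lateral boundary nodes with \(n<N_{\dt}\) we have \(\abs{u_{h,i}^n}\le\norm{u_\partial}_\infty\le M_0\), so only interior nodes need work.

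Fix an interior node \(x_i\), a level \(n<N_{\dt}\) and a control \(a\), and assume the bound at level \(n+1\). First bound the continuation values \eqref{eq:branch-continuation}. For an unstopped branch, \(\Isp\) is a convex combination of nodal values because \(\psi_\ell\ge0\) and \(\sum_\ell\psi_\ell=1\), so \(\abs{C}\le E_{n+1}\). For a stopped branch, \(C\) is a value of \(\Psi\) or of \(u_\partial\), so \(\abs{C}\le M_0\le M_0+(T-t_{n+1})\norm f_\infty\).

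Next bound each branch value. Since \(r\ge0\) by \eqref{eq:nonnegative-r-normalization} and \(0<d\le\dt\), every branch value \eqref{eq:branch-operator} satisfies
\[
\abs{B}\le\abs{C}+\norm f_\infty d\le M_0+(T-t_{n+1})\norm f_\infty+\norm f_\infty\dt .
\]
Note that \(t_{n+1}\) is used here even for stopped branches. This is legitimate: their bound \(M_0\) already lies below the inductive bound at level \(n+1\).

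By Proposition~\ref{prop:weights}, the weights \(\omega_j^\pm\) are positive and sum to one. Hence \(\abs{S_{n,i}^a[u_h^{n+1}]}\) obeys the same bound, which equals \(M_0+(T-t_n)\norm f_\infty\). This bound is uniform in \(a\), so it passes to the infimum in \eqref{eq:bounded-scheme}, and the induction closes. Taking the maximum over \(n\) gives the stated estimate, since \(T-t_n\le T\).

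For nonnegativity, run the same induction with the hypothesis \(u_h^{n+1}\ge0\). If \(\Psi,u_\partial,f\ge0\), every continuation value is nonnegative by positivity of \(\Isp\), so every \(B\ge0\). Each row is then a positive combination of nonnegative quantities, and an infimum of nonnegative numbers is nonnegative. Boundary and terminal nodes are nonnegative by hypothesis.

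The only delicate point is that the infimum over the possibly infinite set \(A\) must be finite. This is settled by the uniform lower bound just derived: every \(S^a_{n,i}\) is bounded below by \(-(M_0+(T-t_n)\norm f_\infty)\), uniformly in \(a\). All other steps are direct.
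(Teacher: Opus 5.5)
Your proof is correct and follows essentially the paper's route: backward induction over the time layers, using $r\ge0$, the positivity and unit total mass of the stopped weights, and the convexity of $\Isp$, with boundary and terminal readings bounded by $\max\{\norm{\Psi}_\infty,\norm{u_\partial}_\infty\}$. The only difference is bookkeeping: you bound each branch's source term by $\norm{f}_\infty\dt$ before averaging, whereas the paper uses $\sum_{j,\pm}\omega_j^\pm d_j^\pm=\tau\le\dt$, and this does not affect the argument.
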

	
	\begin{proof}
		From \eqref{eq:Sa-boundary} and
		$\sum_{j,\pm}\omega_{n,i}^{a,j,\pm}d_{n,i}^{a,j,\pm}
		=\tau_{n,i}(a)\leq\dt$,
		\[
		\norm{u_h^n}_{\ell^\infty(G_h)}
		\leq
		\max\!\left\{
		\norm{u_h^{n+1}}_{\ell^\infty(G_h)},
		\norm{u_\partial}_\infty
		\right\}
		+\dt\norm{f}_\infty.
		\]
		Iteration from $u_h^{N_{\dt}}=\Psi$ proves the bound.  Positivity follows
		directly from the nonnegative coefficients of the scheme.
	\end{proof}
	
	\subsection{Consistency}
	
	For each fixed control, we estimate every remainder with a factor
	$\tau_{n,i}(a)$.  Dividing by this control-dependent effective time
	then gives a uniform consistency bound, even near the boundary.
	
	Recall the fixed-control expression $F^a$ defined in
	Section~\ref{sec:bounded}.  The HJB
	equation is \(\inf_{a\in A}F^a[u]=0\).
	
	For the consistency calculation, the test function is used on every branch,
	including a branch that meets the boundary.  Set
	\begin{equation}
		\label{eq:test-branch-value}
		\mathcal V_{n,i}^{a,j,\pm}[\varphi]
		=
		\begin{cases}
			\Isp[\varphi(t_{n+1},\cdot)|_{G_h}]
			(X_{n,i}^{*,a,j,\pm}),
			&\chi_{n,i}^{a,j,\pm}=0,\\[1mm]
			\varphi\!\left(
			t_n+d_{n,i}^{a,j,\pm},
			\Gamma_{n,i}^{a,j,\pm}
			(d_{n,i}^{a,j,\pm})
			\right),
			&\chi_{n,i}^{a,j,\pm}=1.
		\end{cases}
	\end{equation}
	The fixed-control test operator is
	\begin{equation}
		\label{eq:test-operator}
		\widehat S_{n,i}^a[\varphi]
		=
		\sum_{j=1}^M\sum_{\pm}
		\omega_{n,i}^{a,j,\pm}
		\left[
		\frac{\mathcal V_{n,i}^{a,j,\pm}[\varphi]}
		{1+r_i^n(a)d_{n,i}^{a,j,\pm}}
		+
		f_i^n(a)d_{n,i}^{a,j,\pm}
		\right].
	\end{equation}
	On a stopped branch, \(\widehat S_{n,i}^a[\varphi]\) uses the test-function
	value at first contact, while \(S_{n,i}^a\) uses the prescribed boundary
	datum.  The following test-row expansion therefore coincides with the
	scheme-row expansion whenever no branch stops or the test function agrees
	with the boundary datum at every stopped contact.
	
	\begin{proposition}[Test-row consistency]
		\label{prop:stopped-consistency}
		Let $\varphi$ be smooth on $Q_T^{\mathrm{cl}}$ and $0<\dt\leq1$.
		Then, uniformly in $a\in A$ and in the interior nodes,
		\begin{equation}
			\label{eq:stopped-consistency}
			\widehat S_{n,i}^a[\varphi]-\varphi(t_n,x_i)
			=
			\tau_{n,i}(a)F^a[\varphi](t_n,x_i)
			+
			R_{n,i}^a[\varphi],
		\end{equation}
		where
		\begin{align}
			\abs{R_{n,i}^a[\varphi]}
			\leq C\tau_{n,i}(a)\Bigg[
			&\sqrt{\dt}\left(
			\norm{D\varphi}_\infty
			+\norm{D^2\varphi}_\infty
			+\norm{D^3\varphi}_\infty
			+\norm{D\partial_t\varphi}_\infty
			\right)
			\notag\\
			&+\dt\left(
			\norm{\varphi}_\infty
			+\norm{\partial_t\varphi}_\infty
			+\norm{D^2\varphi}_\infty
			+\norm{\partial_{tt}\varphi}_\infty
			\right)
			+\frac{h^2}{\dt}
			\left(\norm{D\varphi}_\infty+\norm{D^2\varphi}_\infty\right)
			\Bigg].
			\label{eq:stopped-remainder}
		\end{align}
		The constant $C$ is independent of
		$a,n,i,h,\dt$, and $\varphi$.
		
		Consequently,
		\begin{equation}
			\label{eq:HJB-consistency}
			\inf_{a\in A}
			\frac{
				\widehat S_{n,i}^a[\varphi]-\varphi(t_n,x_i)}
			{\tau_{n,i}(a)}
			=
			\inf_{a\in A}F^a[\varphi](t_n,x_i)
			+
			O_\varphi\!\left(
			\sqrt{\dt}+\frac{h^2}{\dt}
			\right).
		\end{equation}
	\end{proposition}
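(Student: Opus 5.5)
The plan is to expand each branch value to second order around $(t_n,x_i)$, sum against the stopped weights using the moment identities of Proposition~\ref{prop:weights}, and check that every leftover term carries a factor $\tau_{n,i}(a)$.

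\textbf{Step 1: replace interpolated values by exact values.} On unstopped branches ($\chi=0$), replace $\Isp[\varphi(t_{n+1},\cdot)|_{G_h}](X^*)$ by $\varphi(t_{n+1},X^*)$. By \eqref{eq:Isp-error}, each such branch costs at most $C_Ih^2(\norm{D\varphi}_\infty+\norm{D^2\varphi}_\infty)$. This step needs the unstopped endpoints to belong to $\mathcal X_h$, which holds by definition of the requested points. The total weight $W_0$ of unstopped branches must be bounded by a multiple of $\tau/\dt$. Every unstopped branch has $d=\dt$, so the last identity in \eqref{eq:weight-moments} gives
\[
W_0\,\dt\le\sum_{j,\pm}\omega_j^\pm d_j^\pm=\tau .
\]
Hence the interpolation error is at most $C\tau (h^2/\dt)(\norm{D\varphi}_\infty+\norm{D^2\varphi}_\infty)$, which is the $h^2/\dt$ term in \eqref{eq:stopped-remainder}. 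After this step every branch reads the exact value $\varphi(t_n+d,\Gamma(d))$, where $d=d^{a,j,\pm}_{n,i}$.

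\textbf{Step 2: Taylor-expand each exact branch value.} Write $s=\sqrt d$, $\Gamma(d)-x_i=d\mu\pm s\sigma\eta_j$, and $|\Gamma(d)-x_i|\le Bs$ with $B=B_{\mathcal C}$. Expanding around $(t_n,x_i)$ gives
\[
\varphi(t_n+d,\Gamma(d))=\varphi+d\,\partial_t\varphi+\langle D\varphi,d\mu\pm s\sigma\eta_j\rangle+\tfrac12 s^2\langle D^2\varphi\,\sigma\eta_j,\sigma\eta_j\rangle+E .
\]
The remainder $E$ collects the following terms:
\begin{itemize}
\item the $D^2\varphi$ cross and drift-squared terms, of size $d\,s+d^2$;
\item the third-order spatial term, of size $s^3\norm{D^3\varphi}$;
\item the mixed term, of size $d\,s\norm{D\partial_t\varphi}$;
\item the time term, of size $d^2\norm{\partial_{tt}\varphi}$.
\end{itemize}
All of these are $O(d(\sqrt\dt+\dt))$. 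The discount factor is handled by writing $1/(1+rd)=1-rd+O(d^2)$, which produces $-rd\varphi+O(d^2\norm{\varphi}_\infty+d^2\norm{\partial_t\varphi}_\infty+\dots)$. The source term contributes exactly $fd$.

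\textbf{Step 3: sum against the weights.} Sum with weights $\omega_j^\pm$ and apply \eqref{eq:weight-moments} term by term:
\begin{itemize}
\item the zeroth moment is $1$, so the $\varphi$ terms reproduce $\varphi(t_n,x_i)$;
\item $\sum\omega d=\tau$ turns the time derivative, drift, discount, and source terms into $\tau(\partial_t\varphi+\langle\mu,D\varphi\rangle-r\varphi+f)$;
\item the signed first-moment identity $\sum(\omega^+s^+-\omega^-s^-)\eta_j=0$ cancels the $\pm s\sigma\eta_j$ terms exactly;
\item the second-moment identity, after contracting with $\sigma^\top D^2\varphi\,\sigma$, gives $\tfrac12\tau\tr(AD^2\varphi)$.
\end{itemize}
Together these give $\tau F^a[\varphi](t_n,x_i)$. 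Every remainder is bounded by $C\sum\omega d(\sqrt\dt+\dt)=C\tau(\sqrt\dt+\dt)$ times the indicated norms, because $d\le\dt\le1$ and $s^3=d\,s\le d\sqrt\dt$.

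This is the main point, and I expect it to be the delicate step: every error must carry a factor $d$ rather than merely $s$. Odd-order terms of order $s$ cancel only through the pair balancing. All other terms are at least $O(d\,s)$ and are controlled by $\sum\omega d=\tau$.

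\textbf{Step 4: conclude \eqref{eq:HJB-consistency}.} Divide \eqref{eq:stopped-consistency} by $\tau_{n,i}(a)>0$, which is positive by Lemma~\ref{lem:effective-time-lower-bound}. The resulting remainder bound is uniform in $a$, and an infimum of functions that agree up to a uniform error differs by at most that error. Also $\dt\le\sqrt\dt$. Together these give \eqref{eq:HJB-consistency}.
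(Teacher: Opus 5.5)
Your proposal is correct and follows essentially the same route as the paper. Both arguments use a branchwise second-order expansion and treat the discount through $1/(1+rd)=1-rd+O(d^2)$ together with a mean-value bound on $\varphi(t_n+d,\Gamma(d))-\varphi(t_n,x_i)$. Both cancel the $\pm s\sigma\eta_j$ terms exactly by pair balancing, use the second-moment identity for the diffusion term, and estimate $\sum_{j,\pm}\omega_j^\pm (s_j^\pm)^k\le\tau\,\dt^{(k-2)/2}$ for $k=3,4$.

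The only genuinely different step is the bound on the interpolated mass. The paper uses the explicit formula $\omega_j^\pm=\tau\rho_j/\bigl(s_j^\pm(s_j^++s_j^-)\bigr)\le\tau\rho_j/\dt$ and sums to $2\tau/\dt$. You instead read $\dt\sum_{\chi_j^\pm=0}\omega_j^\pm\le\tau$ directly off the last identity in \eqref{eq:weight-moments}. Your version is shorter, gives the sharper constant $\tau/\dt$, and uses nothing about the weights beyond their second moment.

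One point should be made explicit in your Step~2. $O$ is not assumed convex, and $\varphi$, together with the norms in \eqref{eq:stopped-remainder}, is only available on $Q_T^{\mathrm{cl}}$. So the expansion should not be taken along the straight chord from $x_i$ to $\Gamma(d)$, which may leave $\overline O$. Instead, as in Lemma~\ref{lem:bc-app-branch-remainder}, expand in time at the endpoint and in space along the branch curve $r\mapsto x_i+rz_j^\pm+r^2\mu$, $0\le r\le s$. This curve stays in $\overline O$ up to first contact. It produces exactly the remainder terms you list, so your bounds are unaffected.
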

	
	\begin{proof}
Fix \((n,i,a)\) and suppress these indices. Write
\[
\mu=\mu_i^n(a),\qquad \sigma=\sigma_i^n(a),\qquad
r=r_i^n(a),\qquad f=f_i^n(a),\qquad
\varphi_i^n=\varphi(t_n,x_i).
\]
Derivatives carrying the indices \(i,n\) are evaluated at \((t_n,x_i)\).

\smallskip
\noindent\emph{One-branch expansion.}
On branch \((j,\pm)\), set
\[
s=s_j^\pm=\sqrt{d_j^\pm},\qquad
z_j^\pm=\pm\sigma\eta_j,\qquad
y=sz_j^\pm+s^2\mu.
\]
The endpoint is \((t_n+s^2,x_i+y)\), and \(|y|\le Cs\)
because the coefficients and cubature points are bounded and \(s\le1\).
The curve
\(r\mapsto x_i+r z_j^\pm+r^2\mu\), \(0\le r\le s\), is the
stopped branch itself and hence remains in \(\overline O\).
Applying the branchwise Taylor formula of
Lemma~\ref{lem:bc-app-branch-remainder} with \(\gamma=1\), and absorbing
its explicit terms of orders \(s^3\) and \(s^4\) into the remainder, gives
\begin{align}
\varphi(t_n+s^2,x_i+y)
={}&\varphi_i^n+s\langle D\varphi_i^n,z_j^\pm\rangle
 +s^2\bigl(\partial_t\varphi_i^n
       +\langle\mu,D\varphi_i^n\rangle\bigr)\notag\\
&+\tfrac12s^2\langle D^2\varphi_i^n z_j^\pm,z_j^\pm\rangle
 +\mathcal E_j^\pm,
\label{eq:branch-taylor}
\end{align}
with
\begin{align}
|\mathcal E_j^\pm|\le C\Bigl(
&s^3\bigl(\|D^2\varphi\|_\infty+\|D^3\varphi\|_\infty
               +\|D\partial_t\varphi\|_\infty\bigr)\notag\\
&+s^4\bigl(\|D^2\varphi\|_\infty
               +\|\partial_{tt}\varphi\|_\infty\bigr)\Bigr).
\label{eq:branch-taylor-remainder}
\end{align}

For the discount, put \(q=(1+rs^2)^{-1}\). The identity
\[
q=1-rs^2+\frac{r^2s^4}{1+rs^2}
\]
and the mean-value estimate
\[
|\varphi(t_n+s^2,x_i+y)-\varphi_i^n|
\le C\bigl(s\|D\varphi\|_\infty
           +s^2\|\partial_t\varphi\|_\infty\bigr)
\]
give
\begin{align*}
&|q\varphi(t_n+s^2,x_i+y)-\varphi(t_n+s^2,x_i+y)
       +rs^2\varphi_i^n|\\
&\qquad\le C\Bigl(s^3\|D\varphi\|_\infty
       +s^4\bigl(\|\varphi\|_\infty
                   +\|\partial_t\varphi\|_\infty\bigr)\Bigr).
\end{align*}
Combining this bound with
\eqref{eq:branch-taylor}--\eqref{eq:branch-taylor-remainder}
and adding \(fs^2\), we find
\begin{align}
q\varphi(t_n+s^2,x_i+y)+fs^2
={}&\varphi_i^n+s\langle D\varphi_i^n,z_j^\pm\rangle\notag\\
&+s^2\bigl(\partial_t\varphi_i^n
 +\langle\mu,D\varphi_i^n\rangle-r\varphi_i^n+f\bigr)\notag\\
&+\tfrac12s^2\langle D^2\varphi_i^n z_j^\pm,z_j^\pm\rangle
 +\mathcal B_j^\pm,
\label{eq:discounted-branch-expansion}
\end{align}
where
\begin{align}
|\mathcal B_j^\pm|\le C\Bigl(
&s^3\bigl(\|D\varphi\|_\infty+\|D^2\varphi\|_\infty
 +\|D^3\varphi\|_\infty+\|D\partial_t\varphi\|_\infty\bigr)
 \notag\\
&+s^4\bigl(\|\varphi\|_\infty+\|\partial_t\varphi\|_\infty
 +\|D^2\varphi\|_\infty+\|\partial_{tt}\varphi\|_\infty\bigr)
\Bigr).
\label{eq:discounted-branch-remainder}
\end{align}

\smallskip
\noindent\emph{Weighted average.}
Multiply \eqref{eq:discounted-branch-expansion} by \(\omega_j^\pm\)
and sum over all branches. In the present notation,
\eqref{eq:weight-moments} gives
\[
\sum_{j,\pm}\omega_j^\pm=1,\qquad
\sum_{j,\pm}\omega_j^\pm(s_j^\pm)^2=\tau,
\]
\[
\sum_{j,\pm}\omega_j^\pm s_j^\pm z_j^\pm=0,\qquad
\sum_{j,\pm}\omega_j^\pm(s_j^\pm)^2
 z_j^\pm(z_j^\pm)^\top=\tau\sigma\sigma^\top.
\]
Thus the constant term gives \(\varphi_i^n\), the linear noise term
cancels, and the quadratic noise term satisfies
\[
\frac12\sum_{j,\pm}\omega_j^\pm(s_j^\pm)^2
 \langle D^2\varphi_i^n z_j^\pm,z_j^\pm\rangle
=\frac\tau2\operatorname{Tr}(\sigma\sigma^\top D^2\varphi_i^n).
\]
Together with the time, drift, discount, and source terms, this gives
the weighted principal part
\(\varphi_i^n+\tau F^a[\varphi](t_n,x_i)\).

For the remainders, \(s_j^\pm\le\sqrt\dt\) implies
\[
\sum_{j,\pm}\omega_j^\pm(s_j^\pm)^3
\le\sqrt\dt\sum_{j,\pm}\omega_j^\pm(s_j^\pm)^2
=\tau\sqrt\dt,\qquad
\sum_{j,\pm}\omega_j^\pm(s_j^\pm)^4\le\tau\dt.
\]
Consequently, \eqref{eq:discounted-branch-remainder} yields
\begin{align}
\left|\sum_{j,\pm}\omega_j^\pm\mathcal B_j^\pm\right|
\le C\tau\Bigl(
&\sqrt\dt\bigl(\|D\varphi\|_\infty+\|D^2\varphi\|_\infty
 +\|D^3\varphi\|_\infty+\|D\partial_t\varphi\|_\infty\bigr)
 \notag\\
&+\dt\bigl(\|\varphi\|_\infty+\|\partial_t\varphi\|_\infty
 +\|D^2\varphi\|_\infty+\|\partial_{tt}\varphi\|_\infty\bigr)
\Bigr).
\label{eq:summed-branch-remainder}
\end{align}

\smallskip
\noindent\emph{Interpolation.}
In \eqref{eq:test-branch-value}, stopped branches use the test function
at the contact point. Only unstopped branches require interpolation,
and on these branches \(s_j^\pm=\sqrt\dt\).
Equation~\eqref{eq:universal-weights} therefore gives
\begin{equation}
\omega_j^\pm
=\frac{\tau\rho_j}{s_j^\pm(s_j^++s_j^-)}
\le\frac{\tau\rho_j}{\dt},\qquad
\sum_{\chi_j^\pm=0}\omega_j^\pm\le\frac{2\tau}{\dt}.
\label{eq:interpolated-branch-mass}
\end{equation}
The second bound follows because each pair contains at most two
unstopped branches and \(\sum_j\rho_j=1\).
Write \(X_j^\pm=X_{n,i}^{*,a,j,\pm}\) and
\(q_j^\pm=(1+r(s_j^\pm)^2)^{-1}\). The interpolation contribution is
\[
E_{\rm int}
=\sum_{\substack{j,\pm\\\chi_j^\pm=0}}
\omega_j^\pm q_j^\pm
\Bigl(
\Isp[\varphi(t_{n+1},\cdot)|_{G_h}](X_j^\pm)
-\varphi(t_{n+1},X_j^\pm)
\Bigr).
\]
Using \(q_j^\pm\le1\), \eqref{eq:interpolated-branch-mass}, and
\eqref{eq:Isp-error}, including the displacement through \(p_h\),
we obtain
\[
|E_{\rm int}|\le C\tau\frac{h^2}{\dt}
\bigl(\|D\varphi\|_\infty+\|D^2\varphi\|_\infty\bigr).
\]
Thus the total remainder is
\[
R_{n,i}^a[\varphi]
=\sum_{j,\pm}\omega_j^\pm\mathcal B_j^\pm+E_{\rm int}.
\]
Combining the last bound with \eqref{eq:summed-branch-remainder}
proves \eqref{eq:stopped-consistency}--\eqref{eq:stopped-remainder}.
Finally, divide by \(\tau=\tau_{n,i}(a)\). Since the remainder bound
is uniform in \(a\) and \(\dt\le\sqrt\dt\), taking the control
infimum gives \eqref{eq:HJB-consistency}.
\end{proof}

	\section{Quantitative error bounds}
\label{sec:quantitative}

The error estimate combines three bounds: interior consistency for smooth
comparison functions, discrete barriers near the lateral boundary, and
Lemma~\ref{lem:discrete-terminal-modulus} near \(T\).
Localized comparison then gives the two one-sided errors, using scalar
and switching regularizations, respectively.
The auxiliary PDE estimates come from \cite{PicarelliReisingerRotaetxe2020}:
under \(s=T-t\), \(V(s,x)=u(T-s,x)\) solves
\(\partial_sV+H(T-s,x,V,DV,D^2V)=0\), with initial datum \(\Psi\)
and coefficient tuple \((A^a/2,\mu^a,-r^a,f^a)(T-s,x)\).

Shaking requires coefficient values outside the physical cylinder.
For a scalar component \(g_\ell\) of
\(g\in\{\sigma,\mu,r,f\}\), use McShane's extension
\cite{McShane1934} with the parabolic distance \(\varrho\) from
\eqref{eq:parabolic-distance}:
\begin{equation}
(Eg_\ell)(z,a)=\inf_{z'\in[0,T]\times\overline O}
 \{g_\ell(z',a)+K\varrho(z,z')\},
\qquad z\in\mathbb R\times\mathbb R^N.
\label{eq:quant-McShane-extension}
\end{equation}
The triangle inequality and Assumption~\ref{ass:bounded} give
\begin{equation}
Eg_\ell=g_\ell\ \text{on }[0,T]\times\overline O,
\qquad |Eg_\ell(z,a)-Eg_\ell(w,a)|\le K\varrho(z,w).
\label{eq:McShane-basic-properties}
\end{equation}
For the discount, replace its scalar extension \(\widetilde{Er}\) by
\begin{equation}
Er=\max\{0,\widetilde{Er}\}.
\label{eq:nonnegative-r-extension}
\end{equation}
The positive-part map preserves the Lipschitz bound and agrees with
\(r\ge0\) on the physical cylinder.

The control modulus is
\begin{equation}
\omega_A(\delta)=\max_{g\in\{\sigma,\mu,r,f\}}
 \sup_{\substack{(t,x)\in[0,T]\times\overline O,\ a,b\in A\\
 d_A(a,b)\le\delta}}|g(t,x,a)-g(t,x,b)|.
\label{eq:quant-control-modulus}
\end{equation}
Uniform parabolic Lipschitz continuity, control continuity, and
compactness imply \(\omega_A(\delta)\to0\).  Since the distance term
in \eqref{eq:quant-McShane-extension} is independent of the control,
the extensions satisfy
\begin{equation}
|g(z,a)-g(z,b)|\le C\omega_A(d_A(a,b)).
\label{eq:extended-control-modulus}
\end{equation}
Here and below the extended coefficients retain their original symbols,
and \(A^a=\sigma^a(\sigma^a)^\top\).  All evaluations occur in a fixed
bounded collar, on which the extensions have uniform bounds.

\begin{assumption}[Boundary assumptions for the error estimate]
		\label{ass:quantitative-boundary}
		In addition to Assumption~\ref{ass:bounded}, suppose that the following
		conditions hold.
		\begin{enumerate}[leftmargin=*]
			\item[\textup{(Q1)}]
			There are \(0<\gamma\le1\) and
			\[
			G,\zeta
			\in
			C^{2,\gamma}([0,T]\times\overline O)
			\]
			such that
			\[
			G=u_\partial
			\quad\hbox{on }[0,T]\times\partial O,
			\qquad
			\zeta=0
			\quad\hbox{on }[0,T]\times\partial O.
			\]
			Moreover,
			\[
			\zeta(t,x)>0,
			\qquad
			(t,x)\in[0,T)\times O,
			\]
			and
			\[
			\zeta(T,x)\ge0,
			\qquad
			x\in\overline O.
			\]
			
			\item[\textup{(Q2)}]
			There is \(\beta>0\) such that, for every \(a\in A\),
			\begin{equation}
				\partial_t\zeta
				+\frac12\tr(A^aD^2\zeta)
				+\mu^a\cdot D\zeta-r^a\zeta
				\le-\beta
				\quad\hbox{on }[0,T]\times O.
				\label{eq:quantitative-barrier-inequality}
			\end{equation}
			
			\item[\textup{(Q3)}]
			There is \(C_{\rm comp}>0\) such that
			\begin{equation}
				|\Psi(x)-G(T,x)|
				\le
				C_{\rm comp}\zeta(T,x),
				\qquad
				x\in\overline O.
				\label{eq:quantitative-terminal-compatibility}
			\end{equation}
		\end{enumerate}
	\end{assumption}
	
	Conditions \textup{(Q1)}--\textup{(Q3)} imply
\(u_\partial(T)=G(T)=\Psi\) on \(\partial O\).
After time reversal, \(\zeta/\beta\) and \(G\) satisfy the barrier
and lifting assumptions (A2)--(A3) of
\cite{PicarelliReisingerRotaetxe2020}; \textup{(Q2)} becomes its
inequality (3.3), and \textup{(Q3)} becomes its compatibility
condition (3.6).  The \(C^{2,\gamma}\) bounds supply its assumption (A5).

For the auxiliary boundary data before \(t=0\), define
\begin{equation}
\widehat G(t,x)=
\begin{cases}
3G(-t,x)-2G(-2t,x),&-T/2\le t<0,\\
G(t,x),&0\le t\le T.
\end{cases}
\label{eq:quantitative-G-extension}
\end{equation}
At \(t=0\), the values and spatial derivatives agree because
\(3-2=1\), and the time derivatives agree because \(-3+4=1\).
The fixed time rescalings preserve the H\"older bounds, giving
\begin{equation}
\|\widehat G\|_{2,\gamma;[-T/2,T]\times\overline O}
\le C\|G\|_{2,\gamma;[0,T]\times\overline O}.
\label{eq:quantitative-G-extension-bound}
\end{equation}
For \(\varepsilon_0\le\min\{1,\sqrt T/2\}\), this defines the data
on \([-2\varepsilon^2,T]\) for every \(\varepsilon\le\varepsilon_0\),
while
\begin{equation}
\widehat G=G\quad\text{on }[0,T]\times\overline O.
\label{eq:quantitative-extension-agreement}
\end{equation}
Lemma~\ref{lem:extended-cylinder-barrier} supplies a strict barrier on
the extended cylinders, with constants independent of \(\varepsilon\).

\begin{lemma}[Equivalence of weak and strong boundary conditions]
		\label{lem:relaxed-is-strong}
		Under Assumptions~\ref{ass:bounded} and
		\ref{ass:quantitative-boundary}, every bounded weak Dirichlet viscosity
		subsolution is a strong subsolution, and every bounded weak Dirichlet
		viscosity supersolution is a strong supersolution.  Conversely, every
		strong subsolution or supersolution is weak.
		
		More precisely, set
		\[
		C_G:=\sup_{(t,x,a)\in[0,T]\times\overline O\times A}
		|F^a[G](t,x)|
		\]
		and choose
		\[
		K\ge\max\left\{C_{\rm comp},\frac{C_G+1}{\beta}\right\}.
		\]
		Then every bounded weak subsolution \(v\) and every bounded weak
		supersolution \(w\) satisfy
		\begin{equation}
			v\le G+K\zeta,
			\qquad
			w\ge G-K\zeta
			\quad\hbox{on }[0,T]\times\overline O.
			\label{eq:relaxed-strong-sandwich}
		\end{equation}
		Since \textup{(Q1)} gives \(G=u_\partial\) and \(\zeta=0\) on the lateral
		boundary, these bounds reduce there to \(v\le u_\partial\) and
		\(w\ge u_\partial\).  These are exactly the pointwise boundary inequalities
		missing from the weak definition.  Hence the weak and strong subsolution
		classes, supersolution classes, and continuous solution classes coincide.
	\end{lemma}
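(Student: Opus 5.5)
The plan is to prove \eqref{eq:relaxed-strong-sandwich} by comparing a weak subsolution \(v\) with the smooth function \(\Phi:=G+K\zeta\) directly. The argument is a maximum argument in which the strict barrier inequality \textup{(Q2)} excludes interior maxima and also excludes the relaxed branch of the lateral condition. The supersolution case is symmetric: we use \(G-K\zeta\), minima, and \eqref{eq:viscosity-lateral-super}. The converse inclusion is immediate. If \(v\le u_\partial\) on the lateral boundary, then at a lateral upper contact the second entry in \eqref{eq:viscosity-lateral} is nonnegative, so the maximum is nonnegative. The interior and terminal conditions are the same in both definitions. Once the sandwich is proved, evaluating it on \([0,T)\times\partial O\) with \(G=u_\partial\) and \(\zeta=0\) gives the missing pointwise inequalities, and the equality of the classes follows.

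\emph{Residual of \(\Phi\).} Linearity of \(F^a\) in \(\varphi\), apart from \(f^a\), gives for every \(a\)
\[
F^a[\Phi]=F^a[G]+K\bigl(\partial_t\zeta+\tfrac12\tr(A^aD^2\zeta)+\mu^a\cdot D\zeta-r^a\zeta\bigr)\le C_G-K\beta\le-1 .
\]
Hence \(\inf_aF^a[\Phi+c]\le -1-r^ac\le -1\) for every constant \(c\ge0\), using \(r\ge0\). At the terminal time, \textup{(Q3)} and \(K\ge C_{\rm comp}\) give \(v(T,\cdot)\le\Psi\le G(T,\cdot)+K\zeta(T,\cdot)\).

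\emph{Maximum argument.} Suppose \(M:=\sup(v-\Phi)>0\). Since \(v\) is upper semicontinuous and bounded and \(Q_T^{\mathrm{cl}}\) is compact, the supremum is attained at some \(\hat z=(\hat t,\hat x)\). By the terminal step, \(\hat t<T\). Then \(v-(\Phi+M)\) has a maximum \(0\) at \(\hat z\), so \(\varphi=\Phi+M\) is an admissible test function. If \(\hat x\in O\), the interior condition gives \(\inf_aF^a[\varphi](\hat z)\ge0\), contradicting the residual bound. If \(\hat x\in\partial O\), then \(v(\hat z)=\Phi(\hat z)+M=u_\partial(\hat z)+M>u_\partial(\hat z)\), so \(u_\partial-v<0\), and the residual bound again makes \eqref{eq:viscosity-lateral} fail. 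Hence \(M\le0\).

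\emph{Main obstacle.} The delicate points are the case \(\hat t=0\), where the time derivative is one-sided, and the regularity of \(\Phi\) needed to use it as a test function. The definitions explicitly allow one-sided contacts at \(t=0\). The space \(C^{2,\gamma}\) gives \(\Phi\in C^{1,2}(Q_T^{\mathrm{cl}})\), so both points are covered by the convention stated before Definition~\ref{def:strong-boundary}. A second concern is that the strict gap \(-1\) must dominate the zero-order term \(r^aM\). This is where the normalization \eqref{eq:nonnegative-r-normalization} is needed: it fixes the sign of \(-r^aM\), so no perturbation \(\Phi+\delta(T-t)\) is required.
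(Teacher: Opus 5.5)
Your proposal is correct and is essentially the paper's proof. It uses the same barriers $G\pm K\zeta$ with residual at most $-1$ (resp.\ at least $1$), uses $r\ge0$ to absorb the constant shift by the positive maximum, invokes \textup{(Q3)} to rule out terminal maxima, and notes that at a lateral maximum both entries of the weak max/min condition have the wrong sign; your remarks on one-sided contacts at $t=0$ and on $C^{2,\gamma}\subset C^{1,2}$, like the use of \textup{(Q2)} at lateral points (obtained from \textup{(Q2)} on $[0,T]\times O$ by continuity), only make explicit what the paper leaves implicit.
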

	
	\begin{proof}
Set \(\Phi_+=G+K\zeta\) and \(\Phi_-=G-K\zeta\).  By
\textup{(Q2)} and the choice of \(K\), for every control,
\begin{equation}
F^a[\Phi_+]\le C_G-K\beta\le-1,
\qquad F^a[\Phi_-]\ge-C_G+K\beta\ge1.
\label{eq:relaxed-strong-strict-barriers}
\end{equation}
Also, \textup{(Q3)} gives \(\Phi_-(T)\le\Psi\le\Phi_+(T)\), while
\(\Phi_-=\Phi_+=u_\partial\) on the lateral boundary.

Suppose that a bounded upper semicontinuous weak subsolution \(v\)
satisfies \(m:=\max(v-\Phi_+)>0\).  At a maximizing point,
\(\Phi_++m\) touches \(v\) from above and, since \(r^a\ge0\),
\(F^a[\Phi_++m]\le-1\).  An interior maximum contradicts the
subsolution inequality \(\inf_aF^a\ge0\).  At a lateral maximum,
\(u_\partial-v=-m<0\), so both entries in the weak maximum condition
\eqref{eq:viscosity-lateral} are negative.  A terminal maximum, including at
a corner, is excluded by \eqref{eq:viscosity-weak-terminal} and
\textup{(Q3)}: \(v(T)\le\Psi\le\Phi_+(T)\) on \(\overline O\).
Hence \(v\le\Phi_+\).

For a bounded lower semicontinuous weak supersolution \(w\), a positive
maximum \(m:=\max(\Phi_--w)\) supplies the lower test function
\(\Phi_--m\), with \(F^a[\Phi_--m]\ge1\) for every control.
This contradicts the interior inequality \(\inf_aF^a\le0\), or the
weak lateral minimum condition \eqref{eq:viscosity-lateral-super}, whose
other entry is \(u_\partial-w=m>0\).  The terminal ordering again
excludes a terminal maximum.  Thus \(w\ge\Phi_-\).

Restricting these bounds to the lateral boundary proves the strong
inequalities.  Conversely, the strong boundary inequalities imply the
weak max/min conditions directly.
\end{proof}
	
	\begin{remark}[Examples of a common barrier]
\label{rem:quantitative-barrier-existence}
For time-independent coefficients in dimension \(N\ge2\), the
uniformly elliptic construction in \cite[Theorem~3.2]{Safonov1994}
gives, under its domain and coefficient hypotheses,
\(\zeta\in C^{2,\gamma_b}(\overline O)\) solving
\begin{equation}
\sup_{a\in A}\{\tfrac12\tr(A^aD^2\zeta)+\mu^a\cdot D\zeta-r^a\zeta\}
=-1\quad\text{in }O,\qquad \zeta=0\quad\text{on }\partial O.
\label{eq:stationary-barrier-problem}
\end{equation}
The maximum principle gives \(\zeta>0\), so \textup{(Q2)} holds
for every control with \(\beta=1\).  Degenerate examples are discussed
in \cite[Example~2.3]{DongKrylov2007}.  In
Appendix~\ref{app:numerical-mesh}, the explicit disk barrier
\(1-|x|^2\) verifies \textup{(Q2)} for controlled drift and rank-one
diffusion.
\end{remark}

Throughout the remainder of this section, \(C\) denotes a constant that may
	depend on the fixed data in Assumptions~\ref{ass:bounded} and
	\ref{ass:quantitative-boundary}, on the domain, and on the fixed cubature,
	but not on \(h,\dt,n,i,a\), or any regularization parameter.
	
	\subsection{Regularity of the original and auxiliary problems}
	\label{subsec:quantitative-regularity}
	
	Throughout this section, the solution \(u\) solves
	\begin{equation}
		\begin{cases}
			\displaystyle
			\inf_{a\in A}F^a[u](t,x)=0,
			&(t,x)\in Q_T,\\
			u(t,x)=u_\partial(t,x),
			&(t,x)\in[0,T)\times\partial O,\\
			u(T,x)=\Psi(x),
			&x\in\overline O.
		\end{cases}
		\label{eq:quantitative-physical-problem}
	\end{equation}
The change of variables is \(s=T-t\), with
\(U(s,x)=u(T-s,x)\).  In the forward convention of
\cite{PicarelliReisingerRotaetxe2020}, the spatial operator is
\[
\mathcal L^a(s,x,z,p,X)
=-\tfrac12\tr\bigl(A^a(T-s,x)X\bigr)
 -\mu^a(T-s,x)\cdot p+r^a(T-s,x)z-f^a(T-s,x).
\]
Thus \(U_s+\sup_a\mathcal L^a(s,x,U,DU,D^2U)=0\) is exactly
\eqref{eq:quantitative-physical-problem}.  The initial value at \(s=0\)
becomes the terminal value at \(t=T\), and forward lateral data
\(\widehat G(T-s,x)\) become \(\widehat G(t,x)\).

For the auxiliary equations in
\cite[equations~(4.6) and (4.11)]{PicarelliReisingerRotaetxe2020},
the same substitution sends a coefficient argument \(s+\theta\) to
\(T-(s+\theta)=t-\theta\).  It sends the scalar and switching
intervals to \([-\varepsilon^2,T]\) and \([-2\varepsilon^2,T]\),
respectively, with terminal datum \(\Psi\) at \(T\).

	We now define the two auxiliary problems.  For
	\(0<\varepsilon\le1\), let
	\begin{equation}
		\Theta_\varepsilon
		:=
		[0,\varepsilon^2]\times\overline B_\varepsilon(0).
		\label{eq:quantitative-shaking-set}
	\end{equation}
	For \(a\in A\), \((\theta,e)\in\Theta_\varepsilon\), and smooth \(w\), set
	\begin{align}
		F_{\theta,e}^a[w](t,x)
		:={}&
		\partial_t w(t,x)
		+\frac12\tr\!\left(
		A^a(t-\theta,x+e)D^2w(t,x)
		\right)\notag\\
		&+\mu^a(t-\theta,x+e)\cdot Dw(t,x)
		-r^a(t-\theta,x+e)w(t,x)
		+f^a(t-\theta,x+e).
		\label{eq:quantitative-shifted-expression}
	\end{align}
	
	The scalar shaken problem is
	\begin{equation}
		\begin{cases}
			\displaystyle
			\inf_{\substack{a\in A\\(\theta,e)\in\Theta_\varepsilon}}
			F_{\theta,e}^a[u^\varepsilon](t,x)=0,
			&(t,x)\in[-\varepsilon^2,T)\times O,\\[1mm]
			u^\varepsilon(t,x)=\widehat G(t,x),
			&(t,x)\in[-\varepsilon^2,T)\times\partial O,\\
			u^\varepsilon(T,x)=\Psi(x),
			&x\in\overline O.
		\end{cases}
		\label{eq:quantitative-scalar-shaken}
	\end{equation}
Under \(s=T-t\), its time interval becomes
\([0,T+\varepsilon^2]\).  More explicitly, if
\(U^\varepsilon(s,x)=u^\varepsilon(T-s,x)\), the forward equation is
\[
\partial_s U^\varepsilon+
\sup_{\substack{a\in A\\0\le\theta\le\varepsilon^2,\ |e|\le\varepsilon}}
\mathcal L^a(s+\theta,x+e,
 U^\varepsilon,DU^\varepsilon,D^2U^\varepsilon)=0.
\]
All values and derivatives of \(U^\varepsilon\) in this display are
evaluated at \((s,x)\); only the coefficients are shifted.

	For the switching construction, choose
	\(a_1,\ldots,a_J\in A\), \(J\ge2\), and \(k>0\).  The backward system used
	here is
	\begin{equation}
		\begin{cases}
			\displaystyle
			\max\left\{
			-\sup_{(\theta,e)\in\Theta_\varepsilon}
			F_{\theta,e}^{a_i}[v_i^{\varepsilon,k}](t,x),
			\;
			v_i^{\varepsilon,k}(t,x)
			-\min_{j\ne i}\bigl(v_j^{\varepsilon,k}(t,x)+k\bigr)
			\right\}=0,
			&
			\begin{array}{l}
				(t,x)\in[-2\varepsilon^2,T)\times O,\\
				i=1,\ldots,J,
			\end{array}\\[4mm]
			v_i^{\varepsilon,k}(t,x)=\widehat G(t,x),
			&
			\begin{array}{l}
				(t,x)\in[-2\varepsilon^2,T)\times\partial O,\\
				i=1,\ldots,J,
			\end{array}\\[3mm]
			v_i^{\varepsilon,k}(T,x)=\Psi(x),
			&
			\begin{array}{l}
				x\in\overline O,\\
				i=1,\ldots,J.
			\end{array}
		\end{cases}
		\label{eq:quantitative-shaken-switching}
	\end{equation}
For \(V_i(s,x)=v_i^{\varepsilon,k}(T-s,x)\), the forward system is
\[
\max\left\{
\partial_s V_i+
\inf_{\substack{0\le\theta\le\varepsilon^2\\|e|\le\varepsilon}}
\mathcal L^{a_i}(s+\theta,x+e,V_i,DV_i,D^2V_i),
\quad V_i-\min_{j\ne i}(V_j+k)
\right\}=0
\]
on \((0,T+2\varepsilon^2]\times O\), with initial datum \(\Psi\)
and lateral datum \(\widehat G(T-s,x)\).  The switching cost \(k\)
is unchanged.  The minus sign before the backward supremum in
\eqref{eq:quantitative-shaken-switching} follows from
\(-\sup F=\inf(-F)\).

The extra \(\varepsilon^2\) in the switching interval is inherited
from the forward construction in
\cite[Section~4.4]{PicarelliReisingerRotaetxe2020}.
Writing the shift in equation~(4.12) of that paper as
\(\overline V_i(s,x)=V_i(s+\varepsilon^2,x)\), this function is
defined on \([-\varepsilon^2,T+\varepsilon^2]\), because
\(V_i\) is defined on \([0,T+2\varepsilon^2]\).
We keep this larger auxiliary interval.  The convolution and the times
at which it is used for comparison are specified in
Section~\ref{subsec:quantitative-mollification}.

For every \(g\in\{\sigma,\mu,r,f\}\), the coefficient extensions give
\begin{equation}
\sup_{\substack{a\in A,\ (t,x)\in[0,T]\times\overline O\\
                 (\theta,e)\in\Theta_\varepsilon}}
|g^a(t-\theta,x+e)-g^a(t,x)|\le C\varepsilon.
\label{eq:auxiliary-coefficient-distance}
\end{equation}
Indeed, the parabolic distance between the two arguments is at most
\(\sqrt\theta+|e|\le2\varepsilon\).  This estimate also applies when
\(t-\theta<0\), by \eqref{eq:McShane-basic-properties}.
It is the coefficient bound used in the continuous-dependence argument
of \cite[Theorem~3.8]{PicarelliReisingerRotaetxe2020}.
Lemma~\ref{lem:extended-cylinder-barrier} supplies the uniform strict
barrier for both auxiliary cylinders.

	\begin{lemma}[Uniform regularity]
\label{lem:quantitative-regularity}
There exist \(L_{\rm reg}\ge1\), \(\varepsilon_0>0\), and \(k_0>0\),
depending only on the fixed data, such that the physical problem and
the auxiliary problems \eqref{eq:quantitative-scalar-shaken} and
\eqref{eq:quantitative-shaken-switching} have unique bounded continuous
strong Dirichlet solutions.  For
\(0<\varepsilon\le\varepsilon_0\) and \(0<k\le k_0\),
\begin{align}
\max\{\|u\|_\infty,[u]_{x;1},[u]_{t;1/2}\}&\le L_{\rm reg},
 &&\text{on }[0,T]\times\overline O,
\label{eq:quantitative-u-uniform}\\
\max\{\|u^\varepsilon\|_\infty,[u^\varepsilon]_{x;1},
 [u^\varepsilon]_{t;1/2}\}&\le L_{\rm reg},
 &&\text{on }[-\varepsilon^2,T]\times\overline O,
\label{eq:quantitative-shaken-uniform}\\
\max_i\max\{\|v_i^{\varepsilon,k}\|_\infty,[v_i^{\varepsilon,k}]_{x;1},
 [v_i^{\varepsilon,k}]_{t;1/2}\}&\le L_{\rm reg},
 &&\text{on }[-2\varepsilon^2,T]\times\overline O.
\label{eq:quantitative-switching-uniform}
\end{align}
The constant is independent of the number \(J\ge2\) and the choice of
controls in the switching system.  The physical solution is also the
unique bounded weak solution.
\end{lemma}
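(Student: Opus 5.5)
The proof reduces the three problems to the forward framework of \cite{PicarelliReisingerRotaetxe2020} and then checks that its constants do not depend on \(\varepsilon\), \(k\), \(J\), or the chosen controls.

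\emph{Time reversal.} Under \(s=T-t\), the physical problem, the scalar shaken problem \eqref{eq:quantitative-scalar-shaken}, and the switching system \eqref{eq:quantitative-shaken-switching} become forward problems of the type treated there. The coefficient tuple is \((A^a/2,\mu^a,-r^a,f^a)(T-s,x)\), extended by McShane's construction \eqref{eq:quant-McShane-extension}. These extensions are bounded on the collar and parabolically Lipschitz with constant \(K\) by \eqref{eq:McShane-basic-properties}. The discount stays nonnegative because of \eqref{eq:nonnegative-r-extension}. For the shaken families, the control set is enlarged to \(A\times\Theta_\varepsilon\). Shifting coefficients by \((\theta,e)\) does not change their bounds or Lipschitz constants, and \eqref{eq:extended-control-modulus} gives control continuity on this compact set. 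So the structural hypotheses of that paper hold with \(\varepsilon\)-independent constants.

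\emph{Barrier and lifting.} The lifting is \(\widehat G\), which satisfies \eqref{eq:quantitative-G-extension-bound}. The barrier on the extended cylinders \([-\varepsilon^2,T]\) and \([-2\varepsilon^2,T]\) comes from Lemma~\ref{lem:extended-cylinder-barrier}, with constants independent of \(\varepsilon\le\varepsilon_0\). I expect this to be the main obstacle. The strict inequality (Q2) holds for unshifted coefficients on \([0,T]\times O\). Two things must be checked: that for \(\varepsilon\) small it survives, with \(\beta/2\), for the shifted coefficients \(A^a(t-\theta,x+e)\) via \eqref{eq:auxiliary-coefficient-distance} and the \(C^2\) bound on \(\zeta\); and that \(\zeta\) can be continued to negative times, for instance by the reflection in \eqref{eq:quantitative-G-extension}, while keeping positivity and the sign of the residual. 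This is what fixes \(\varepsilon_0\).

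\emph{Comparison, existence, and bounds.} Assume now the barrier and the compatibility (Q3) on every cylinder. Lemma~\ref{lem:relaxed-is-strong}, applied verbatim to each problem, gives \(G-K\zeta\le v\le G+K\zeta\) for any bounded weak sub- or supersolution. This turns weak boundary conditions into strong ones. Comparison for strong solutions then follows from \cite[Theorem~3.8]{PicarelliReisingerRotaetxe2020}, or from the standard Crandall--Ishii--Lions argument: the Lipschitz coefficients control the doubling terms, and the barrier handles boundary points. Perron's method, with \(G\pm K\zeta\) as sub- and supersolution, yields a continuous solution, which is unique and is the unique bounded weak solution.

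The bounds in \eqref{eq:quantitative-u-uniform}--\eqref{eq:quantitative-switching-uniform} follow in three steps.
\begin{itemize}
\item The sup bound is immediate from the sandwich.
\item The spatial Lipschitz bound has two parts. At the boundary it comes from \(|v-G|\le K\zeta\) together with \(\zeta\le C\operatorname{dist}(x,\partial O)\). In the interior I would use the doubling/continuous-dependence argument with the barrier-corrected penalty \(L|x-y|\), as in that paper.
\item The \(1/2\)-Hölder bound in time follows by comparing \(v(t,\cdot)\) with the time-translated barriers \(v(t_0,\cdot)\pm C(|x-x_0|^2/\delta+\delta)\pm C\zeta\)-type functions over intervals of length \(\delta^2\).
\end{itemize}

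For the switching system I would use the comparison principle for switching systems with \(0<k\le k_0\). The constants depend only on bounds valid uniformly for each \(a_i\); comparison is applied componentwise via \(\max_i\) and \(\min_i\), so they are independent of \(J\) and of the controls.
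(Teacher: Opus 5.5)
Your proposal is correct and follows essentially the paper's route. You reverse time into the framework of \cite{PicarelliReisingerRotaetxe2020} with the control set enlarged to $A\times\Theta_\varepsilon$, obtain uniformity in $\varepsilon$ from the lifting bound \eqref{eq:quantitative-G-extension-bound} and Lemma~\ref{lem:extended-cylinder-barrier}, and then use that paper's comparison, existence and Lipschitz/H\"older arguments with constants independent of the shifts and of $J$; you sketch these arguments where the paper simply cites them. One minor remark: the paper's extended barrier uses the affine time compression $\chi_\varepsilon$ rather than a reflection, which makes positivity and the zero lateral trace automatic, whereas a reflected $\zeta$ would also require a lower bound of $\zeta$ by a multiple of the distance to $\partial O$.
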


\begin{proof}
The time reversal and assumption correspondence above allow us to use
\cite[Theorems~3.3, 3.5, and~3.7]{PicarelliReisingerRotaetxe2020}:
they give comparison, existence, and the stated spatial Lipschitz and
temporal one-half-H\"older bounds for the physical solution.
Lemma~\ref{lem:relaxed-is-strong} identifies the weak and strong classes.

For the auxiliary problems, time reversal preserves the parabolic
distance.  Every shifted coefficient has the same spatial Lipschitz
and temporal one-half-H\"older bounds, uniformly in the shift.
The lifting bound \eqref{eq:quantitative-G-extension-bound} and
Lemma~\ref{lem:extended-cylinder-barrier} give the boundary estimates
on the longer cylinders.

We apply the comparison, existence and regularity arguments of
\cite[Section~3.1, Proposition~3.10, and Lemma~4.8]{PicarelliReisingerRotaetxe2020}
to these coefficient families.  Their estimates are uniform over the
shift parameters: taking a supremum or an infimum preserves a common
bound on the coefficients, and the strict barrier applies to each
fixed shift.  The forward equations have the coefficient arguments
\(s+\theta\) and cylinder lengths at most \(T+2\).  They yield the stated uniform
bounds for the scalar equation and the minimum-shaken switching
system.  The constants in the switching estimates are independent of
the number of modes.  Returning to \(t=T-s\) proves the result.
\end{proof}

\subsection{Interior residual and high-order consistency}
	
	The numerical update at an interior node is
	\[
	u_{h,i}^n=\inf_{a\in A}S_{n,i}^a[u_h^{n+1}].
	\]
	For the consistency and comparison arguments below, we measure the defect in
	this equality per unit effective time.  Thus, for two adjacent grid layers
	\(U^n\) and \(U^{n+1}\), define
	\begin{equation}
		\mathcal R_{h,n,i}^a[U_i^n,U^{n+1}]
		:=\frac{S_{n,i}^a[U^{n+1}]-U_i^n}{\tau_{n,i}(a)},
		\qquad
		\mathcal S_{h,n,i}[U_i^n,U^{n+1}]
		:=\inf_{a\in A}\mathcal R_{h,n,i}^a[U_i^n,U^{n+1}].
		\label{eq:bc-residual-main}
	\end{equation}
	The denominator must remain inside the infimum because the effective time
	\(\tau_{n,i}(a)\) depends on the control.  For the node under consideration,
	Lemma~\ref{lem:effective-time-lower-bound} gives
	\[
	0<m_{n,i}\le \tau_{n,i}(a)\le\dt
	\]
	for every control \(a\) at this interior node, and therefore
	\begin{equation}
		\mathcal S_{h,n,i}[u_{h,i}^n,u_h^{n+1}]=0
		\quad\Longleftrightarrow\quad
		u_{h,i}^n=\inf_{a\in A}S_{n,i}^a[u_h^{n+1}].
		\label{eq:bc-residual-equivalence}
	\end{equation}
	Indeed, if the update holds, the numerators are nonnegative and a
minimizing sequence makes them tend to zero; division by
\(\tau_{n,i}(a)\ge m_{n,i}\) preserves that limit.  Conversely, if the
residual infimum is zero, all numerators are nonnegative and a minimizing
sequence of residuals gives numerators tending to zero because
\(\tau_{n,i}(a)\le\dt\).

We shall use two constants to describe how far one complete numerical update
	can reach.  The mesh assumption \textup{(G4)} provides a constant
	\(C_{\rm patch}\), independent of \(h\), such that every point of the
	interpolation simplex associated with an endpoint \(y\) lies within
	\(C_{\rm patch}h\) of \(y\).  We also recall the branch-displacement constant
	from Lemma~\ref{lem:effective-time-lower-bound}:
	\begin{equation}
		B_{\mathcal C}
		:=1+
		\sup_{\substack{
				(t,x,a)\in[0,T]\times\overline O\times A\\
				1\le j\le M}}
		\left(
		|\mu(t,x,a)|+|\sigma(t,x,a)\eta_j|
		\right).
		\label{eq:main-branch-bound-constant}
	\end{equation}
	During one time step, every branch moves by
	at most \(B_{\mathcal C}\sqrt\dt\).
	
	\begin{theorem}[High-order consistency at nodes away from the boundary]
		\label{thm:main-high-order-consistency}
		Let \(0<\dt\le1\), \(0<\varepsilon\le1\), and let \(\phi\) be smooth on
		\([t_n,t_{n+1}]\times O_\varepsilon\).
		Assume that, on this region,
		\begin{equation}
			\|\phi\|_\infty\le C_{\rm reg},
			\qquad
			\|D_x^\alpha\partial_t^q\phi\|_\infty
			\le C_{\rm reg}\varepsilon^{1-|\alpha|-2q},
			\qquad 1\le |\alpha|+2q\le6.
			\label{eq:main-mollified-derivatives}
		\end{equation}
		Here all norms are taken over \([t_n,t_{n+1}]\times O_\varepsilon\), and
		\(C_{\rm reg}\) is independent of \(\varepsilon\), \(h\), \(\dt\), and the
		control.  
		
		Assume that the current node has the boundary clearance
		\begin{equation}
			\operatorname{dist}(x_i,\partial O)
			\ge
			2\varepsilon
			+B_{\mathcal C}\sqrt\dt
			+C_{\rm patch}h.
			\label{eq:main-high-order-collars}
		\end{equation}
		The last two terms cover the maximum branch displacement and the width of
		the interpolation simplex.  The remaining \(2\varepsilon\) margin leaves a
		full \(\varepsilon\)-neighborhood of every sampled point inside
		\(O_\varepsilon\).
		Then, for every control and every signed cubature direction,
		\begin{equation}
			d_{n,i}^{a,j,\pm}=\dt,
			\qquad
			\tau_{n,i}(a)=\dt,
			\qquad
			\omega_{n,i}^{a,j,\pm}=\frac{\rho_j}{2},
			\label{eq:main-no-stop-weights}
		\end{equation}
		and every point used for interpolation is at least \(2\varepsilon\) from
		\(\partial O\), and hence belongs to \(O_\varepsilon\).  Let
		\(\phi_h^{n+1}\) be any next-layer grid vector that agrees with
		\(\phi(t_{n+1},\cdot)\) at every interpolation vertex used by these
		full-step branches.  Its values at all other vertices do not enter this
		update.  Then, uniformly in \(a\),
		\begin{align}
			&\left|
			\mathcal R_{h,n,i}^a
			[\phi(t_n,x_i),\phi_h^{n+1}]
			-F^a[\phi](t_n,x_i)
			\right|\notag\\
			&\qquad\le C\left(
			\dt\varepsilon^{-3}+\dt^2\varepsilon^{-5}
			+\frac{h^2}{\dt}\varepsilon^{-1}
			\right).
			\label{eq:main-high-order-fixed}
		\end{align}
		Consequently,
		\begin{align}
			&\left|
			\mathcal S_{h,n,i}
			[\phi(t_n,x_i),\phi_h^{n+1}]
			-\inf_{a\in A}F^a[\phi](t_n,x_i)
			\right|\notag\\
			&\qquad\le C\left(
			\dt\varepsilon^{-3}+\dt^2\varepsilon^{-5}
			+\frac{h^2}{\dt}\varepsilon^{-1}
			\right).
			\label{eq:main-high-order-HJB}
		\end{align}
		The constant \(C\) is independent of \(h\), \(\dt\), \(\varepsilon\),
		\(n\), \(i\), and \(a\).
	\end{theorem}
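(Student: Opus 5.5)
First we verify the geometric claims in \eqref{eq:main-no-stop-weights}. Every branch path \eqref{eq:branch-path} satisfies \(|\Gamma_{n,i}^{a,j,\pm}(q)-x_i|\le q|\mu_i^n(a)|+\sqrt q\,|\sigma_i^n(a)\eta_j|\le B_{\mathcal C}\sqrt\dt\) for \(q\le\dt\le1\). By \eqref{eq:main-high-order-collars}, each path therefore stays at distance at least \(2\varepsilon+C_{\rm patch}h>0\) from \(\partial O\), so \(\mathcal H_{n,i}^{a,j,\pm}=\emptyset\). Hence every branch is unstopped, \(d=\dt\), \(s_j^\pm=\sqrt\dt\), and the last assertion of Proposition~\ref{prop:weights} gives \(\tau=\dt\) and \(\omega=\rho_j/2\). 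By \textup{(G4)}, every interpolation vertex and every segment \([X^{*},x_\ell]\) lies within \(C_{\rm patch}h\) of an endpoint, so it is at distance at least \(2\varepsilon\) from \(\partial O\) and lies in \(O_\varepsilon\). Consequently only values of \(\phi(t_{n+1},\cdot)\) at these vertices enter, and the row equals \(\widehat S_{n,i}^a[\phi]\) computed with \(\phi\).

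Next we split the residual as in the proof of Proposition~\ref{prop:stopped-consistency}, now exploiting the symmetry. The interpolation part is handled by the computation behind \eqref{eq:Isp-error}, carried out along segments inside \(O_\varepsilon\): the bound is \(\|D\phi\|h^2+\|D^2\phi\|h^2\le Ch^2\varepsilon^{-1}\), since \(\varepsilon\le1\) and \(\|D^2\phi\|\le C\varepsilon^{-1}\). Multiplying by total mass \(1\) and dividing by \(\tau=\dt\) gives \(h^2\dt^{-1}\varepsilon^{-1}\). The discount and source terms are exact up to \(O(\dt^2)\) times \(\|\phi\|+\|\partial_t\phi\|+\dots\). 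More precisely, \((1+r\dt)^{-1}=1-r\dt+O(\dt^2)\) multiplies \(\phi(t_{n+1},X)-\phi_i^n=O(\sqrt\dt)\) in the mean. The cross term \(-r\dt(\phi(X)-\phi_i^n)\) averages to \(O(\dt^2\|D^2\phi\|+\dt^2\|\partial_t\phi\|)\) after the odd part cancels. After division by \(\dt\), all of these are \(O(\dt\varepsilon^{-1})\le O(\dt\varepsilon^{-3})\).

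The main step is a higher-order Taylor expansion of \(\phi(t_n+\dt,x_i+\dt\mu+\sqrt\dt z)\), with \(z=\pm\sigma\eta_j\), around \((t_n,x_i)\) in the variable \(\sqrt\dt\). We expand to order \(\dt^{5/2}\) with a remainder of order \(\dt^3\) involving derivatives of parabolic weight \(6\). The segment between the two points lies in \([t_n,t_{n+1}]\times O_\varepsilon\), so \eqref{eq:main-mollified-derivatives} applies. The order-\(\sqrt\dt\) term cancels under the symmetric weights, and the order-\(\dt\) terms produce \(\dt F^a[\phi]\). At order \(\dt^{3/2}\), the only terms are odd in \(z\): \(D^3\phi[z,z,z]\), together with \(\dt^{3/2}D^2\phi[\mu,z]\) and \(\dt^{3/2}\partial_tD\phi\cdot z\). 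Because each antipodal pair has equal mass \(\rho_j/2\), all of these cancel exactly. This cancellation is the key point. The order-\(\dt^2\) terms involve weight-4 derivatives such as \(D^4\phi\), \(\partial_{tt}\phi\), \(\partial_tD^2\phi\), and \(D^3\phi\,\mu\). Each is bounded by \(C\varepsilon^{-3}\), giving \(\dt\varepsilon^{-3}\) after division. The order-\(\dt^{5/2}\) terms are again odd in \(z\) and cancel. The remainder is \(O(\dt^3\varepsilon^{-5})\), which becomes \(\dt^2\varepsilon^{-5}\).

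The obstacle is bookkeeping rather than ideas. We must confirm that every even-order term has size \(\dt^{k}\varepsilon^{1-2k}\) in the correct weight, and that mixed drift terms of lower parabolic weight but higher power of \(\dt\) are dominated. For example, \(\dt^2|\mu|^2\|D^2\phi\|\le\dt^2\varepsilon^{-1}\le\dt^2\varepsilon^{-3}\), using \(\varepsilon\le1\). The expansion is cleanest if we write \(g(\lambda)=\phi(t_n+\lambda^2,x_i+\lambda^2\mu+\lambda z)\) and apply the one-variable Taylor formula to order \(5\) with integral remainder. Then \(g^{(k)}\) is a polynomial in derivatives of \(\phi\) of parabolic weight at most \(k\) with bounded coefficients, and each \(g^{(k)}(0)\) with \(k\) odd is odd in \(z\). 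Once \eqref{eq:main-high-order-fixed} is uniform in \(a\), \eqref{eq:main-high-order-HJB} follows because \(\tau=\dt\) is independent of \(a\): we have \(|\inf_a X_a-\inf_a Y_a|\le\sup_a|X_a-Y_a|\).
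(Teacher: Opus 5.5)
Your outline follows the paper's proof: no stopping, the $Ch^2\varepsilon^{-1}/\dt$ interpolation bound, the discount algebra, antipodal cancellation, and comparison of the infima. However, the central Taylor step fails under the stated hypotheses.

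You claim that $g^{(k)}$ is a polynomial in derivatives of $\phi$ of parabolic weight at most $k$. This holds only at $\lambda=0$. For $\lambda>0$ the time argument $t_n+\lambda^2$ has derivative $2\lambda\neq0$, so a single $\lambda$-derivative can produce a full time derivative. Already $g''(\lambda)$ contains $4\lambda^2\partial_{tt}\phi$, of weight $4$, and $g^{(6)}(\lambda)$ contains $(2\lambda)^6\partial_t^6\phi$, of weight $12$. In general, a term of $g^{(6)}(\lambda)$ with $q_1$ time derivatives carrying the factor $2\lambda$ and $a_2$ spatial derivatives carrying the factor $2\mu$ has weight $6+q_1-a_2$. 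The integral remainder $\frac1{5!}\int_0^{\sqrt\dt}(\sqrt\dt-\lambda)^5g^{(6)}(\lambda)\,d\lambda$ therefore involves $\partial_t^qD_x^\alpha\phi$ with $|\alpha|+2q$ up to $12$. Hypothesis \eqref{eq:main-mollified-derivatives} says nothing about these derivatives. Hence the bound $C\dt^3\varepsilon^{-5}$, with $C$ depending only on $C_{\rm reg}$, does not follow. These extra terms carry $\lambda^{q_1}$, so they would be harmless if you also assumed scaled bounds of all orders and $\dt\le\varepsilon^2$. That holds for the mollified functions used later, but it is not the hypothesis of this theorem.

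The fix, as in Lemma~\ref{lem:app-sixth-order-taylor}, is to keep the time and space expansions separate.
\begin{itemize}
\item First expand in time at the fixed endpoint $x_i+y$ to second order, with remainder at most $C\dt^3\|\partial_{ttt}\phi\|_\infty$.
\item Then expand $\phi(t_n,\cdot)$, $\partial_t\phi(t_n,\cdot)$ and $\partial_{tt}\phi(t_n,\cdot)$ along $[x_i,x_i+y]$ to spatial orders $5$, $3$ and $1$, respectively.
\end{itemize}
Every remainder then involves only $D_x^6\phi$, $D_x^4\partial_t\phi$, $D_x^2\partial_{tt}\phi$ and $\partial_{ttt}\phi$, all of weight six. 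Since $|y|\le C\sqrt\dt$, the remainders are $O(\dt^3\varepsilon^{-5})$.

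The retained terms behave as you intended. Odd powers of $\sigma^a\eta_j$ cancel within each antipodal pair (Lemma~\ref{lem:app-cubature-moments}), and the order-$\dt$ part gives $\dt L^a\phi$. Everything else is $O(\dt^2\varepsilon^{-3}+\dt^3\varepsilon^{-5})$. With this replacement the rest of your argument goes through.
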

	
	\begin{proof}
We first estimate the endpoint average, then add discount, source, and
interpolation.  Condition~\eqref{eq:main-high-order-collars} keeps
every branch at least \(2\varepsilon+C_{\rm patch}h\) from the boundary.
Thus \(d:=\dt\), the weights satisfy \eqref{eq:main-no-stop-weights},
and all active interpolation simplices and Taylor segments lie in
\(O_\varepsilon\).

Freeze the coefficients at \((t_n,x_i,a)\), and put
\[
\phi_0=\phi(t_n,x_i),\qquad
y_j^\pm=d\mu^a\pm\sqrt d\,\sigma^a\eta_j,\qquad
\overline\phi_d^{\,a}
=\sum_j\frac{\rho_j}{2}
 \bigl[\phi(t_n+d,x_i+y_j^+)+\phi(t_n+d,x_i+y_j^-)\bigr].
\]
In each pair, terms with an odd number of diffusion factors cancel.
The first two moments are
\[
\sum_j\frac{\rho_j}{2}(y_j^++y_j^-)=d\mu^a,\qquad
\sum_{j,\pm}\frac{\rho_j}{2}y_j^\pm(y_j^\pm)^\top
=dA^a+d^2\mu^a(\mu^a)^\top.
\]
Lemmas~\ref{lem:app-sixth-order-taylor} and
\ref{lem:app-mollified-truncation}, with
\eqref{eq:main-mollified-derivatives}, therefore give
\begin{equation}
\overline\phi_d^{\,a}=\phi_0+dL^a\phi+R_{\rm tay}^a,\qquad
|R_{\rm tay}^a|\le C(d^2\varepsilon^{-3}+d^3\varepsilon^{-5}),
\label{eq:main-average-expansion}
\end{equation}
where \(L^a\phi=\phi_t+\mu^a\cdot D\phi+\tfrac12\tr(A^aD^2\phi)\)
at the current point.

Set \(q_d=(1+r^ad)^{-1}\) and
\(S_{{\rm ex},d}^a[\phi]=q_d\overline\phi_d^{\,a}+df^a\).
Subtracting \(F^a[\phi]=L^a\phi-r^a\phi_0+f^a\), and using
\(q_d-1=-r^adq_d\), gives the exact identity
\begin{equation}
\begin{aligned}
\frac{S_{{\rm ex},d}^a[\phi]-\phi_0}{d}-F^a[\phi]
&=(q_d-1)L^a\phi+r^a(1-q_d)\phi_0+q_dR_{\rm tay}^a/d\\
&=q_dR_{\rm tay}^a/d+dq_d\bigl[(r^a)^2\phi_0-r^aL^a\phi\bigr].
\end{aligned}
\label{eq:main-discount-identity}
\end{equation}
The derivative bounds imply \(|L^a\phi|\le C\varepsilon^{-1}\).
Since \(q_d\le1\) and \(\varepsilon\le1\), it follows that
\begin{equation}
\left|\frac{S_{{\rm ex},d}^a[\phi]-\phi_0}{d}-F^a[\phi]\right|
\le C(d\varepsilon^{-3}+d^2\varepsilon^{-5}).
\label{eq:main-exact-consistency}
\end{equation}

At every active vertex, the supplied grid vector equals
\(\phi(t_n+d,\cdot)\).  Hence \eqref{eq:Isp-error} gives
\[
\left|\Isp[\phi_h^{n+1}](x_i+y_j^\pm)
       -\phi(t_n+d,x_i+y_j^\pm)\right|
\le Ch^2(\|D\phi\|_\infty+\|D^2\phi\|_\infty)
\le Ch^2\varepsilon^{-1}.
\]
The positive weights sum to one, so the same bound controls the
difference between the numerical row and \(S_{{\rm ex},d}^a\).
Dividing it by \(\tau_{n,i}(a)=d\) and adding
\eqref{eq:main-exact-consistency} proves
\eqref{eq:main-high-order-fixed}.
Finally, \(|\inf_a X_a-\inf_a Y_a|\le\sup_a|X_a-Y_a|\)
gives \eqref{eq:main-high-order-HJB}.
\end{proof}

	\subsection{Stopped barriers}
	
	Fix once and for all the universal value \(\kappa:=1\), and define
	\[
	C_G
	:=
	\sup_{(t,x,a)\in[0,T]\times\overline O\times A}
	|F^a[G](t,x)|.
	\]
	Choose
	\begin{equation}
		K_{\rm bar}
		\ge
		\max\left\{
		C_{\rm comp},
		\frac{C_G+2\kappa}{\beta}
		\right\},
		\qquad
		\Phi_+:=G+K_{\rm bar}\zeta,
		\qquad
		\Phi_-:=G-K_{\rm bar}\zeta.
		\label{eq:bc-main-barriers}
	\end{equation}
	We show that these barriers remain discrete upper and lower bounds:
	the strict differential inequalities must dominate the stopped-row
	remainder after division by \(\tau\).
	
	\begin{proposition}[Stopped barriers]
		\label{prop:bc-main-barrier}
		Assume \(0<\dt\le1\).  There is a constant \(C_{\rm bar}>0\),
		independent of \(h\), \(\dt\), the row, and the control, such that every
		interior row, including a row on which one or more branches stop before
		\(t_{n+1}\), satisfies
		\begin{align}
			\mathcal R_{h,n,i}^a
			\bigl[\Phi_+(t_n,x_i),\Phi_+(t_{n+1},\cdot)\bigr]
			&\le
			-\kappa
			+C_{\rm bar}
			\left(
			\dt^{\gamma/2}+\frac{h^2}{\dt}
			\right),
			\label{eq:bc-main-bar-plus}\\
			\mathcal R_{h,n,i}^a
			\bigl[\Phi_-(t_n,x_i),\Phi_-(t_{n+1},\cdot)\bigr]
			&\ge
			\kappa
			-C_{\rm bar}
			\left(
			\dt^{\gamma/2}+\frac{h^2}{\dt}
			\right).
			\label{eq:bc-main-bar-minus}
		\end{align}
		If
		\begin{equation}
			C_{\rm bar}
			\left(
			\dt^{\gamma/2}+\frac{h^2}{\dt}
			\right)
			\le\frac{\kappa}{2},
			\label{eq:bc-main-bar-small}
		\end{equation}
		then
		\begin{equation}
			G-K_{\rm bar}\zeta
			\le u_h\le
			G+K_{\rm bar}\zeta,
			\qquad
			G-K_{\rm bar}\zeta
			\le u\le
			G+K_{\rm bar}\zeta.
			\label{eq:bc-main-sandwich}
		\end{equation}
	\end{proposition}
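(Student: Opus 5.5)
The plan is to apply the test-row expansion of Proposition~\ref{prop:stopped-consistency} to \(\varphi=\Phi_\pm\), but with \(C^{2,\gamma}\) regularity in place of smoothness. First I would check that on a stopped branch the scheme row and the test row coincide for \(\Phi_\pm\). At a lateral contact before \(T\), \(\zeta=0\) and \(G=u_\partial\), so \(\Phi_\pm=u_\partial\), which is exactly what \(S_{n,i}^a\) reads. At a terminal contact, the scheme reads \(\Psi\), while \(\Phi_\pm(T,\cdot)=G(T)\pm K_{\rm bar}\zeta(T)\); (Q3) with \(K_{\rm bar}\ge C_{\rm comp}\) gives \(\Phi_-(T)\le\Psi\le\Phi_+(T)\). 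Hence \(S_{n,i}^a[\Phi_+(t_{n+1},\cdot)]\le\widehat S_{n,i}^a[\Phi_+]\), and the reverse inequality holds for \(\Phi_-\), because the branch coefficients are nonnegative.

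Next I would expand \(\widehat S^a_{n,i}[\Phi_\pm]-\Phi_\pm(t_n,x_i)\). I would use the branchwise Taylor formula of Lemma~\ref{lem:bc-app-branch-remainder} with the Hölder exponent \(\gamma\) rather than \(\gamma=1\). Each branch of length \(s=\sqrt d\) then has a second-order remainder of size \(Cs^{2+\gamma}\|\Phi_\pm\|_{2,\gamma}\): the \(D^2\) term is Hölder in the parabolic distance, \(\partial_t\) is Hölder, and the drift cross terms are of order \(s^3\). This step needs the whole branch segment to lie in \(\overline O\), which holds because the path is stopped at first contact. The discount and source terms are treated exactly as before and give \(O(s^3+s^4)\) errors.

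Summing with the stopped weights, the moment identities \eqref{eq:weight-moments} give the principal part \(\tau F^a[\Phi_\pm](t_n,x_i)\). The remainders are controlled by
\[
\sum\omega_j^\pm (s_j^\pm)^{2+\gamma}\le\dt^{\gamma/2}\tau .
\]
Interpolation is handled by \eqref{eq:interpolated-branch-mass} together with \eqref{eq:Isp-error}, using \(\|D\Phi_\pm\|+\|D^2\Phi_\pm\|\le C\). This gives a contribution \(C\tau h^2/\dt\). After dividing by \(\tau\) and using
\[
F^a[\Phi_+]\le C_G-K_{\rm bar}\beta\le-2\kappa,
\]
obtained from (Q2) with \(r\ge0\), I get \eqref{eq:bc-main-bar-plus}, and actually the sharper bound \(-2\kappa+\dots\). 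The argument for \eqref{eq:bc-main-bar-minus} is symmetric.

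For the sandwich, I would assume \eqref{eq:bc-main-bar-small}. Then the residual is at most \(-\kappa/2<0\), so \(\Phi_+(t_n,x_i)\ge S_{n,i}^a[\Phi_+^{n+1}]\ge S_{n,i}[\Phi_+^{n+1}]\) at every interior node. Moreover \(\Phi_+\ge u_h\) on lateral nodes (where both equal \(u_\partial\)) and on terminal nodes (by (Q3)). Proposition~\ref{prop:bounded-monotone} then gives \(u_h\le\Phi_+\). For the lower barrier, \(\Phi_-(t_n,x_i)\le S_{n,i}^a[\Phi_-^{n+1}]\) for every \(a\); taking the infimum shows \(\Phi_-\) is a discrete subsolution, so \(\Phi_-\le u_h\). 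The continuous bounds follow from Lemma~\ref{lem:relaxed-is-strong}, since \(K_{\rm bar}\ge(C_G+1)/\beta\) and \(u\) is a weak, hence strong, solution by Lemma~\ref{lem:quantitative-regularity}.

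The main obstacle is the Hölder Taylor step. Its remainder must carry the factor \((s_j^\pm)^{2+\gamma}\) branch by branch, uniformly in the stopping time and including branches near corners of the time-space domain. Only then does summation produce \(\tau\dt^{\gamma/2}\) rather than \(\dt^{\gamma/2}\), and this uniformity in \(\tau\) is what allows division by an effective time that may be tiny.
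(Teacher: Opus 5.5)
Your proposal is correct and follows essentially the same route as the paper: a branchwise \(C^{2,\gamma}\) Taylor expansion via Lemma~\ref{lem:bc-app-branch-remainder}, the bound \(\sum_{j,\pm}\omega_j^\pm(s_j^\pm)^{2+\gamma}\le\tau\dt^{\gamma/2}\), the interpolated-mass bound \(2\tau/\dt\), division by \(\tau\), and discrete comparison. The only deviations are harmless. At terminal contacts you use \textup{(Q3)} as an inequality, whereas the paper gets equality because the contact lies on \(\partial O\), where \(\zeta(T,\cdot)=0\). For the continuous sandwich you invoke Lemma~\ref{lem:relaxed-is-strong}, which applies since \(K_{\rm bar}\ge\max\{C_{\rm comp},(C_G+1)/\beta\}\), while the paper cites the continuous comparison theorem.
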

	
	\begin{proof}
Fix a row \((n,i)\) and a control \(a\), and freeze all coefficients
there.  Write \(\tau=\tau_{n,i}(a)\), \(s_j^\pm=\sqrt{d_j^\pm}\),
\(z_j^\pm=\pm\sigma\eta_j\),
\(y_j^\pm=(s_j^\pm)^2\mu+s_j^\pm z_j^\pm\), and
\(q_j^\pm=(1+r(s_j^\pm)^2)^{-1}\).
The two durations in a pair remain separate throughout the calculation.

\smallskip
\noindent\emph{Stopped endpoint values and the remainder.}
For \(\phi=\Phi_+\) or \(\Phi_-\), each stopped continuation equals
\(\phi(t_n+d_j^\pm,x_i+y_j^\pm)\).  Indeed, \(\chi_j^\pm=1\)
means that the endpoint \(x_i+y_j^\pm\) lies on \(\partial O\),
by \eqref{eq:branch-duration}.  Before \(T\), equality follows from
\(G=u_\partial\) and \(\zeta=0\).  At \(T\), \textup{(Q1)} gives
\(\zeta(T,x_i+y_j^\pm)=0\), so \textup{(Q3)} gives
\(\Psi=G(T)=\Phi_+(T)=\Phi_-(T)\) at this endpoint.
Unstopped branches read the supplied next-layer vector
\(\phi(t_{n+1},\cdot)\), also when \(t_{n+1}=T\).
Denote by \(S_{n,i}^{a,{\rm ex}}[\phi]\) the row with point evaluation
instead of interpolation on these unstopped branches.

The curve \(x_i+\lambda^2\mu+\lambda z_j^\pm\),
\(0\le\lambda\le s_j^\pm\), stays in \(\overline O\) up to first
contact.  Lemma~\ref{lem:bc-app-branch-remainder} therefore gives, with
derivatives at \((t_n,x_i)\) and \(\phi_0=\phi(t_n,x_i)\),
\begin{align}
\phi(t_n+d_j^\pm,x_i+y_j^\pm)
={}&\phi_0+(s_j^\pm)^2(\phi_t+\mu\cdot D\phi)
 +s_j^\pm D\phi\cdot z_j^\pm\notag\\
&+\tfrac12(s_j^\pm)^2(z_j^\pm)^\top D^2\phi\,z_j^\pm
 +(s_j^\pm)^3\mu^\top D^2\phi\,z_j^\pm\notag\\
&+\tfrac12(s_j^\pm)^4\mu^\top D^2\phi\,\mu+\mathfrak r_j^\pm,
\label{eq:bc-main-branch-taylor}
\end{align}
where \(|\mathfrak r_j^\pm|\le C(s_j^\pm)^{2+\gamma}\).
The moment identities \eqref{eq:weight-moments} imply, for every
\(\ell\ge2\),
\begin{equation}
\sum_{j,\pm}\omega_j^\pm(s_j^\pm)^\ell
\le\dt^{(\ell-2)/2}\sum_{j,\pm}\omega_j^\pm(s_j^\pm)^2
=\tau\dt^{(\ell-2)/2}.
\label{eq:bc-main-stopped-powers}
\end{equation}
This factor \(\tau\) is retained in every error estimate below.

Since \(r\ge0\) is bounded,
\[
0<q_j^\pm\le1,\qquad
q_j^\pm=1-r(s_j^\pm)^2+O((s_j^\pm)^4).
\]
Substituting \eqref{eq:bc-main-branch-taylor} in
\[
S_{n,i}^{a,{\rm ex}}[\phi]-\phi_0
=\sum_{j,\pm}\omega_j^\pm q_j^\pm
  [\phi(t_n+d_j^\pm,x_i+y_j^\pm)-\phi_0]
 +\phi_0\sum_{j,\pm}\omega_j^\pm(q_j^\pm-1)+f\tau
\]
and using \eqref{eq:weight-moments} cancels the first noise moment and
gives the diffusion term \(\tfrac12\tau\tr(A^aD^2\phi)\).  Thus
\begin{equation}
S_{n,i}^{a,{\rm ex}}[\phi]-\phi_0
=\tau F^a[\phi](t_n,x_i)+R_{\rm ex}[\phi],
\qquad |R_{\rm ex}[\phi]|\le C\tau\dt^{\gamma/2}.
\label{eq:bc-main-exact-consistency}
\end{equation}
To estimate \(R_{\rm ex}\), first note that the undiscounted noise
terms cancel by \eqref{eq:weight-moments}:
\[
\sum_{j,\pm}\omega_j^\pm s_j^\pm D\phi\cdot z_j^\pm=0.
\]
The discount leaves only the correction with the additional factor
\(q_j^\pm-1=O((s_j^\pm)^2)\).  This correction and the
\((s_j^\pm)^3\mu^\top D^2\phi\,z_j^\pm\) terms in
\eqref{eq:bc-main-branch-taylor} are bounded by
\(C\sum_{j,\pm}\omega_j^\pm(s_j^\pm)^3\).
The quadratic drift terms and the other discount corrections are bounded
by \(C\sum_{j,\pm}\omega_j^\pm(s_j^\pm)^4\).
Finally, Lemma~\ref{lem:bc-app-branch-remainder} bounds the Taylor
remainders by \(C\sum_{j,\pm}\omega_j^\pm(s_j^\pm)^{2+\gamma}\).
Applying \eqref{eq:bc-main-stopped-powers} to these three sums gives
\[
|R_{\rm ex}[\phi]|
\le C\tau\left(\Delta t^{1/2}+\Delta t+\Delta t^{\gamma/2}\right).
\]
Since \(0<\gamma\le1\) and \(\dt\le1\), both \(\dt^{1/2}\)
and \(\dt\) are at most \(\dt^{\gamma/2}\), proving the remainder
bound in \eqref{eq:bc-main-exact-consistency}.

By \eqref{eq:interpolated-branch-mass}, the total interpolated weight
is at most \(2\tau/\dt\).  Applying \eqref{eq:Isp-error}, including
the evaluation map \(p_h\), gives

\begin{equation}
\big|S_{n,i}^a[\phi(t_{n+1},\cdot)]-S_{n,i}^{a,{\rm ex}}[\phi]\big|
\le C\tau\frac{h^2}{\dt}.
\label{eq:bc-main-bar-interpolation}
\end{equation}
Together with \eqref{eq:bc-main-exact-consistency}, this proves
\begin{equation}
S_{n,i}^a[\phi(t_{n+1},\cdot)]-\phi_0
=\tau F^a[\phi](t_n,x_i)+R_{n,i}^a[\phi],
\quad |R_{n,i}^a[\phi]|
\le C_{\rm bar}\tau\left(\dt^{\gamma/2}+\frac{h^2}{\dt}\right).
\label{eq:bc-main-bar-master}
\end{equation}

\smallskip
\noindent\emph{Barrier comparison.}
Condition \textup{(Q2)} and the choice of \(K_{\rm bar}\) give
\begin{equation}
F^a[\Phi_+]\le C_G-K_{\rm bar}\beta\le-2\kappa,
\qquad
F^a[\Phi_-]\ge-C_G+K_{\rm bar}\beta\ge2\kappa.
\label{eq:bc-main-cont-barriers}
\end{equation}
These inequalities make \(\Phi_-\) a subsolution and \(\Phi_+\) a
supersolution of \eqref{eq:quantitative-physical-problem}.
On the lateral boundary, both equal \(u_\partial\), since \(G=u_\partial\)
and \(\zeta=0\).  At the terminal time, \textup{(Q3)} and
\(K_{\rm bar}\ge C_{\rm comp}\) give
\[
\Phi_-(T,\cdot)\le\Psi\le\Phi_+(T,\cdot).
\]
The solution \(u\) attains these data by
Lemma~\ref{lem:quantitative-regularity}.  Applying continuous comparison
\cite[Theorem~3.3]{PicarelliReisingerRotaetxe2020} after time reversal
therefore yields \(\Phi_-\le u\le\Phi_+\).

For the discrete solution, divide \eqref{eq:bc-main-bar-master} by
\(\tau>0\) and use \eqref{eq:bc-main-cont-barriers}.  Weakening the
fixed margin to \(\kappa\) gives
\eqref{eq:bc-main-bar-plus}--\eqref{eq:bc-main-bar-minus}.
Under \eqref{eq:bc-main-bar-small}, every control row satisfies
\[
S_{n,i}^a[\Phi_+(t_{n+1},\cdot)]\le\Phi_+(t_n,x_i),
\qquad
S_{n,i}^a[\Phi_-(t_{n+1},\cdot)]\ge\Phi_-(t_n,x_i).
\]
Taking the infimum over \(a\) gives the discrete supersolution and
subsolution inequalities.  Together with the lateral and terminal
ordering above, Proposition~\ref{prop:bounded-monotone} gives
\(\Phi_-\le u_h\le\Phi_+\).  This proves
\eqref{eq:bc-main-sandwich}; in particular, at every grid point,
\[
|u-u_h|\le\Phi_+-\Phi_-=2K_{\rm bar}\zeta.
\]
\end{proof}
	
	\subsection{Mollified scalar and switching comparison functions}
	\label{subsec:quantitative-mollification}
	
	The scalar regularization will bound \(u-u_h\); the switching
	regularization will bound \(u_h-u\).  Choose a nonnegative function
	\(\rho\in C_c^\infty((0,1)\times B_1(0))\) satisfying
	\[
	\int_0^1\int_{B_1(0)}\rho(s,y)\,dy\,ds=1,
	\]
	and set
	\begin{equation}
		\rho_\varepsilon(\vartheta,e)
		:=
		\varepsilon^{-N-2}
		\rho\!\left(
		\frac{\vartheta}{\varepsilon^2},
		\frac{e}{\varepsilon}
		\right).
		\label{eq:parabolic-mollifier}
	\end{equation}
Thus \(\rho_\varepsilon\) is nonnegative, has unit mass, and is
compactly supported in \(\Theta_\varepsilon\).
For \(V(s,x)=v(T-s,x)\), the shifted forward convolution in
\cite[equation~(4.12)]{PicarelliReisingerRotaetxe2020} becomes
\[
\int_{\Theta_\varepsilon}V(T-t+\varepsilon^2-\vartheta,x-e)
 \rho_\varepsilon(\vartheta,e)\,d\vartheta\,de
=\int_{\Theta_\varepsilon}v(t-\varepsilon^2+\vartheta,x-e)
 \rho_\varepsilon(\vartheta,e)\,d\vartheta\,de.
\]
The integration variables and the kernel are unchanged.

	For every sufficiently small \(\varepsilon>0\), let
	\(u^\varepsilon\) solve \eqref{eq:quantitative-scalar-shaken} and set
	\begin{equation}
		\phi_{\rm sc}^\varepsilon(t,x)
		:=
		\int_{\Theta_\varepsilon}
		u^\varepsilon(t-\varepsilon^2+\vartheta,x-e)
		\rho_\varepsilon(\vartheta,e)
		\,d\vartheta\,de,
		\qquad
		(t,x)\in[0,T]\times O_\varepsilon.
		\label{eq:scalar-backward-convolution}
	\end{equation}
	Choose \(\nu=\nu(\varepsilon)>0\) such that
	\begin{equation}
		\omega_A(\nu)\le\varepsilon^{1/3},
		\label{eq:quant-control-net-choice}
	\end{equation}
	and choose an indexed list \(a_1,\ldots,a_J\in A\), \(J\ge2\), whose
	range
	\[
	A_\nu:=\{a_m:1\le m\le J\}
	\]
	is a finite \(\nu\)-net of \(A\).  If this net consists of one point,
	repeat that point in the indexed list.  Set
	\begin{equation}
		k=16L_{\rm reg}\varepsilon,
		\label{eq:quant-switching-cost}
	\end{equation}
	and let \(v_m^{\varepsilon,k}\), \(m=1,\ldots,J\), be the components of
	the solution of \eqref{eq:quantitative-shaken-switching}.  For
	\((t,x)\in[0,T]\times O_\varepsilon\), set
	\begin{align}
		g_m^\varepsilon(t,x)
		&:=
		\int_{\Theta_\varepsilon}
		v_m^{\varepsilon,k}(t-\varepsilon^2+\vartheta,x-e)
		\rho_\varepsilon(\vartheta,e)
		\,d\vartheta\,de,
		\qquad m=1,\ldots,J,
		\label{eq:switching-backward-convolution}\\
		\phi_{\rm sw}^\varepsilon(t,x)
		&:=
		\min_{1\le m\le J}g_m^\varepsilon(t,x).
		\label{eq:quant-switching-envelope}
	\end{align}
	
To obtain a differential inequality with coefficients at \((t,x)\),
we use the convolution at \(t+\varepsilon^2\):
\[
\phi_{\rm sc}^\varepsilon(t+\varepsilon^2,x)
=\int_{\Theta_\varepsilon}u^\varepsilon(t+\vartheta,x-e)
 \rho_\varepsilon(\vartheta,e)\,d\vartheta\,de,\qquad
0\le t\le T-\varepsilon^2.
\]
The same identity holds for each switching component.  At a sampled point,
choosing the shake \((\theta,e)=(\vartheta,e)\) restores the coefficient
arguments \((t+\vartheta-\vartheta,x-e+e)=(t,x)\).
The terminal data stay at \(T\); Lemma~\ref{lem:discrete-terminal-modulus}
handles the remaining interval \([T-\varepsilon^2,T]\).

	\begin{proposition}[Mollified comparison functions]
		\label{prop:quantitative-comparison-functions}
		The functions \(\phi_{\rm sc}^\varepsilon\) and
		\(g_m^\varepsilon\), \(m=1,\ldots,J\), defined above belong to
		\(C^\infty([0,T]\times O_\varepsilon)\), and
		
		\begin{align}
			\|\phi_{\rm sc}^\varepsilon-u\|_
			{L^\infty([0,T]\times O_\varepsilon)}
			&\le C\varepsilon,
			\label{eq:quant-scalar-close}\\
			F^a[\phi_{\rm sc}^\varepsilon(\cdot+\varepsilon^2,\cdot)](t,x)
			&\ge0,
			\qquad
			a\in A,\quad
			(t,x)\in[0,T-\varepsilon^2]\times O_\varepsilon,
			\label{eq:quant-scalar-pde}\\
			\|\phi_{\rm sc}^\varepsilon(T)-\Psi\|_
			{L^\infty(O_\varepsilon)}
			&\le C\varepsilon.
			\label{eq:quant-scalar-terminal}
		\end{align}
		Although \(\phi_{\rm sw}^\varepsilon\) need not be smooth, it satisfies
		\begin{align}
			\|\phi_{\rm sw}^\varepsilon-u\|_
			{L^\infty([0,T]\times O_\varepsilon)}
			&\le C\varepsilon^{1/3},
			\label{eq:quant-switching-close}\\
			\|\phi_{\rm sw}^\varepsilon(T)-\Psi\|_
			{L^\infty(O_\varepsilon)}
			&\le C\varepsilon.
			\label{eq:quant-switching-terminal}
		\end{align}
		Moreover, for every
		\((t,x)\in[0,T-\varepsilon^2]\times O_\varepsilon\) and every index \(m_*\)
		satisfying
		\[
		m_*\in
		\operatorname*{argmin}_{1\le m\le J}
		g_m^\varepsilon(t+\varepsilon^2,x),
		\]
		the corresponding smooth component satisfies
		\begin{equation}
			F^{a_{m_*}}[g_{m_*}^\varepsilon(\cdot+\varepsilon^2,\cdot)](t,x)\le0.
			\label{eq:quant-switching-active}
		\end{equation}
		Finally, for \(1\le|\alpha|+2q\le6\),
		\begin{align}
			&\|D_x^\alpha\partial_t^q
			\phi_{\rm sc}^\varepsilon\|_
			{L^\infty([0,T]\times O_\varepsilon)}
			\notag\\
			&\qquad+
			\max_{1\le m\le J}
			\|D_x^\alpha\partial_t^q
			g_m^\varepsilon\|_
			{L^\infty([0,T]\times O_\varepsilon)}
			\le
			C_{\alpha,q}\varepsilon^{1-|\alpha|-2q}.
			\label{eq:quant-mollified-derivatives}
		\end{align}
		Here \(\alpha\) is a spatial multi-index, \(q\in\mathbb N_0\), and
		the constants are independent of \(\varepsilon\), \(\nu\), and \(J\).
	\end{proposition}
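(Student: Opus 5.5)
The plan follows the Krylov--Barles--Jakobsen template. All PDE input comes from Lemma~\ref{lem:quantitative-regularity} and the continuous-dependence estimates of \cite{PicarelliReisingerRotaetxe2020}. Smoothness of \(\phi_{\rm sc}^\varepsilon\) and \(g_m^\varepsilon\) on \([0,T]\times O_\varepsilon\) is immediate. The convolution \eqref{eq:scalar-backward-convolution} samples \(u^\varepsilon\) at times in \([t-\varepsilon^2,t]\subset[-\varepsilon^2,T]\) and at points \(x-e\) with \(|e|\le\varepsilon\), hence in \(\overline O\), so the kernel can be differentiated. For \eqref{eq:quant-mollified-derivatives}, place one derivative on the Lipschitz/\(\tfrac12\)-H\"older function, using the uniform bounds \eqref{eq:quantitative-shaken-uniform}--\eqref{eq:quantitative-switching-uniform}. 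Place the remaining derivatives on \(\rho_\varepsilon\), with the parabolic scaling \(\partial_t\sim\varepsilon^{-2}\), \(D_x\sim\varepsilon^{-1}\). The one derivative absorbed by \(u^\varepsilon\) or \(v_m^{\varepsilon,k}\) costs only \(L_{\rm reg}\) and gains the factor \(\varepsilon\). More precisely, write \(D^\alpha\partial_t^q\) of the convolution as a convolution of \(w(\cdot)-w(t,x)\) against the derivative of the kernel, which has zero mass. The bound \(|w(\cdot)-w(t,x)|\le CL_{\rm reg}\varepsilon\) on the support then gives \(C\varepsilon\cdot\varepsilon^{-|\alpha|-2q}\). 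The constants depend only on \(\rho\) and \(L_{\rm reg}\), hence not on \(\nu\) or \(J\).

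For the closeness estimates I would split \(\phi_{\rm sc}^\varepsilon-u=(\phi_{\rm sc}^\varepsilon-u^\varepsilon)+(u^\varepsilon-u)\). The first term is \(O(\varepsilon)\) by the same regularity. The second is \(O(\varepsilon)\) by the continuous-dependence theorem \cite[Theorem~3.8]{PicarelliReisingerRotaetxe2020}, applied after time reversal on the common cylinder \([0,T]\times\overline O\). Its inputs are the coefficient distance \eqref{eq:auxiliary-coefficient-distance}, the equal terminal data, and boundary data \(\widehat G=G\) there by \eqref{eq:quantitative-extension-agreement}. The uniform barrier Lemma~\ref{lem:extended-cylinder-barrier} controls the contribution near the boundary. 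The switching case is analogous, except that \(v_m^{\varepsilon,k}-u\) picks up three further errors. Continuous dependence contributes \(O(\varepsilon)\). The switching cost \(k\) and the control discretization contribute \(O(k^{1/3}+\omega_A(\nu))\), by the switching-system estimate behind \cite[Lemma~4.8 and Section~4.4]{PicarelliReisingerRotaetxe2020}. The cost bound \(\|v_m-v_{m'}\|\le k\) also compares each component with the envelope. With \(k=16L_{\rm reg}\varepsilon\) and \eqref{eq:quant-control-net-choice}, the total is \(C\varepsilon^{1/3}\). The terminal bounds \eqref{eq:quant-scalar-terminal} and \eqref{eq:quant-switching-terminal} follow because the convolution at \(t=T\) samples times in \([T-\varepsilon^2,T]\) and points within \(\varepsilon\). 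There the functions differ from \(\Psi(x)\) by at most \(L_{\rm reg}(\varepsilon+\varepsilon)\), using the time-\(\tfrac12\)-H\"older bound and \(v(T)=\Psi\).

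The differential inequalities use Krylov's shaking. Fix \((t,x)\in[0,T-\varepsilon^2]\times O_\varepsilon\) and \((\vartheta,e)\in\Theta_\varepsilon\). The point \((t+\vartheta,x-e)\) lies in \([0,T)\times O\) except on a negligible set. At such a point, \(u^\varepsilon\) is a viscosity supersolution (in the sense \(\inf\le0\)) and also a subsolution. The subsolution property says \(F_{\theta,e'}^a[u^\varepsilon]\ge0\) for all \(a\) and all shakes \((\theta,e')\). Choosing \((\theta,e')=(\vartheta,e)\) puts the coefficients at \((t,x)\). So \(w_{\vartheta,e}(t,x):=u^\varepsilon(t+\vartheta,x-e)\) is a viscosity subsolution of the linear constant-in-shake equation \(F^a[w]\ge0\) near \((t,x)\), for every \(a\). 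Since \(F^a\) is linear with coefficients independent of \((\vartheta,e)\), the average of these subsolutions against \(\rho_\varepsilon\) is again a subsolution, and it is smooth. This gives \eqref{eq:quant-scalar-pde}. The averaging step is \emph{the main obstacle}, because a convex combination of viscosity subsolutions of a linear equation is a subsolution only after a justification. I would supply it through the standard Riemann-sum argument: approximate the integral by finite convex combinations, each a viscosity subsolution of a linear uniformly-Lipschitz-coefficient equation, using the sup-convolution route of \cite{BarlesJakobsen2007}. The stability of viscosity solutions under uniform limits then finishes the argument.

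For \eqref{eq:quant-switching-active}, let \(m_*\) attain the minimum of \(g_m^\varepsilon(t+\varepsilon^2,x)\). Following \cite{BarlesJakobsen2007}, the obstacle with \(k>0\) is inactive for \(v_{m_*}\) on the support of the kernel. The bound \(|v_m(z)-v_m(z')|\le 2L_{\rm reg}\varepsilon\) on the support gives \(v_{m_*}<\min_{j\ne m_*}(v_j+k)\) there, since \(k=16L_{\rm reg}\varepsilon\) exceeds the oscillation. Hence \(-\sup_{\theta,e}F_{\theta,e}^{a_{m_*}}[v_{m_*}]\ge0\), that is, \(F^{a_{m_*}}_{\theta,e}[v_{m_*}]\le0\) for every shake. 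The same shake choice and the averaging argument then give the supersolution inequality for \(g_{m_*}^\varepsilon(\cdot+\varepsilon^2,\cdot)\).
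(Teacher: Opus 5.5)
Your proposal is correct and follows essentially the same route as the paper. It uses zero-mass differentiated kernels for \eqref{eq:quant-mollified-derivatives}, continuous dependence together with the switching-cost and control-net estimates for the closeness bounds, and the shake choice \((\theta,e)=(\vartheta,e)\) with Riemann-sum averaging for the differential inequalities, where in the switching case the obstacle is inactive for the minimizing component on a neighbourhood of the sampled points. The only minor variation is in the scalar closeness estimate: you split through \(u^\varepsilon(t,x)\), whereas the paper compares each sampled value with \(u\) through time zero, and your split is slightly more direct.
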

	
	\begin{proof}
The compactly supported kernel samples
\(-\varepsilon^2<t-\varepsilon^2+\vartheta<T\) and \(x-e\in O\);
hence the convolutions are smooth on the stated domain.
We verify their differential signs and approximation errors, then the
derivative and terminal bounds needed for consistency and comparison.

\smallskip
\noindent\emph{Scalar function.}
For fixed \(a\) and \((\vartheta,e)\in\operatorname{supp}\rho_\varepsilon\),
the shaken equation gives, in the viscosity sense,
\begin{equation}
F^a[u^\varepsilon(\,\cdot+\vartheta,\cdot-e)](t,x)
=F_{\vartheta,e}^a[u^\varepsilon](t+\vartheta,x-e)\ge0,
\qquad 0\le t\le T-\varepsilon^2.
\label{eq:scalar-translate-inequality}
\end{equation}
Every translated function thus satisfies the same linear inequality.
Finite convex combinations and local uniform limits preserve this
viscosity inequality.  Positive Riemann sums for the convolution give
\eqref{eq:quant-scalar-pde}, classically by smoothness, as in
\cite[Theorem~4.7]{PicarelliReisingerRotaetxe2020}.

For approximation, index the original and shaken equations by
\(A\times\Theta_\varepsilon\).  The original coefficients are independent
of the shift; the two coefficient tuples differ by \(C\varepsilon\),
by \eqref{eq:auxiliary-coefficient-distance}, and their lateral and
terminal data agree on the physical cylinder.  Continuous dependence
\cite[Theorem~3.8 and Proposition~3.10]{PicarelliReisingerRotaetxe2020}
therefore gives
\begin{equation}
\|u^\varepsilon-u\|_{L^\infty([0,T]\times\overline O)}
\le C\varepsilon.
\label{eq:quant-scalar-auxiliary-close}
\end{equation}
When \(t-\varepsilon^2+\vartheta\ge0\), this bound and the regularity
of \(u\) control each sampled difference by \(C\varepsilon\).
When \(t-\varepsilon^2+\vartheta<0\), compare through time zero:
\[
\begin{aligned}
&|u^\varepsilon(t-\varepsilon^2+\vartheta,x-e)-u(t,x)|\\
&\le |u^\varepsilon(t-\varepsilon^2+\vartheta,x-e)-u^\varepsilon(0,x-e)|
 +|u^\varepsilon(0,x-e)-u(0,x-e)|\\
&\quad+|u(0,x-e)-u(t,x)|\le C\varepsilon.
\end{aligned}
\]
Here \(-\varepsilon^2\le t-\varepsilon^2+\vartheta<0\) and
\(0\le t<\varepsilon^2\), so both time increments are controlled by
Lemma~\ref{lem:quantitative-regularity}.
Averaging with the unit-mass kernel proves \eqref{eq:quant-scalar-close}.

\smallskip
\noindent\emph{Switching function.}
Let \(u_\nu\) solve the HJB equation with the finite control set \(A_\nu\).
Index its coefficients and those of \(u\) by
\[
\widehat A_\nu=\{(a,b)\in A\times A_\nu:d_A(a,b)\le\nu\}.
\]
Both projections are surjective, so the operators are unchanged.
By \eqref{eq:extended-control-modulus}, the paired coefficients differ
by at most \(C\omega_A(\nu)\); continuous dependence gives
\(\|u_\nu-u\|_\infty\le C\omega_A(\nu)\).
Comparison with the unshaken switching system for the same controls and
cost gives an error \(C\varepsilon\)
\cite[Lemma~4.8]{PicarelliReisingerRotaetxe2020}, and its difference
from \(u_\nu\) is bounded by \(Ck^{1/3}\)
\cite[Theorem~3.11]{PicarelliReisingerRotaetxe2020}.
Together with the obstacle inequalities, these estimates yield
\begin{align}
\max_{i,j}\|v_i^{\varepsilon,k}-v_j^{\varepsilon,k}\|_
 {L^\infty([-2\varepsilon^2,T]\times\overline O)}
&\le k,
\label{eq:quant-switching-component-gap}\\
\max_m\|v_m^{\varepsilon,k}-u\|_
 {L^\infty([0,T]\times\overline O)}
&\le C\bigl(\varepsilon+k^{1/3}+\omega_A(\nu)\bigr).
\label{eq:quant-switching-components-close}
\end{align}
The sampled points are within parabolic distance \(2\varepsilon\)
of \((t,x)\), so
\begin{equation}
|g_m^\varepsilon(t,x)-v_m^{\varepsilon,k}(t,x)|
\le2L_{\rm reg}\varepsilon.
\label{eq:quant-switching-mollification-gap}
\end{equation}
Using \eqref{eq:quant-control-net-choice} and
\eqref{eq:quant-switching-cost} gives
\(\max_m\|g_m^\varepsilon-u\|_\infty\le C\varepsilon^{1/3}\).
Taking the pointwise minimum proves \eqref{eq:quant-switching-close}.

To obtain the opposite differential sign, fix
\((t,x)\in[0,T-\varepsilon^2]\times O_\varepsilon\) and choose
\(m_*\in\operatorname*{argmin}_m g_m^\varepsilon(t+\varepsilon^2,x)\).
We keep this one component throughout the convolution.
The regularity estimate gives
\[
|g_m^\varepsilon(t+\varepsilon^2,x)-v_m^{\varepsilon,k}(t,x)|
\le2L_{\rm reg}\varepsilon,
\qquad
v_{m_*}^{\varepsilon,k}(t,x)-v_j^{\varepsilon,k}(t,x)
\le4L_{\rm reg}\varepsilon\quad(j\ne m_*).
\]
Applying the same regularity bound to both components at each sampled
point, and using \(k=16L_{\rm reg}\varepsilon\), gives
\[
v_{m_*}^{\varepsilon,k}(t+\vartheta,x-e)
-v_j^{\varepsilon,k}(t+\vartheta,x-e)-k
\le8L_{\rm reg}\varepsilon-k=-8L_{\rm reg}\varepsilon<0.
\]
The obstacle is therefore inactive on a neighborhood of the compact
set of sampled points.  The differential branch gives there
\(F_{\theta,e'}^{a_{m_*}}[v_{m_*}^{\varepsilon,k}]\le0\)
for every \((\theta,e')\in\Theta_\varepsilon\).
Choose \((\theta,e')=(\vartheta,e)\), translate, and average as in the
scalar argument.  This proves
\eqref{eq:quant-switching-active} for the smooth component
\(g_{m_*}^\varepsilon\), without differentiating the minimum.

\smallskip
\noindent\emph{Derivative and terminal estimates.}
For \(1\le|\alpha|+2q\le6\), differentiate the kernel, or equivalently
integrate by parts in the distributional sense:
\begin{equation}
D_x^\alpha\partial_t^qg_m^\varepsilon(t,x)
=(-1)^q\int_{\Theta_\varepsilon}
 v_m^{\varepsilon,k}(t-\varepsilon^2+\vartheta,x-e)
 D_e^\alpha\partial_\vartheta^q\rho_\varepsilon\,d\vartheta\,de.
\label{eq:quant-convolution-derivative}
\end{equation}
Each time derivative contributes a minus sign; the spatial signs
cancel.  The differentiated kernel has zero integral, so
\begin{equation}
\begin{aligned}
D_x^\alpha\partial_t^qg_m^\varepsilon(t,x)
=(-1)^q\int_{\Theta_\varepsilon}
 &[v_m^{\varepsilon,k}(t-\varepsilon^2+\vartheta,x-e)
      -v_m^{\varepsilon,k}(t,x)]\\
&\hspace{8mm}\cdot D_e^\alpha\partial_\vartheta^q
 \rho_\varepsilon\,d\vartheta\,de.
\end{aligned}
\label{eq:quant-convolution-cancellation}
\end{equation}
Regularity and kernel scaling give, respectively,
\begin{align}
|v_m^{\varepsilon,k}(t-\varepsilon^2+\vartheta,x-e)
      -v_m^{\varepsilon,k}(t,x)|
&\le2L_{\rm reg}\varepsilon,
\label{eq:quant-convolution-increment}\\
\|D_e^\alpha\partial_\vartheta^q\rho_\varepsilon\|_{L^1(\Theta_\varepsilon)}
&\le C_{\alpha,q}\varepsilon^{-|\alpha|-2q}.
\label{eq:quant-convolution-kernel}
\end{align}
Their product is \(C_{\alpha,q}\varepsilon^{1-|\alpha|-2q}\).
The identical calculation for \(u^\varepsilon\) proves
\eqref{eq:quant-mollified-derivatives}.
At \(t=T\), both auxiliary problems equal \(\Psi\), so the same
regularity estimate gives
\[
|\phi_{\rm sc}^\varepsilon(T,x)-\Psi(x)|
+\max_m|g_m^\varepsilon(T,x)-\Psi(x)|
\le C\int_{\Theta_\varepsilon}
 (\sqrt{\varepsilon^2-\vartheta}+|e|)\rho_\varepsilon\,d\vartheta\,de
\le C\varepsilon.
\]
Taking the minimum preserves this bound, proving both terminal estimates.
\end{proof}

	\subsection{Localized comparison and the global estimate}
	
	The next lemma turns an interior residual bound into a global bound
	on the comparison interval.  The discrepancies on its final layer and
	in the boundary strip are included in \(B_\delta^\pm\).  For \(\delta>0\), set
	\[
	I_\delta
	:=\{i:x_i\in O,\ \operatorname{dist}(x_i,\partial O)\ge\delta\},
	\qquad
	\mathcal C_\delta
	:=\{i:\operatorname{dist}(x_i,\partial O)<\delta\}.
	\]
	Fix an integer \(N_*\) with \(1\le N_*\le N_\dt\).
For a grid function \(W\) on the layers \(0,\ldots,N_*\), define
	\begin{align}
		B_\delta^+(W)
		&:=\max\left\{
		\max_i(u_{h,i}^{N_*}-W_i^{N_*})^+,
		\max_{\substack{0\le n<N_*\\i\in\mathcal C_\delta}}
		(u_{h,i}^n-W_i^n)^+\right\},
		\label{eq:bc-local-B-plus}\\
		B_\delta^-(W)
		&:=\max\left\{
		\max_i(W_i^{N_*}-u_{h,i}^{N_*})^+,
		\max_{\substack{0\le n<N_*\\i\in\mathcal C_\delta}}
		(W_i^n-u_{h,i}^n)^+\right\}.
		\label{eq:bc-local-B-minus}
	\end{align}
	
	\begin{lemma}[Localized discrete comparison]
		\label{lem:bc-local-comparison}
		Let \(E\ge0\).  If
		\begin{equation}
			\mathcal S_{h,n,i}[W_i^n,W^{n+1}]\le E
			\qquad (0\le n<N_*,\ i\in I_\delta),
			\label{eq:bc-local-super}
		\end{equation}
		then
		\begin{equation}
			\max_{\substack{0\le n\le N_*\\i\in I_\delta}}
			(u_{h,i}^n-W_i^n)^+
			\le B_\delta^+(W)+TE.
			\label{eq:bc-local-upper}
		\end{equation}
		If
		\begin{equation}
			\mathcal S_{h,n,i}[W_i^n,W^{n+1}]\ge-E
			\qquad (0\le n<N_*,\ i\in I_\delta),
			\label{eq:bc-local-sub}
		\end{equation}
		then
		\begin{equation}
			\max_{\substack{0\le n\le N_*\\i\in I_\delta}}
			(W_i^n-u_{h,i}^n)^+
			\le B_\delta^-(W)+TE.
			\label{eq:bc-local-lower}
		\end{equation}
	\end{lemma}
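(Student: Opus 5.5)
We prove the upper bound \eqref{eq:bc-local-upper} by a discrete maximum-principle argument on the difference \(e_i^n:=u_{h,i}^n-W_i^n\), proceeding backward in time from layer \(N_*\); the lower bound \eqref{eq:bc-local-lower} then follows by the symmetric argument with the roles of \(u_h\) and \(W\) exchanged and all inequalities reversed. The structural facts used are: the numerical update \(u_{h,i}^n=\inf_aS_{n,i}^a[u_h^{n+1}]\) at interior nodes; the monotonicity and nonexpansiveness of each fixed-control row \(S_{n,i}^a\) (Proposition~\ref{prop:bounded-monotone}), which is affine with nonnegative coefficients summing to at most one; and the bound \(\tau_{n,i}(a)\le\dt\) from Lemma~\ref{lem:effective-time-lower-bound}.

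The key one-step claim is
\[
M^n:=\max_i(e_i^n)^+\le\max\bigl\{M^{n+1},\ \max_{i\in\mathcal C_\delta}(e_i^n)^+\bigr\}+\dt E .
\]
Every node lies either in \(\mathcal C_\delta\), where the second term covers it directly (this includes lateral boundary nodes, which have distance zero), or in \(I_\delta\). For \(i\in I_\delta\), fix \(\eta>0\). By \eqref{eq:bc-local-super} there is a control \(a\) with \(S_{n,i}^a[W^{n+1}]-W_i^n\le\tau_{n,i}(a)(E+\eta)\le\dt(E+\eta)\). Since \(u_{h,i}^n\le S_{n,i}^a[u_h^{n+1}]\), we get
\[
e_i^n\le S_{n,i}^a[u_h^{n+1}]-S_{n,i}^a[W^{n+1}]+\dt(E+\eta).
\]
Because \(S^a\) is affine with nonnegative coefficients summing to at most one, the difference \(S^a[u]-S^a[W]\) equals a nonnegative combination of \(u_h^{n+1}-W^{n+1}\) with total mass at most one. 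The boundary-data and source terms are identical in both rows and cancel. Hence this difference is at most \(M^{n+1}\), using \(M^{n+1}\ge0\) when the total mass is below one. Letting \(\eta\to0\) proves the claim.

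Iterating from \(n=N_*\), where \(M^{N_*}\le B_\delta^+(W)\), over at most \(N_*\le N_\dt\) steps accumulates \(N_*\dt E\le TE\). The boundary-strip maxima are each bounded by \(B_\delta^+(W)\), which gives \eqref{eq:bc-local-upper} on all nodes, in particular on \(I_\delta\). For the lower bound, \eqref{eq:bc-local-sub} gives \(S_{n,i}^a[W^{n+1}]-W_i^n\ge-\tau E\ge-\dt E\) for every control \(a\). Choosing \(a\) nearly optimal for \(u_h\), so that \(u_{h,i}^n\ge S_{n,i}^a[u_h^{n+1}]-\eta\), yields \(W_i^n-u_{h,i}^n\le S^a[W^{n+1}]-S^a[u_h^{n+1}]+\dt E+\eta\), and the same iteration applies.

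The only delicate point is the role of the denominator \(\tau_{n,i}(a)\). In the upper bound the residual condition must be converted into a row inequality for a near-optimal control, and this conversion uses \(\tau_{n,i}(a)\le\dt\) rather than any lower bound on \(\tau\). The same bound suffices in the lower bound, so the estimate is uniform even for rows where the effective time is very small.
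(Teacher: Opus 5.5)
Your proof is correct and follows essentially the same route as the paper's. Both run a backward induction in which the residual condition is turned into a row inequality using only \(\tau_{n,i}(a)\le\dt\). In both, the source and prescribed boundary values cancel, so the row difference is a nonnegative combination of next-layer discrepancies with total mass at most one, and the collar nodes and layer \(N_*\) are absorbed into \(B_\delta^\pm(W)\).

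The only difference is cosmetic. The paper first states the nonexpansive bound for the infimum operator \(S_{n,i}\), restricted to the vertices actually used. You instead work with a single near-optimal control in the fixed-control row.
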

	
	\begin{proof}
Let \(\mathcal V_{n,i}\) contain all next-layer vertices used with
positive weight by this row for some control.  For fixed \(a\), the
source and prescribed boundary values cancel in
\(S_{n,i}^a[U]-S_{n,i}^a[V]\).  The remaining interpolation coefficients
are nonnegative and have total mass at most one.  Thus
\begin{equation}
S_{n,i}[U]-S_{n,i}[V]
\le\max_{\ell\in\mathcal V_{n,i}}(U_\ell-V_\ell)^+,
\label{eq:bc-local-nonexpansive}
\end{equation}
where an empty maximum is zero.  Here passage to the infimum uses
\(\inf_aX_a-\inf_aY_a\le\sup_a(X_a-Y_a)\).

Under \eqref{eq:bc-local-super}, choose \(a_m\) such that
\[
\mathcal R^{a_m}_{h,n,i}[W_i^n,W^{n+1}]\le E+m^{-1}.
\]
Since \(\tau_{n,i}(a_m)\le\dt\),
\[
S_{n,i}[W^{n+1}]
\le S_{n,i}^{a_m}[W^{n+1}]
\le W_i^n+\dt(E+m^{-1}).
\]
Letting \(m\to\infty\) gives
\(S_{n,i}[W^{n+1}]\le W_i^n+\dt E\).
Under \eqref{eq:bc-local-sub}, every control satisfies
\[
S_{n,i}^a[W^{n+1}]
\ge W_i^n-\tau_{n,i}(a)E\ge W_i^n-\dt E,
\]
so \(S_{n,i}[W^{n+1}]\ge W_i^n-\dt E\).
Combining these bounds with \(u_{h,i}^n=S_{n,i}[u_h^{n+1}]\)
and \eqref{eq:bc-local-nonexpansive} gives, respectively,
\begin{align*}
(u_{h,i}^n-W_i^n)^+
&\le\max_{\ell\in\mathcal V_{n,i}}
 (u_{h,\ell}^{n+1}-W_\ell^{n+1})^++\dt E,\\
(W_i^n-u_{h,i}^n)^+
&\le\max_{\ell\in\mathcal V_{n,i}}
 (W_\ell^{n+1}-u_{h,\ell}^{n+1})^++\dt E.
\end{align*}
At collar vertices and on layer \(N_*\), the corresponding discrepancy
is already bounded by \(B_\delta^\pm(W)\).
Backward induction therefore bounds it on layer \(n\) by
\(B_\delta^\pm(W)+(N_*-n)\dt E\).
Since \((N_*-n)\dt\le T\), both conclusions follow.
\end{proof}
	Fix \(C_{\rm con}\) to be a uniform upper bound for the constant in
	Theorem~\ref{thm:main-high-order-consistency}, for both families of
	regularized functions constructed in
	Proposition~\ref{prop:quantitative-comparison-functions}.  Define the
	interior consistency error per unit effective time
	\begin{equation}
		\mathfrak E_h(\varepsilon)
		:=C_{\rm con}\left(
		\dt\varepsilon^{-3}+\dt^2\varepsilon^{-5}
		+\frac{h^2}{\dt}\varepsilon^{-1}\right).
		\label{eq:bc-consistency-error}
	\end{equation}
	
	\begin{theorem}[Error bound for the stopped scheme]
		\label{thm:bc-rate}
		Assume Assumptions~\ref{ass:bounded} and
		\ref{ass:quantitative-boundary}, the mesh hypotheses
		\textup{(G1)}--\textup{(G4)} together with
		\eqref{eq:Isp}--\eqref{eq:Isp-error}, and one fixed centrally
		symmetric positive cubature.  Let \(u\) be the unique bounded
		continuous Dirichlet solution from
		Lemma~\ref{lem:quantitative-regularity}, and let \(u_h\) be defined by
		the stopped scheme \eqref{eq:bounded-scheme}.  Suppose that
		\[
		h\to0,\qquad
		\dt\to0,\qquad
		q_h:=\frac{h^2}{\dt}\to0.
		\]
		Then, for all sufficiently small meshes,
		\begin{align}
			\|(u-u_h)^+\|_{\ell^\infty(G_{h,\dt})}
			&\le
			C\left(
			\dt^{1/4}+\frac{h}{\sqrt{\dt}}
			\right),
			\label{eq:bc-main-upper-rate}\\
			\|(u_h-u)^+\|_{\ell^\infty(G_{h,\dt})}
			&\le
			C\left(
			\dt^{1/10}
			+h^{1/2}\dt^{-1/4}
			\right).
			\label{eq:bc-main-lower-rate}
		\end{align}
		Here \(C\) is independent of \(h\) and \(\dt\).
		In particular, if \(\dt\asymp h^{10/7}\), then
		\begin{equation}
			\|(u-u_h)^+\|_{\ell^\infty(G_{h,\dt})}
			=O(h^{2/7}),
			\qquad
			\|(u_h-u)^+\|_{\ell^\infty(G_{h,\dt})}
			=O(h^{1/7}),
			\qquad
			\|u_h-u\|_{\ell^\infty(G_{h,\dt})}
			=O(h^{1/7}).
			\label{eq:bc-main-final-rate}
		\end{equation}
	\end{theorem}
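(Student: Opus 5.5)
The plan is to prove the two one-sided bounds separately by localized discrete comparison (Lemma~\ref{lem:bc-local-comparison}), with the scalar function \(\phi_{\rm sc}^\varepsilon\) for \eqref{eq:bc-main-upper-rate} and the switching envelope \(\phi_{\rm sw}^\varepsilon\) for \eqref{eq:bc-main-lower-rate}. The boundary strip is handled by the barrier sandwich of Proposition~\ref{prop:bc-main-barrier}, and the layers near \(T\) by Lemma~\ref{lem:discrete-terminal-modulus}. The smoothing scale \(\varepsilon\) is optimized differently in the two cases.

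\emph{Setup.} Since \(q_h\to0\) and \(\dt\to0\), condition \eqref{eq:bc-main-bar-small} holds for small meshes. Hence \(|u-u_h|\le 2K_{\rm bar}\zeta\) at grid points. Because \(\zeta\in C^{2,\gamma}\) vanishes on the lateral boundary, \(\zeta(t,x)\le C\operatorname{dist}(x,\partial O)\).

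Take the collar width
\[
\delta=2\varepsilon+B_{\mathcal C}\sqrt\dt+C_{\rm patch}h ,
\]
which is exactly the clearance \eqref{eq:main-high-order-collars}. On \(\mathcal C_\delta\) the error is then at most \(C\delta\le C(\varepsilon+\sqrt\dt+h)\). Let \(N_*\) be the largest index with \(t_{N_*}\le T-\varepsilon^2\), and use the shifted comparison function \(W_i^n=\phi(t_n+\varepsilon^2,x_i)\).

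On layer \(N_*\), write \(u_h-W\) as \((u_h-\Psi)+(\Psi-\phi(T))+(\phi(T)-\phi(t_{N_*}+\varepsilon^2))\). The first term is bounded by \(C(\varepsilon+\sqrt\dt)\) via Lemma~\ref{lem:discrete-terminal-modulus}, and I expect that lemma to give a bound of order \(\sqrt{T-t_n}\) plus mesh terms. The second term is bounded by \(C\varepsilon\) via \eqref{eq:quant-scalar-terminal} and \eqref{eq:quant-switching-terminal}. The third term is bounded by \(C\varepsilon\), using the time-derivative bound in \eqref{eq:quant-mollified-derivatives} when \(\phi=\phi_{\rm sc}^\varepsilon\), and the \(C\varepsilon\) time regularity of each component \(g_m^\varepsilon\) (which passes to their minimum) when \(\phi=\phi_{\rm sw}^\varepsilon\). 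This gives \(B_\delta^\pm\le C(\varepsilon+\sqrt\dt+h)\). For layers \(n>N_*\), \(u\) and \(u_h\) are both within \(C(\varepsilon+\sqrt\dt)\) of \(\Psi\), so those layers are covered directly.

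\emph{Upper bound for \(u-u_h\).} For \(i\in I_\delta\), Theorem~\ref{thm:main-high-order-consistency} applies to \(\phi_{\rm sc}^\varepsilon(\cdot+\varepsilon^2,\cdot)\) by \eqref{eq:quant-mollified-derivatives}; all sampled vertices lie in \(O_\varepsilon\). Combined with \eqref{eq:quant-scalar-pde}, it gives \(\mathcal S_{h,n,i}[W_i^n,W^{n+1}]\ge-\mathfrak E_h(\varepsilon)\). Lemma~\ref{lem:bc-local-comparison} then yields \((W-u_h)^+\le B_\delta^-+T\mathfrak E_h(\varepsilon)\). Adding \(\|W-u\|\le C\varepsilon\), from \eqref{eq:quant-scalar-close} and the \(\varepsilon^2\) time shift, gives
\[
u-u_h\le C\Bigl(\varepsilon+\sqrt\dt+h+\dt\varepsilon^{-3}+\dt^2\varepsilon^{-5}+\tfrac{h^2}{\dt\varepsilon}\Bigr).
\]
Choose \(\varepsilon=\max\{\dt^{1/4},h\dt^{-1/2}\}\), which tends to \(0\) because \(q_h\to0\). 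Then \(\dt\varepsilon^{-3}\le\dt^{1/4}\), \(\dt^2\varepsilon^{-5}\le\dt^{3/4}\), and \(h^2/(\dt\varepsilon)\le h\dt^{-1/2}\). The remaining terms \(\sqrt\dt\) and \(h\le h\dt^{-1/2}\) are dominated, which gives \eqref{eq:bc-main-upper-rate}.

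\emph{Lower bound for \(u_h-u\).} Take \(W=\phi_{\rm sw}^\varepsilon(\cdot+\varepsilon^2,\cdot)\) and fix \((n,i)\) with \(i\in I_\delta\). Choose \(m_*\) in the argmin at \((t_n+\varepsilon^2,x_i)\) and set \(g=g_{m_*}^\varepsilon(\cdot+\varepsilon^2,\cdot)\). Then \(W\le g\) everywhere and \(W_i^n=g(t_n,x_i)\). For the fixed control \(a_{m_*}\), monotonicity of the row gives
\[
\mathcal S_{h,n,i}[W_i^n,W^{n+1}]\le\mathcal R^{a_{m_*}}_{h,n,i}[g(t_n,x_i),g(t_{n+1},\cdot)] .
\]
Theorem~\ref{thm:main-high-order-consistency} applied to the smooth \(g\), together with \eqref{eq:quant-switching-active}, bounds the right side by \(\mathfrak E_h(\varepsilon)\). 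This device of passing to the active smooth component, so that the nonsmooth minimum is never differentiated, is the step I consider most delicate. One must check that the argmin is taken at the node being updated and that only the next-layer values enter through a monotone row.

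Lemma~\ref{lem:bc-local-comparison} together with \eqref{eq:quant-switching-close} then gives
\[
u_h-u\le C\bigl(\varepsilon^{1/3}+\sqrt\dt+h+\mathfrak E_h(\varepsilon)\bigr).
\]
Choose \(\varepsilon=\max\{\dt^{3/10},(h^2/\dt)^{3/4}\}\). Then \(\dt\varepsilon^{-3}\le\dt^{1/10}\), \(\dt^2\varepsilon^{-5}\le\dt^{1/2}\), and \(h^2/(\dt\varepsilon)\le\varepsilon^{1/3}\). Also \(\varepsilon^{1/3}=\max\{\dt^{1/10},h^{1/2}\dt^{-1/4}\}\), which gives \eqref{eq:bc-main-lower-rate}.

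Finally, substituting \(\dt\asymp h^{10/7}\) gives \(\dt^{1/10}=h^{1/7}\) and \(h^{1/2}\dt^{-1/4}=h^{1/7}\) for the lower bound, and \(\dt^{1/4}=h^{5/14}\le h^{2/7}=h\dt^{-1/2}\) for the upper bound. This proves \eqref{eq:bc-main-final-rate}.
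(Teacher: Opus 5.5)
Your proposal is correct and follows essentially the same route as the paper: the same smoothing scales, a boundary collar controlled by the barrier sandwich, the terminal layers handled by Lemma~\ref{lem:discrete-terminal-modulus}, and the same active-component device, so that the switching envelope is never differentiated. Two details need tidying. First, \(W\) must also be defined at nodes within \(\varepsilon\) of \(\partial O\), where the mollification is unavailable; the paper sets \(W=u_h\) there. Second, for the switching function the collar mismatch in \(B_\delta^+\) is only \(C(\delta+\varepsilon^{1/3})\), not \(C(\varepsilon+\sqrt\dt+h)\), but your final bound already absorbs this.
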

	
	\begin{proof}
The scalar and switching comparisons have approximation errors
\(C\varepsilon\) and \(C\varepsilon^{1/3}\), respectively.
We balance these with \(\mathfrak E_h(\varepsilon)\) and reserve a
boundary strip containing every branch and interpolation vertex.
Fix \(C_{\rm sc},C_{\rm sw}\ge1\),
\(C_{\rm col}\ge2+B_{\mathcal C}+C_{\rm patch}\), and set
\begin{align}
\varepsilon_{\rm sc}&=C_{\rm sc}\max\{\dt^{1/4},q_h^{1/2}\},
&\delta_{\rm sc}&=C_{\rm col}\varepsilon_{\rm sc},
\label{eq:bc-scalar-scale}\\
\varepsilon_{\rm sw}&=C_{\rm sw}\max\{\dt^{3/10},q_h^{3/4}\},
&\delta_{\rm sw}&=C_{\rm col}\varepsilon_{\rm sw}.
\label{eq:bc-switching-scale}
\end{align}
Both smoothing scales tend to zero.  Their definitions give
\begin{equation}
\dt\le\varepsilon_{\rm sc}^4,\quad q_h\le\varepsilon_{\rm sc}^2,
\qquad
\dt\le\varepsilon_{\rm sw}^{10/3},\quad
q_h\le\varepsilon_{\rm sw}^{4/3}.
\label{eq:bc-scale-powers}
\end{equation}
For sufficiently small meshes the scales are at most one.  Consequently,
\(\sqrt\dt\le\varepsilon_{\rm sc}^2\le\varepsilon_{\rm sc}\) and
\(h=\sqrt{q_h\dt}\le\varepsilon_{\rm sc}^3\le\varepsilon_{\rm sc}\);
likewise \(\sqrt\dt\le\varepsilon_{\rm sw}^{5/3}\le\varepsilon_{\rm sw}\)
and \(h\le\varepsilon_{\rm sw}^{7/3}\le\varepsilon_{\rm sw}\).
Our choice of \(C_{\rm col}\) therefore ensures
\begin{align}
\delta_{\rm sc}-2\varepsilon_{\rm sc}
&\ge B_{\mathcal C}\sqrt\dt+C_{\rm patch}h,
\label{eq:bc-upper-collar}\\
\delta_{\rm sw}-2\varepsilon_{\rm sw}
&\ge B_{\mathcal C}\sqrt\dt+C_{\rm patch}h.
\label{eq:bc-lower-collar}
\end{align}
The barrier smallness condition \eqref{eq:bc-main-bar-small} holds
because \(\dt^{\gamma/2}+q_h\to0\), and
Proposition~\ref{prop:quantitative-comparison-functions} applies at both
smoothing scales.

\smallskip
\noindent\emph{The final time layers.}
The shifted differential inequalities apply only up to \(T-\varepsilon^2\).
We first control the remaining layers and the endpoint of the comparison
interval.  For either smoothing scale, define
\[
N_\varepsilon:=\max\{n:t_n\le T-\varepsilon^2\}.
\]
For small meshes, \(\varepsilon^2<T/2\) and
\(\dt\le\varepsilon^2\), by \eqref{eq:bc-scale-powers}.  Therefore
\[
\varepsilon^2\le T-t_{N_\varepsilon}
<\varepsilon^2+\dt\le2\varepsilon^2.
\]
Lemma~\ref{lem:discrete-terminal-modulus} and the temporal
H\"older estimate \eqref{eq:quantitative-u-uniform} give
\[
\max_{N_\varepsilon\le n\le N_\dt}
\|u_h^n-u(t_n,\cdot)\|_{\ell^\infty(G_h)}\le C\varepsilon.
\]
For \(v=u^\varepsilon\) or \(v=v_m^{\varepsilon,k}\),
the terminal datum \(v(T)=\Psi\) and uniform regularity give
\[
\left|\int_{\Theta_\varepsilon}
 v(t_{N_\varepsilon}+\vartheta,x-e)\rho_\varepsilon
 \,d\vartheta\,de-\Psi(x)\right|\le C\varepsilon.
\]
Indeed, all sampled times lie between \(T-2\varepsilon^2\) and \(T\).
Combining this bound with Lemma~\ref{lem:discrete-terminal-modulus}
controls the comparison-function mismatch at the layer
\(N_\varepsilon\) by \(C\varepsilon\).

The estimates \eqref{eq:quant-scalar-close} and
\eqref{eq:quant-switching-close} remain valid when the mollified
function is evaluated at \(t+\varepsilon^2\) and compared with
\(u(t,x)\): the additional difference
\(|u(t+\varepsilon^2,x)-u(t,x)|\le C\varepsilon\)
is absorbed in their right-hand sides.
Write \(N_{\rm sc}=N_{\varepsilon_{\rm sc}}\) and
\(N_{\rm sw}=N_{\varepsilon_{\rm sw}}\).

\smallskip
\noindent\emph{The scalar estimate.}
For \(0\le n\le N_{\rm sc}\), define
\[
W_{{\rm sc},i}^n=
\begin{cases}
u_{h,i}^n,&\operatorname{dist}(x_i,\partial O)\le\varepsilon_{\rm sc},\\
\phi_{\rm sc}^{\varepsilon_{\rm sc}}(t_n+\varepsilon_{\rm sc}^2,x_i),
 &\operatorname{dist}(x_i,\partial O)>\varepsilon_{\rm sc}.
\end{cases}
\]
For \(0\le n<N_{\rm sc}\) and \(i\in I_{\delta_{\rm sc}}\), \eqref{eq:bc-upper-collar} keeps
the complete branch and all active interpolation vertices at least
\(2\varepsilon_{\rm sc}\) from the boundary.  Thus every value of
\(W_{\rm sc}\) used by this row is a value of
\(\phi_{\rm sc}^{\varepsilon_{\rm sc}}\).
Theorem~\ref{thm:main-high-order-consistency} and
\eqref{eq:quant-scalar-pde} give, for every control,
\[
\mathcal R_{h,n,i}^a[W_{{\rm sc},i}^n,W_{\rm sc}^{n+1}]
\ge F^a[\phi_{\rm sc}^{\varepsilon_{\rm sc}}(\cdot+\varepsilon_{\rm sc}^2,\cdot)](t_n,x_i)
 -\mathfrak E_h(\varepsilon_{\rm sc})
\ge-\mathfrak E_h(\varepsilon_{\rm sc}).
\]
Taking the infimum yields
\begin{equation}
\mathcal S_{h,n,i}[W_{{\rm sc},i}^n,W_{\rm sc}^{n+1}]
\ge-\mathfrak E_h(\varepsilon_{\rm sc}).
\label{eq:bc-upper-residual}
\end{equation}

On the collar, the mismatch vanishes when
\(\operatorname{dist}(x_i,\partial O)\le\varepsilon_{\rm sc}\).
At the remaining collar nodes,
\eqref{eq:bc-main-sandwich} and the time-shifted version of
\eqref{eq:quant-scalar-close} established above give
\[
W_{{\rm sc},i}^n-u_{h,i}^n
\le u+C\varepsilon_{\rm sc}-(G-K_{\rm bar}\zeta)
\le2K_{\rm bar}\zeta+C\varepsilon_{\rm sc}
\le C(\delta_{\rm sc}+\varepsilon_{\rm sc}).
\]
Here \(\zeta\le C\operatorname{dist}(x_i,\partial O)\) follows
from its spatial Lipschitz continuity and zero boundary trace.
The mismatch at the final comparison layer \(N_{\rm sc}\) is at most
\(C\varepsilon_{\rm sc}\), as proved above; hence
\(B_{\delta_{\rm sc}}^-(W_{\rm sc})\le
C(\delta_{\rm sc}+\varepsilon_{\rm sc})\).
Lemma~\ref{lem:bc-local-comparison}, with \(N_*=N_{\rm sc}\), gives
\[
(W_{{\rm sc},i}^n-u_{h,i}^n)^+
\le C(\delta_{\rm sc}+\varepsilon_{\rm sc})
 +T\mathfrak E_h(\varepsilon_{\rm sc}),\qquad i\in I_{\delta_{\rm sc}}.
\]
At these nodes, \eqref{eq:quant-scalar-close} adds at most
\(C\varepsilon_{\rm sc}\) when \(W_{\rm sc}\) is replaced by \(u\).
On the collar, \eqref{eq:bc-main-sandwich} directly gives
\((u-u_h)^+\le2K_{\rm bar}\zeta\le C\delta_{\rm sc}\).
Combining the two spatial regions for \(n\le N_{\rm sc}\) with
the estimate on the final time layers proves
\begin{equation}
\|(u-u_h)^+\|_{\ell^\infty(G_{h,\dt})}
\le C\left(\delta_{\rm sc}+\varepsilon_{\rm sc}
 +\dt\varepsilon_{\rm sc}^{-3}+\dt^2\varepsilon_{\rm sc}^{-5}
 +q_h\varepsilon_{\rm sc}^{-1}\right).
\label{eq:bc-unoptimized-upper}
\end{equation}

\smallskip
\noindent\emph{The switching estimate.}
For \(0\le n\le N_{\rm sw}\), define
\[
W_{{\rm sw},i}^n=
\begin{cases}
u_{h,i}^n,&\operatorname{dist}(x_i,\partial O)\le\varepsilon_{\rm sw},\\
\phi_{\rm sw}^{\varepsilon_{\rm sw}}(t_n+\varepsilon_{\rm sw}^2,x_i),
 &\operatorname{dist}(x_i,\partial O)>\varepsilon_{\rm sw}.
\end{cases}
\]
For \(0\le n<N_{\rm sw}\) and \(i\in I_{\delta_{\rm sw}}\), \eqref{eq:bc-lower-collar}
again makes the row unstopped and identifies all its sampled values
with \(\phi_{\rm sw}^{\varepsilon_{\rm sw}}\).
Choose \(m_*\in\operatorname*{argmin}_m
g_m^{\varepsilon_{\rm sw}}(t_n+\varepsilon_{\rm sw}^2,x_i)\).
By \eqref{eq:quant-switching-envelope}, the current value equals
\(g_{m_*}^{\varepsilon_{\rm sw}}\), while each next-layer value is
at most that of this component.  Positivity and
\eqref{eq:quant-switching-active} therefore imply
\begin{align*}
\mathcal S_{h,n,i}[W_{{\rm sw},i}^n,W_{\rm sw}^{n+1}]
&\le\mathcal R_{h,n,i}^{a_{m_*}}
 [g_{m_*}^{\varepsilon_{\rm sw}}(t_n+\varepsilon_{\rm sw}^2,x_i),
  g_{m_*}^{\varepsilon_{\rm sw}}(t_{n+1}+\varepsilon_{\rm sw}^2,\cdot)]\\
&\le F^{a_{m_*}}[g_{m_*}^{\varepsilon_{\rm sw}}(\cdot+\varepsilon_{\rm sw}^2,\cdot)](t_n,x_i)
 +\mathfrak E_h(\varepsilon_{\rm sw})
\le\mathfrak E_h(\varepsilon_{\rm sw}).
\end{align*}
Thus
\begin{equation}
\mathcal S_{h,n,i}[W_{{\rm sw},i}^n,W_{\rm sw}^{n+1}]
\le\mathfrak E_h(\varepsilon_{\rm sw}).
\label{eq:bc-lower-residual}
\end{equation}

The collar mismatch is zero where \(W_{\rm sw}=u_h\).
Elsewhere on the collar, \eqref{eq:bc-main-sandwich} and the
time-shifted version of \eqref{eq:quant-switching-close} established above give
\[
u_{h,i}^n-W_{{\rm sw},i}^n
\le G+K_{\rm bar}\zeta-(u-C\varepsilon_{\rm sw}^{1/3})
\le2K_{\rm bar}\zeta+C\varepsilon_{\rm sw}^{1/3}
\le C(\delta_{\rm sw}+\varepsilon_{\rm sw}^{1/3}).
\]
At the final comparison layer \(N_{\rm sw}\), the mismatch is
\(C\varepsilon_{\rm sw}\le C\varepsilon_{\rm sw}^{1/3}\), so
\(B_{\delta_{\rm sw}}^+(W_{\rm sw})\le
C(\delta_{\rm sw}+\varepsilon_{\rm sw}^{1/3})\).
Lemma~\ref{lem:bc-local-comparison}, with \(N_*=N_{\rm sw}\), and
\eqref{eq:quant-switching-close} yield
\[
(u_{h,i}^n-u(t_n,x_i))^+
\le C(\delta_{\rm sw}+\varepsilon_{\rm sw}^{1/3})
 +T\mathfrak E_h(\varepsilon_{\rm sw}),\qquad i\in I_{\delta_{\rm sw}}.
\]
On the collar, the barrier sandwich gives
\((u_h-u)^+\le C\delta_{\rm sw}\).  Combining this with the
\(C\varepsilon_{\rm sw}\) estimate on the final time layers gives
\begin{equation}
\|(u_h-u)^+\|_{\ell^\infty(G_{h,\dt})}
\le C\left(\delta_{\rm sw}+\varepsilon_{\rm sw}^{1/3}
 +\dt\varepsilon_{\rm sw}^{-3}+\dt^2\varepsilon_{\rm sw}^{-5}
 +q_h\varepsilon_{\rm sw}^{-1}\right).
\label{eq:bc-unoptimized-lower}
\end{equation}

\smallskip
\noindent\emph{Choice of scales.}
Using \eqref{eq:bc-scale-powers} term by term gives
\begin{align*}
\dt\varepsilon_{\rm sc}^{-3}
 +\dt^2\varepsilon_{\rm sc}^{-5}+q_h\varepsilon_{\rm sc}^{-1}
&\le2\varepsilon_{\rm sc}+\varepsilon_{\rm sc}^3
\le3\varepsilon_{\rm sc},\\
\dt\varepsilon_{\rm sw}^{-3}
 +\dt^2\varepsilon_{\rm sw}^{-5}+q_h\varepsilon_{\rm sw}^{-1}
&\le2\varepsilon_{\rm sw}^{1/3}+\varepsilon_{\rm sw}^{5/3}
\le3\varepsilon_{\rm sw}^{1/3}.
\end{align*}
Together with \(\delta_{\rm sc}=C_{\rm col}\varepsilon_{\rm sc}\)
and \(\delta_{\rm sw}=C_{\rm col}\varepsilon_{\rm sw}
\le C\varepsilon_{\rm sw}^{1/3}\), these estimates prove
\eqref{eq:bc-main-upper-rate}--\eqref{eq:bc-main-lower-rate}.
Finally, \(\dt\asymp h^{10/7}\) gives \(q_h\asymp h^{4/7}\).
Substitution yields the two one-sided orders in
\eqref{eq:bc-main-final-rate}; their maximum is the full error.
\end{proof}

	\section{Numerical experiments}
\label{sec:numerics}

We consider cold-wall heat loss and three controlled diffusion problems.
The disk problems use \eqref{eq:bounded-HJB} with its infimum convention.

For all disk computations, $O=\{x\in\mathbb R^2:|x|<1\}$ and $T=1/4$.
The mesh consists of the origin and the points
\[
x_{k,j}=\frac{k}{N_r}\left(\cos\frac{2\pi j}{N_\theta},
\sin\frac{2\pi j}{N_\theta}\right),\qquad
1\le k\le N_r,\quad 0\le j<N_\theta,\quad
N_\theta=\lceil2\pi N_r\rceil.
\]
Each annular cell is split from its inner vertex at angle $\theta_j$ to its
outer vertex at angle $\theta_{j+1}$; the first ring is joined to the origin.
We let $h$ be the maximum triangle diameter; the radial resolutions are specified below.
Interpolation is linear on each triangle. Between the boundary polygon and
the circle, the two adjacent boundary values are interpolated linearly in
the polar angle. Appendix~\ref{app:numerical-mesh} verifies the mesh hypotheses.
In Examples 2 and 4, the cubature has
$\eta_1=(1,1)^\top$, $\eta_2=(1,-1)^\top$, and
$\rho_1=\rho_2=1/2$.
For each signed direction, the branch is stopped at the first positive root of
\[
|x_i\pm s\sigma^a(x_i)\eta_j+s^2\mu^a(x_i)|^2=1,
\qquad 0<s\le\sqrt{\Delta t};
\]
its duration is $s^2$, or $\Delta t$ if there is no contact. The resulting
weights and update are those of \eqref{eq:bounded-scheme}.

We set $N_t=N_{\Delta t}$ and $\Delta t=T/N_t$. Errors include all spatial
nodes, with time levels specified below and consecutive order
$p_\infty=\log(E_\infty^{\rm coarse}/E_\infty^{\rm fine})/
\log(h_{\rm coarse}/h_{\rm fine})$.
Examples 1--3 were computed in MATLAB R2022a on 64-bit Windows;
Example 4 was computed in Python.

\subsection{Example 1: parallel heat loss to a cold wall}
\label{sec:num-disk}

We model cooling of a hot spot to a zero-temperature wall, using the
constant-field specialization of the magnetized heat model in
\cite{NarskiOttaviani2014}. Perpendicular conduction is neglected on
the faster parallel diffusion time scale. The nondimensional data are
\[
A=\{0\},\qquad
\sigma^0=\binom{\sqrt3/2}{1/2},\qquad
A^0=\begin{pmatrix}3/4&\sqrt3/4\\\sqrt3/4&1/4\end{pmatrix},
\qquad\mu^0=\binom00,\qquad r^0=f^0=0,
\]
\[
u_\partial=0,\qquad\Psi(x)=1-|x|^2.
\]
This gives the linear, rank-one equation
$u_t+\tfrac38u_{x_1x_1}+\tfrac{\sqrt3}{4}u_{x_1x_2}
+\tfrac18u_{x_2x_2}=0$.
The temperature after physical time $s$ is $u(T-s,\cdot)$.

Solving the heat equation on each field-line chord gives an independent
reference. Set $b=\sigma^0$,
$\xi=b\cdot x$, $z=(-1/2,\sqrt3/2)\cdot x$,
$\ell=\sqrt{1-z^2}$, and $\alpha_k=(k+1/2)\pi$. For $t<T$ and $x\in O$,
\begin{equation}
u(t,x)=4\ell^2\sum_{k=0}^{\infty}\frac{(-1)^k}{\alpha_k^3}
\cos\!\left(\frac{\alpha_k\xi}{\ell}\right)
\exp\!\left[-\frac{\alpha_k^2(T-t)}{2\ell^2}\right].
\label{eq:num-heat-exact}
\end{equation}
The series is truncated with a uniform tail bound below $10^{-14}$.
We report $E_\infty=\max_i|u_{h,i}^0-u(0,x_i)|$, including all grid nodes.

The two branches are $x_i\pm\sqrt{\Delta t}\,b$, with truncated lengths
$s_i^\pm$. On the same mesh, the truncated LISL generator
\cite{ReisingerRotaetxe2017} is
\[
(L_hv)_i=\frac{S_{n,i}^0[v]-v_i}{s_i^+s_i^-}.
\]
Here the zero boundary value makes $S_{n,i}^0$ linear and time independent.
Forward Euler uses a time step at
most $0.95\min_i(-1/(L_h)_{ii})$. Crank--Nicolson uses time step
$\Delta t$ and one sparse LU factorization, reused throughout the computation.
Endpoint monitoring retains the full branches with weights $1/2$,
assigning zero to endpoints outside the disk.

We use $N_r=24,48,96,192$ and
$N_t=\lceil T/(ch)\rceil$, $c\in\{1/4,1/2,1\}$.
Each $(N_r,c)$ uses common interpolation and branch scales. Timings are
medians of three alternating runs, including assembly, factorization and
marching, but excluding reference evaluation and file output.

\begin{table}[H]
\caption{Example 1. Full mesh sequence, $N_t=\lceil2T/h\rceil$ and $\Delta t=T/N_t$.
Time-update counts: $9,18,35,68$; explicit LISL: $38,108,299,834$.}
\label{tab:num-heat-comparison}
\centering\small
\setlength{\tabcolsep}{1.75pt}
\begin{tabular}{@{}rr*{4}{rrr}@{}}
\toprule
& & \multicolumn{3}{c}{Stopped scheme} & \multicolumn{3}{c}{Explicit LISL} & \multicolumn{3}{c}{CN-LISL, direct LU} & \multicolumn{3}{c}{Endpoint monitoring}\\
\cmidrule(lr){3-5}\cmidrule(lr){6-8}\cmidrule(lr){9-11}\cmidrule(l){12-14}
$N_r$ & $h$ & $E_\infty$ & $p_\infty$ & Time (s) & $E_\infty$ & $p_\infty$ & Time (s) & $E_\infty$ & $p_\infty$ & Time (s) & $E_\infty$ & $p_\infty$ & Time (s)\\
24 & $5.827\mathrm e{-2}$ & $6.388\mathrm e{-3}$ & -- & 0.00593 & $5.497\mathrm e{-3}$ & -- & 0.00718 & $6.350\mathrm e{-3}$ & -- & 0.036 & $1.227\mathrm e{-1}$ & -- & 0.00605\\
48 & $2.929\mathrm e{-2}$ & $3.183\mathrm e{-3}$ & 1.013 & 0.0256 & $3.680\mathrm e{-3}$ & 0.583 & 0.0354 & $3.903\mathrm e{-3}$ & 0.708 & 0.25 & $8.381\mathrm e{-2}$ & 0.554 & 0.0247\\
96 & $1.468\mathrm e{-2}$ & $1.642\mathrm e{-3}$ & 0.959 & 0.0894 & $1.927\mathrm e{-3}$ & 0.937 & 0.251 & $2.015\mathrm e{-3}$ & 0.957 & 2.57 & $7.564\mathrm e{-2}$ & 0.148 & 0.0943\\
192 & $7.354\mathrm e{-3}$ & $8.083\mathrm e{-4}$ & 1.025 & 0.556 & $1.008\mathrm e{-3}$ & 0.938 & 2.95 & $1.037\mathrm e{-3}$ & 0.961 & 31.4 & $5.195\mathrm e{-2}$ & 0.544 & 0.564\\
\bottomrule
\end{tabular}
\end{table}

The stopped scheme has observed orders $0.959$--$1.025$ and, from $N_r=48$
onward, smaller errors than both LISL schemes. The LISL orders approach one;
endpoint monitoring has larger errors and lower, irregular orders.
On this sequence, the explicit-LISL/stopped time ratio rises from 1.21 to 5.30.

\subsection{Example 2: control-dependent anisotropic diffusion}
\label{sec:num-finite-hjb}

We now solve \eqref{eq:bounded-HJB} with $A=\{-1,0,1\}$,
$\mu^a=0$, $r^a=0.35$, and
\[
\sigma^a=
\begin{pmatrix}
\sqrt{0.08}\cos\theta_a&-\sin\theta_a\\
\sqrt{0.08}\sin\theta_a&\cos\theta_a
\end{pmatrix},\qquad
\theta_a=\frac{(30+20a)\pi}{180},\qquad A^a=\sigma^a(\sigma^a)^\top.
\]
The control therefore selects one of three orientations of the diffusion.
Let $k=\pi/2$ and prescribe the exact solution
\begin{equation}
\begin{aligned}
u_{\rm ex}(t,x)=e^{-t}\bigl[2
&+0.30\sin(kx_1)\cos(kx_2)
+0.20\cos(2kx_1)\sin(kx_2)\\
&+0.15\sin(2kx_1)\sin(2kx_2)\bigr].
\end{aligned}
\label{eq:num-disk-exact}
\end{equation}
Set
$g(x)=0.9\sin(\pi x_1/2)\cos(\pi x_2/2)$, and prescribe
\begin{equation}
\begin{aligned}
f^a(t,x)={}&1.35u_{\rm ex}(t,x)
-\tfrac12\tr\bigl(A^aD^2u_{\rm ex}(t,x)\bigr)\\
&+20\left[(a-g(x))^2-\inf_{b\in A}(b-g(x))^2\right].
\end{aligned}
\label{eq:num-hjb-source}
\end{equation}
All derivatives here are those of the explicit function
\eqref{eq:num-disk-exact}. The terminal and lateral data are
$\Psi=u_{\rm ex}(T,\cdot)$ and $u_\partial=u_{\rm ex}|_{[0,T)\times\partial O}$.
Substitution shows that $F^a[u_{\rm ex}]$ equals the nonnegative last term
of \eqref{eq:num-hjb-source}; its infimum is zero. An optimal control for
this exact solution is the element of $A$ nearest to $g(x)$, with ties at
$g(x)=\pm1/2$. This construction makes the minimizing control vary in space.

We use $N_r=12,24,48,96,192,384$ and
$N_t=4,11,28,76,203,546$, respectively, so $\Delta t\asymp h^{10/7}$.
Here $E_\infty=\max_{n,i}|u_{h,i}^n-u_{\rm ex}(t_n,x_i)|$ includes
all time layers as well as all spatial nodes.

\begin{table}[H]
\caption{Example 2. Maximum errors for the three-control problem,
using $\Delta t=T/N_t\asymp h^{10/7}$.}
\label{tab:num-finite-hjb}
\centering\small
\begin{tabular}{@{}rrrr@{}}
\toprule
$h$ & $N_t$ & $E_\infty$ & $p_\infty$\\
\midrule
$1.149\mathrm e{-1}$ & 4 & $4.028\mathrm e{-2}$ & --\\
$5.827\mathrm e{-2}$ & 11 & $1.450\mathrm e{-2}$ & 1.505\\
$2.929\mathrm e{-2}$ & 28 & $5.878\mathrm e{-3}$ & 1.312\\
$1.468\mathrm e{-2}$ & 76 & $2.455\mathrm e{-3}$ & 1.264\\
$7.354\mathrm e{-3}$ & 203 & $1.055\mathrm e{-3}$ & 1.222\\
$3.680\mathrm e{-3}$ & 546 & $5.146\mathrm e{-4}$ & 1.036\\
\bottomrule
\end{tabular}
\end{table}

The maximum error decreases at every refinement. The last consecutive
order is $1.036$. Errors are measured on the full grid, including the nodes
whose branches meet the boundary.

\subsection{Example 3: a finite-volatility \texorpdfstring{$G$}{G}-heat equation}
\label{sec:num-peng-gheat}

On $\mathbb T^2=[0,1)^2$, take $T=1$, $A=\{1,2,3,4\}$,
$\mu^a=0$, $r^a=0$, and the one-column diffusion factors
\[
\sigma^1=\binom{0.40}{0},\quad
\sigma^2=\binom{0}{0.75},\quad
\sigma^3=\frac{0.90}{\sqrt2}\binom11,\quad
\sigma^4=\frac{0.60}{\sqrt2}\binom1{-1}.
\]
With $G(M)=\tfrac12\sup_{a\in A}\tr(A^aM)$ and
$A^a=\sigma^a(\sigma^a)^\top$, the terminal-value problem is
\[
\begin{cases}
u_t+G(D^2u)+f=0,&(t,x)\in(0,1)\times\mathbb T^2,\\
u(1,x)=\Psi(x),&x\in\mathbb T^2,
\end{cases}
\]
with periodicity in both spatial coordinates. This example uses
$\sup_{a\in A}F^a[u]=0$, so the update takes a supremum of the complete
fixed-control rows. Define
\[
\Psi(x)=2+0.20\cos(2\pi x_1)+0.15\sin(2\pi x_2)
+0.10\cos(2\pi(x_1+x_2)),
\]
\[
D^2\Psi=-4\pi^2
\begin{pmatrix}
0.20\cos(2\pi x_1)+0.10\cos(2\pi(x_1+x_2))&0.10\cos(2\pi(x_1+x_2))\\
0.10\cos(2\pi(x_1+x_2))&0.15\sin(2\pi x_2)+0.10\cos(2\pi(x_1+x_2))
\end{pmatrix}.
\]
The common source $f^a=f=e^{1-t}[\Psi-G(D^2\Psi)]$ gives
$u_{\rm ex}(t,x)=e^{1-t}\Psi(x)$.
We use a uniform periodic $N\times N$ grid with spacing $h=1/N$,
$N_t=N/4$, and positive bilinear interpolation. Each fixed control has
the two equally weighted branches $x\pm\sqrt{\Delta t}\sigma^a$.
Here $E_\infty=\max_{i,j}|u_{h,ij}^0-u_{\rm ex}(0,ih,jh)|$ is measured at $t=0$.

\begin{table}[H]
\caption{Example 3. Maximum errors at $t=0$ for the periodic problem.}
\label{tab:num-peng-gheat}
\centering\small
\begin{tabular}{@{}rrrr@{}}
\toprule
$N$ & $N_t$ & $E_\infty$ & $p_\infty$\\
\midrule
20 & 5 & $1.508\mathrm e{0}$ & --\\
40 & 10 & $4.866\mathrm e{-1}$ & 1.632\\
80 & 20 & $2.131\mathrm e{-1}$ & 1.192\\
160 & 40 & $9.728\mathrm e{-2}$ & 1.131\\
320 & 80 & $4.564\mathrm e{-2}$ & 1.092\\
640 & 160 & $2.186\mathrm e{-2}$ & 1.062\\
\bottomrule
\end{tabular}
\end{table}

The maximum error decreases on all six meshes, and the consecutive order
approaches one. This periodic computation tests the nonlinear diffusion
update without a lateral boundary.

\subsection{Example 4: controlled drift and rank-one diffusion}
\label{sec:num-drifted-boundary}

We return to the unit disk with $T=1/4$, $A=\{-1,0,1\}$, and $r^a=0.35$.
Consider \eqref{eq:bounded-HJB} with
\[
\mu^a(x)=\begin{pmatrix}0.18x_1-0.10x_2+0.08a\\0.10x_1+0.18x_2+0.04a\end{pmatrix},
\qquad
\sigma^a=\begin{pmatrix}
\nu\cos\theta_a&-\sin\theta_a\\
\nu\sin\theta_a&\cos\theta_a
\end{pmatrix},\qquad \theta_a=\frac{(30+20a)\pi}{180}.
\]
We compare $\nu=\sqrt{0.08}$ and $\nu=0$, keeping the drift, control set,
data, and mesh sequence fixed. In the second case $A^a$ has rank one.
With $u_{\rm ex}$ as in \eqref{eq:num-disk-exact} and
$g(x)=0.9\sin(\pi x_1/2)\cos(\pi x_2/2)$, set
\[
\begin{aligned}
f^a={}&1.35u_{\rm ex}-\mu^a\cdot Du_{\rm ex}
-\tfrac12\tr\bigl(\sigma^a(\sigma^a)^\top D^2u_{\rm ex}\bigr)\\
&+20\left[(a-g(x))^2-\inf_{b\in A}(b-g(x))^2\right],\\
\Psi(x)={}&u_{\rm ex}(T,x),\qquad
u_\partial(t,x)=u_{\rm ex}(t,x)\quad (x\in\partial O).
\end{aligned}
\]
Thus $u_{\rm ex}$ solves the problem for both values of $\nu$.
We use the $\Delta t\asymp h^{10/7}$ sequence from Example 2.

\begin{table}[H]
\caption{Example 4. Maximum errors with controlled drift. Both cases use
the same nodes and time steps; all nodes and time layers enter the error.}
\label{tab:num-drifted-boundary}
\centering\small
\begin{tabular}{@{}rrrrrr@{}}
\toprule
& & \multicolumn{2}{c}{$\nu=\sqrt{0.08}$} & \multicolumn{2}{c}{$\nu=0$}\\
\cmidrule(lr){3-4}\cmidrule(l){5-6}
$h$ & $N_t$ & $E_\infty$ & $p_\infty$ & $E_\infty$ & $p_\infty$\\
\midrule
$1.149\mathrm e{-1}$ & 4 & $4.008\mathrm e{-2}$ & -- & $4.129\mathrm e{-2}$ & --\\
$5.827\mathrm e{-2}$ & 11 & $1.468\mathrm e{-2}$ & 1.479 & $1.540\mathrm e{-2}$ & 1.452\\
$2.929\mathrm e{-2}$ & 28 & $6.012\mathrm e{-3}$ & 1.298 & $6.343\mathrm e{-3}$ & 1.290\\
$1.468\mathrm e{-2}$ & 76 & $2.507\mathrm e{-3}$ & 1.267 & $2.655\mathrm e{-3}$ & 1.261\\
$7.354\mathrm e{-3}$ & 203 & $1.079\mathrm e{-3}$ & 1.219 & $1.210\mathrm e{-3}$ & 1.137\\
$3.680\mathrm e{-3}$ & 546 & $5.232\mathrm e{-4}$ & 1.045 & $5.626\mathrm e{-4}$ & 1.106\\
\bottomrule
\end{tabular}
\end{table}

The two error sequences are close. Removing the smaller diffusion
eigenvalue does not prevent convergence in this example: the final errors
are $5.232\times10^{-4}$ and $5.626\times10^{-4}$, with consecutive orders
$1.045$ and $1.106$, respectively.

\section{Conclusions}
\label{sec:conclusions}

We extend first-contact stopping and pair balancing to general fixed centrally
symmetric positive degree-two cubatures, allowing unequal pair masses.
For degenerate parabolic Dirichlet HJB equations on bounded domains,
the stopped weights
preserve generator moments despite unequal branch durations.
Positive interpolation gives monotonicity, discrete comparison, and uniform
stability without diagonal dominance or uniform ellipticity.

The main technical contribution is to retain the control-dependent effective
time \(\tau\) in both the stopped Taylor remainder and the weighted
interpolation error. In particular, the bound \(2\tau/\Delta t\) on
the interpolated branch mass prevents consistency from deteriorating as
\(\tau/\Delta t\) vanishes. This gives the uniform discrete barrier
estimates needed near the Dirichlet boundary.

Combined with interior moment cancellations, a discrete terminal estimate,
and the bounded-domain shaking and switching framework, these bounds yield
the global error estimates of Theorem~\ref{thm:bc-rate}. Under the stated
coefficient, boundary, and mesh assumptions, including a common strict
boundary barrier, they apply to nonsmooth viscosity solutions and give an
\(O(h^{1/7})\) maximum-norm bound along \(\Delta t\asymp h^{10/7}\).

The cold-wall refinement study, evaluated against an independent series
solution, shows near-first-order errors, smaller maximum errors than
endpoint monitoring, and a growing time advantage over explicit truncated
LISL. The controlled anisotropic and Peng G-heat examples show error decay
for nonlinear diffusion; the drifted example retains comparable accuracy
after one diffusion eigenvalue is set to zero.

\appendix
	
	\section{Paired-branch Taylor estimate in the contracted interior}
	\label{app:high-order-interior}
	
	At the interior nodes considered in
	Theorem~\ref{thm:main-high-order-consistency}, every branch has duration
	\(\dt\).  Hence the effective time and the branch weights reduce to
	\begin{equation}
		\tau_{n,i}(a)=\dt,
		\qquad
		\omega_{n,i}^{a,j,\pm}=\frac{\rho_j}{2}.
		\label{eq:app-no-stop-weights}
	\end{equation}
	This appendix proves the Taylor estimate used for such complete branches.
	The stopped-row calculation, where the two branches in a pair may have
	different durations, is treated separately in
	Proposition~\ref{prop:stopped-consistency}.
	
	Fix \((t,x,a)\), freeze the coefficients at this point, and let
	\(0<d\le1\).  For the positive and negative branches in pair \(j\), write
	\begin{equation}
		y_j^\pm
		:=d\mu^a\pm\sqrt d\,\sigma^a\eta_j,
		\qquad 1\le j\le M.
		\label{eq:app-paired-increments}
	\end{equation}
	Recall that
	\[
	\sum_{j=1}^M\rho_j=1,
	\qquad
	\sum_{j=1}^M\rho_j\eta_j\eta_j^\top=I,
	\qquad
	A^a=\sigma^a(\sigma^a)^\top.
	\]
	
All derivatives in Lemma~\ref{lem:app-cubature-moments} are evaluated at
\((t,x)\).  To write the higher Taylor terms, use the directional derivative
\begin{equation}
(v\cdot D_x)^q\phi(t,x)
:=\left.\frac{d^q}{d\lambda^q}\phi(t,x+\lambda v)\right|_{\lambda=0}.
\label{eq:app-directional-derivative-notation}
\end{equation}
Here \(v\) is held fixed when differentiating.  In particular,
\((v\cdot D_x)\phi=D\phi\cdot v\) and
\((v\cdot D_x)^2\phi=v^\top D^2\phi\,v\).
We use the derivative norm
\begin{equation}
\|D_x^q\phi(t,x)\|
:=\sup_{|v_1|,\ldots,|v_q|\le1}
\big|(v_1\cdot D_x)\cdots(v_q\cdot D_x)\phi(t,x)\big|.
\label{eq:app-derivative-operator-norm}
\end{equation}
Thus \(|(v\cdot D_x)^q\phi|\le\|D_x^q\phi\||v|^q\).
A subscript \(\infty\) also takes the supremum over the region in question.

\begin{lemma}[Cancellation between paired branches]
		\label{lem:app-cubature-moments}
		For every smooth \(\phi\),
		\begin{align}
			\sum_{j=1}^M\frac{\rho_j}{2}
			\left(
			D\phi\cdot y_j^+
			+D\phi\cdot y_j^-
			\right)
			&=d\mu^a\cdot D\phi,
			\label{eq:app-paired-first-moment}\\
			\sum_{j=1}^M\frac{\rho_j}{2}
			\left(
			(y_j^+)^\top D_x^2\phi\,y_j^+
			+(y_j^-)^\top D_x^2\phi\,y_j^-
			\right)
			&=d\tr(A^aD_x^2\phi)
			+d^2(\mu^a)^\top D_x^2\phi\,\mu^a.
			\label{eq:app-paired-second-moment}
		\end{align}
		For \(q=3,4,5,6\),
		\begin{equation}
\left|\sum_{j=1}^M\frac{\rho_j}{2}
 \left((y_j^+\cdot D_x)^q\phi+(y_j^-\cdot D_x)^q\phi\right)\right|
\le
\begin{cases}
Cd^2\|D_x^q\phi\|, & q=3,4,\\
Cd^3\|D_x^q\phi\|, & q=5,6.
\end{cases}
\label{eq:app-paired-higher-moments}
\end{equation}
		and
		\begin{equation}
			\sum_{j=1}^M\frac{\rho_j}{2}
			\left(|y_j^+|^q+|y_j^-|^q\right)
			\le Cd^{q/2},
			\qquad q=2,4,6.
			\label{eq:app-paired-absolute-moments}
		\end{equation}
		The constants are uniform in \(a,t,x,d\).
	\end{lemma}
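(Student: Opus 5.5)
The plan is to expand each paired increment \(y_j^\pm=d\mu^a\pm\sqrt d\,z_j\), with \(z_j:=\sigma^a\eta_j\), and to use the antipodal symmetry together with the moment identities \eqref{eq:cubature-moments}. I will write
\[
\sum_j\frac{\rho_j}{2}(\cdot)^{+}+(\cdot)^{-}
=\sum_j\rho_j\,\mathrm{Even}_j(\cdot),
\]
where \(\mathrm{Even}_j\) keeps only the terms containing an even number of factors \(\sqrt d\,z_j\). The odd powers of \(z_j\) cancel within each pair, because the two signs carry equal mass \(\rho_j/2\).

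\emph{First two moments.} For \eqref{eq:app-paired-first-moment}, the identity \(y_j^++y_j^-=2d\mu^a\) and \(\sum_j\rho_j=1\) give the claim directly. For \eqref{eq:app-paired-second-moment}, I expand the quadratic form. The cross terms \(\pm2d^{3/2}(\mu^a)^\top D_x^2\phi\,z_j\) cancel within each pair. What remains is \(d^2(\mu^a)^\top D_x^2\phi\,\mu^a\) plus
\[
d\sum_j\rho_j\, z_j^\top D_x^2\phi\, z_j
=d\tr\Bigl(\sigma^a\Bigl(\sum_j\rho_j\eta_j\eta_j^\top\Bigr)(\sigma^a)^\top D_x^2\phi\Bigr)
=d\tr(A^aD_x^2\phi).
\]

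\emph{Higher moments.} Write \((y\cdot D_x)^q\phi\) as the symmetric \(q\)-linear form \(D_x^q\phi[y,\dots,y]\), and expand it multilinearly in \(y=d\mu^a+\sqrt d\,z_j\). A term with \(m\) factors \(\sqrt d\,z_j\) and \(q-m\) factors \(d\mu^a\) has size \(d^{m/2+(q-m)}\) times bounded coefficients. Here \(|\mu^a|\) and \(|\sigma^a\eta_j|\) are bounded uniformly by Assumption~\ref{ass:bounded} and the fixed cubature, and the factor \(\|D_x^q\phi\|\) comes from \eqref{eq:app-derivative-operator-norm}. After pairing, only even \(m\) survive, and the size \(d^{q-m/2}\) is largest when \(m\) is the largest even integer \(\le q\).
\begin{itemize}
\item \(q=3\): \(m=2\) gives \(d^2\).
\item \(q=4\): \(m=4\) gives \(d^2\).
\item \(q=5\): \(m=4\) gives \(d^3\).
\item \(q=6\): \(m=6\) gives \(d^3\).
\end{itemize}
Since \(d\le1\), every other surviving term is of smaller or equal order. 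This proves \eqref{eq:app-paired-higher-moments}.

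\emph{Absolute moments.} For \eqref{eq:app-paired-absolute-moments}, I use \(|y_j^\pm|\le d|\mu^a|+\sqrt d|z_j|\le C\sqrt d\) for \(d\le1\). Raising to the \(q\)-th power and using unit total mass gives \(Cd^{q/2}\).

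All constants depend only on \(K\), \(M\), \(\max_j|\eta_j|\) and \(q\), so they are uniform in \(a,t,x,d\). I expect no real obstacle here. The only point requiring care is the bookkeeping of the powers of \(d\) for odd \(q\): the surviving odd-\(q\) terms must carry a drift factor, and that factor supplies the extra power of \(d\).
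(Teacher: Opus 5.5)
Your proposal is correct and follows the paper's proof essentially step for step. You use \(y_j^++y_j^-=2d\mu^a\) for the first moment, cancellation of the mixed terms plus \(\sum_j\rho_j\eta_j\eta_j^\top=I_p\) for the second, the even-order (binomial or multilinear) expansion with the same largest-even-power bookkeeping for \(q=3,\dots,6\), and \(|y_j^\pm|\le C\sqrt d\) for the absolute moments.
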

	
\begin{proof}
The two increments in \eqref{eq:app-paired-increments} satisfy
\(y_j^++y_j^-=2d\mu^a\).  Multiplying by \(D\phi\), averaging with
weights \(\rho_j/2\), and using \(\sum_j\rho_j=1\) proves
\eqref{eq:app-paired-first-moment}.
For the quadratic term, direct expansion gives
\[
\frac12\left((y_j^+)^\top D^2\phi\,y_j^+
             +(y_j^-)^\top D^2\phi\,y_j^-\right)
=d^2(\mu^a)^\top D^2\phi\,\mu^a
 +d(\sigma^a\eta_j)^\top D^2\phi\,\sigma^a\eta_j.
\]
The two mixed terms have opposite signs and cancel.  Multiplication by
\(\rho_j\) and summation give \eqref{eq:app-paired-second-moment}, since
\(\sum_j\rho_j(\sigma^a\eta_j)(\sigma^a\eta_j)^\top=A^a\).

For the higher terms, the frozen directional derivatives commute, so
\[
\begin{aligned}
&\frac12\left((y_j^+\cdot D_x)^q\phi
                 +(y_j^-\cdot D_x)^q\phi\right)\\
&\qquad=\sum_{\substack{0\le k\le q\\k\ {\rm even}}}
\binom qk d^{q-k/2}
 (\mu^a\cdot D_x)^{q-k}(\sigma^a\eta_j\cdot D_x)^k\phi.
\end{aligned}
\]
Indeed, a term with \(k\) diffusion factors changes by \((-1)^k\)
between the two signs; only even \(k\) remain.  The smallest power of
\(d\) is obtained at the largest even \(k\):
\[
\begin{array}{c|rrrr}
q&3&4&5&6\\ \hline
\text{largest even }k&2&4&4&6\\
\text{smallest power}&d^2&d^2&d^3&d^3
\end{array}
\]
The coefficient bounds, \eqref{eq:app-derivative-operator-norm}, and
\(d\le1\) therefore give \eqref{eq:app-paired-higher-moments}
after averaging in \(j\).
Finally,
\[
|y_j^\pm|\le d|\mu^a|+\sqrt d\,|\sigma^a\eta_j|\le C\sqrt d.
\]
Taking the \(q\)-th power and using \(\sum_j\rho_j=1\) proves
\eqref{eq:app-paired-absolute-moments}.
\end{proof}
	
	\begin{lemma}[Sixth-order Taylor estimate for complete branches]
		\label{lem:app-sixth-order-taylor}
		Suppose that all derivatives
		\(D_x^\alpha\partial_t^q\phi\) with
		\(|\alpha|+2q\le6\) are continuous and bounded on the space--time region
		containing \([t,t+d]\times\bigcup_{j,\pm}[x,x+y_j^\pm]\).  Then
		\begin{align}
			&\sum_{j=1}^M\frac{\rho_j}{2}
			\left[
			\phi(t+d,x+y_j^+)
			+\phi(t+d,x+y_j^-)
			\right]
			\notag\\
			&\quad=
			\phi(t,x)
			+d\left(
			\partial_t\phi
			+\mu^a\cdot D\phi
			+\frac12\tr(A^aD^2\phi)
			\right)(t,x)
			+R_d^a[\phi].
			\label{eq:app-paired-taylor-expansion}
		\end{align}
		Moreover,
		\begin{align}
			|R_d^a[\phi]|
			\le{}&Cd^2\Bigl(
			\|D_x^2\phi\|_\infty
			+\|D_x^3\phi\|_\infty
			+\|D_x^4\phi\|_\infty
			\Bigr)
			\notag\\
			&+Cd^2\Bigl(
			\|D_x\partial_t\phi\|_\infty
			+\|D_x^2\partial_t\phi\|_\infty
			+\|\partial_{tt}\phi\|_\infty
			\Bigr)
			\notag\\
			&+Cd^3\Bigl(
			\|D_x^5\phi\|_\infty
			+\|D_x^3\partial_t\phi\|_\infty
			+\|D_x\partial_{tt}\phi\|_\infty
			\Bigr)
			\notag\\
			&+Cd^3\Bigl(
			\|D_x^6\phi\|_\infty
			+\|D_x^4\partial_t\phi\|_\infty
			+\|D_x^2\partial_{tt}\phi\|_\infty
			+\|\partial_{ttt}\phi\|_\infty
			\Bigr).
			\label{eq:app-sixth-order-remainder}
		\end{align}
	\end{lemma}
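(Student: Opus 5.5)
We expand each branch value by a Taylor formula in time and space around \((t,x)\), average the two signs of each pair, and apply Lemma~\ref{lem:app-cubature-moments} to the spatial moments.

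\textbf{Time and space split.} For fixed \((j,\pm)\) write
\[
\phi(t+d,x+y)-\phi(t,x)=\bigl[\phi(t+d,x+y)-\phi(t,x+y)\bigr]+\bigl[\phi(t,x+y)-\phi(t,x)\bigr],
\qquad y=y_j^\pm .
\]

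\textbf{Spatial increment.} Expand the second bracket to sixth order along the segment \([x,x+y]\):
\[
\sum_{q=1}^5\frac1{q!}(y\cdot D_x)^q\phi(t,x)+O\bigl(\|D_x^6\phi\|_\infty|y|^6\bigr).
\]
After averaging with \(\rho_j/2\):
\begin{itemize}
\item[(a)] \eqref{eq:app-paired-first-moment} gives the drift term \(d\mu^a\cdot D\phi\).
\item[(b)] \eqref{eq:app-paired-second-moment} gives \(\tfrac d2\tr(A^aD^2\phi)\) plus \(\tfrac{d^2}{2}\mu^\top D^2\phi\,\mu\), which is bounded by \(Cd^2\|D_x^2\phi\|\).
\item[(c)] \eqref{eq:app-paired-higher-moments} bounds the \(q=3,4\) terms by \(d^2\) and the \(q=5\) term by \(d^3\).
\item[(d)] \eqref{eq:app-paired-absolute-moments} bounds the sixth-order remainder by \(Cd^3\|D_x^6\phi\|_\infty\).
\end{itemize}

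\textbf{Temporal increment.} Expand the first bracket in time:
\[
d\,\partial_t\phi(t,x+y)+\tfrac{d^2}{2}\partial_{tt}\phi(t,x+y)+O\bigl(d^3\|\partial_{ttt}\phi\|_\infty\bigr).
\]
Then Taylor expand \(\partial_t\phi(t,x+y)\) in space to fourth order with remainder \(\|D_x^4\partial_t\phi\|_\infty|y|^4\). The linear term averages to \(d^2\mu\cdot D\partial_t\phi\), which is \(O(d^2\|D_x\partial_t\phi\|)\) after multiplying by \(d\). The quadratic term averages to \(O(d\|D^2_x\partial_t\phi\|)\), hence \(O(d^2)\) after the factor \(d\). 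The cubic term averages to \(O(d^2\|D^3_x\partial_t\phi\|)\) by the \(q=3\) cancellation, hence \(O(d^3)\). The remainder is \(d\cdot d^2\|D_x^4\partial_t\phi\|\).

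Similarly, expand \(\partial_{tt}\phi(t,x+y)\) to second order. The constant term is \(\tfrac{d^2}{2}\partial_{tt}\phi\), contributing \(O(d^2\|\partial_{tt}\phi\|)\). The linear term averages to \(d\mu\cdot D\partial_{tt}\phi\), giving \(O(d^3)\). The remainder is \(d^2\cdot d\|D_x^2\partial_{tt}\phi\|\).

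\textbf{Collecting terms.} The principal part \(\phi+d(\partial_t\phi+\mu\cdot D\phi+\tfrac12\tr(A^aD^2\phi))\) appears exactly. Every other term matches one entry of \eqref{eq:app-sixth-order-remainder}; \(d\le1\) lets higher powers be absorbed.

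\textbf{Main obstacle.} The delicate point is making sure odd-order mixed terms do not produce half-integer powers such as \(d^{3/2}\) or \(d^{5/2}\). The fix is to always average the antipodal pair before bounding. For the spatial terms this is exactly what \eqref{eq:app-paired-higher-moments} does. For the time-derivative terms, pair the linear and cubic spatial terms explicitly: \(y_j^++y_j^-=2d\mu\), and the cubic sum keeps only even powers of \(\sigma\eta_j\). Absolute-moment bounds are used only where a full \(d^2\) or \(d^3\) factor is already available.

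Continuity and boundedness of the derivatives on the region containing \([t,t+d]\times\bigcup_{j,\pm}[x,x+y_j^\pm]\) justify every integral-form remainder, since all segments lie there. All constants depend only on the uniform bounds of \(\mu,\sigma\) and the cubature, so they are uniform in \(a,t,x,d\).
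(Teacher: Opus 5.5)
Your proposal is correct and follows the paper's proof essentially step for step. Like the paper, you expand in time at the endpoint \(x+y\) to second order with an \(O(d^3\|\partial_{ttt}\phi\|_\infty)\) remainder. You then expand \(\phi(t,x+y)\), \(\partial_t\phi(t,x+y)\) and \(\partial_{tt}\phi(t,x+y)\) in space to orders five, three and one, and bound the averaged terms and remainders via the paired-moment identities and absolute-moment bounds of Lemma~\ref{lem:app-cubature-moments}.
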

	
\begin{proof}
We first expand one branch, then average the two signs to identify the
terms of order \(d\).  Fix \(y=y_j^\pm\).  Taylor expansion in time gives
\[
\phi(t+d,x+y)=\phi(t,x+y)+d\partial_t\phi(t,x+y)
 +\frac{d^2}{2}\partial_{tt}\phi(t,x+y)+r_t,
\qquad |r_t|\le Cd^3\|\partial_{ttt}\phi\|_\infty.
\]
Expand the three displayed functions in space to orders five, three,
and one, respectively.  Since \(|y|=O(\sqrt d)\), each omitted term
then has size \(O(d^3)\) times its derivative norm.  This gives
\begin{equation}
\begin{aligned}
\phi(t+d,x+y_j^\pm)
={}&\sum_{m=0}^{5}\frac{(y_j^\pm\cdot D_x)^m\phi}{m!}\\
&+d\sum_{m=0}^{3}\frac{(y_j^\pm\cdot D_x)^m\partial_t\phi}{m!}\\
&+\frac{d^2}{2}\left(\partial_{tt}\phi
                  +D\partial_{tt}\phi\cdot y_j^\pm\right)
+r_{j,\pm},
\end{aligned}
\label{eq:app-sequential-taylor}
\end{equation}
where all displayed derivatives are at \((t,x)\), and
\begin{align}
|r_{j,\pm}|\le C\Bigl(&
 \|D_x^6\phi\|_\infty|y_j^\pm|^6
 +d\|D_x^4\partial_t\phi\|_\infty|y_j^\pm|^4\notag\\
&+d^2\|D_x^2\partial_{tt}\phi\|_\infty|y_j^\pm|^2
 +d^3\|\partial_{ttt}\phi\|_\infty\Bigr).
\label{eq:app-integral-remainder-bound}
\end{align}
The four terms come, in order, from the three spatial remainders and
the time remainder.

Multiply \eqref{eq:app-sequential-taylor} by \(\rho_j/2\) and sum
over \(j\) and both signs.  We treat its three lines in turn.

The first line has constant, linear, and quadratic contributions
\[
\phi+d\mu^a\cdot D\phi
 +\frac d2\tr(A^aD^2\phi)
 +\frac{d^2}{2}(\mu^a)^\top D^2\phi\,\mu^a,
\]
by \eqref{eq:app-paired-first-moment}--\eqref{eq:app-paired-second-moment}.
Its cubic and quartic terms are bounded by
\(Cd^2(\|D_x^3\phi\|_\infty+\|D_x^4\phi\|_\infty)\),
and its fifth-order term by \(Cd^3\|D_x^5\phi\|_\infty\),
by \eqref{eq:app-paired-higher-moments}.

For the second line, apply the same identities to \(\partial_t\phi\)
and multiply by \(d\).  The constant term is \(d\partial_t\phi\),
while the linear and quadratic terms are
\[
d^2\mu^a\cdot D\partial_t\phi
 +\frac{d^2}{2}\tr(A^aD^2\partial_t\phi)
 +\frac{d^3}{2}(\mu^a)^\top D^2\partial_t\phi\,\mu^a.
\]
These are bounded by
\(Cd^2(\|D_x\partial_t\phi\|_\infty+
\|D_x^2\partial_t\phi\|_\infty)\), using \(d^3\le d^2\).
The cubic term is bounded by \(Cd^3\|D_x^3\partial_t\phi\|_\infty\).
The third line averages exactly to
\[
\frac{d^2}{2}\partial_{tt}\phi
 +\frac{d^3}{2}\mu^a\cdot D\partial_{tt}\phi.
\]

Thus the constant and order-\(d\) terms give
\eqref{eq:app-paired-taylor-expansion}; all other displayed terms are
bounded by the first three lines of \eqref{eq:app-sixth-order-remainder}.
Finally, \eqref{eq:app-paired-absolute-moments} gives
\[
\sum_{j,\pm}\frac{\rho_j}{2}|r_{j,\pm}|
\le Cd^3\left(
\|D_x^6\phi\|_\infty+\|D_x^4\partial_t\phi\|_\infty
+\|D_x^2\partial_{tt}\phi\|_\infty+\|\partial_{ttt}\phi\|_\infty
\right).
\]
This is the last line of \eqref{eq:app-sixth-order-remainder} and
completes the proof.
\end{proof}
	
	\begin{lemma}[Truncation bound at the smoothing scale]
		\label{lem:app-mollified-truncation}
		Let \(0<\varepsilon\le1\), and suppose
		\begin{equation}
			\|\phi\|_\infty\le C_{\rm reg},
			\qquad
			\|D_x^\alpha\partial_t^q\phi\|_\infty
			\le C_{\rm reg}\varepsilon^{1-|\alpha|-2q},
			\qquad
			1\le|\alpha|+2q\le6.
			\label{eq:app-mollified-derivatives}
		\end{equation}
		Then the remainder in \eqref{eq:app-paired-taylor-expansion} satisfies
		\begin{equation}
			|R_d^a[\phi]|
			\le
			C\left(
			d^2\varepsilon^{-3}
			+d^3\varepsilon^{-5}
			\right),
			\label{eq:app-average-truncation}
		\end{equation}
		uniformly in the control.
	\end{lemma}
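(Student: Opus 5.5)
The plan is to substitute the scaled derivative bounds \eqref{eq:app-mollified-derivatives} term by term into the remainder estimate \eqref{eq:app-sixth-order-remainder} of Lemma~\ref{lem:app-sixth-order-taylor}. Then, using \(0<\varepsilon\le1\) (and \(d\le1\) where needed), I will show that every term is dominated by either \(d^2\varepsilon^{-3}\) or \(d^3\varepsilon^{-5}\). Uniformity in the control follows because the constant in Lemma~\ref{lem:app-sixth-order-taylor} depends only on the uniform coefficient bounds and the fixed cubature. The constant \(C_{\rm reg}\) is also independent of \(a\).

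For the \(d^2\) lines, each derivative norm there has parabolic weight \(|\alpha|+2q\) between \(2\) and \(4\). These are \(D_x^2,D_x^3,D_x^4\) (weights \(2,3,4\)), \(D_x\partial_t\) (weight \(3\)), \(D_x^2\partial_t\) (weight \(4\)), and \(\partial_{tt}\) (weight \(4\)). The assumed bound \(C_{\rm reg}\varepsilon^{1-w}\) with \(w\le4\) gives at most \(C_{\rm reg}\varepsilon^{-3}\), since \(\varepsilon^{1-w}\le\varepsilon^{-3}\) for \(\varepsilon\le1\). So the first two lines are bounded by \(Cd^2\varepsilon^{-3}\).

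For the \(d^3\) lines, the weights are \(5\) for \(D_x^5\), \(D_x^3\partial_t\), \(D_x\partial_{tt}\), and \(6\) for \(D_x^6\), \(D_x^4\partial_t\), \(D_x^2\partial_{tt}\), \(\partial_{ttt}\). Each of these derivatives is covered by the hypothesis, since \(|\alpha|+2q\le6\). Their norms are therefore at most \(C_{\rm reg}\varepsilon^{-4}\le C_{\rm reg}\varepsilon^{-5}\) and \(C_{\rm reg}\varepsilon^{-5}\), respectively. This yields \(Cd^3\varepsilon^{-5}\).

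The only point requiring care is bookkeeping. I need to confirm that Lemma~\ref{lem:app-sixth-order-taylor} is applicable, i.e. that the Taylor segments \([t,t+d]\times[x,x+y_j^\pm]\) lie in the region where \eqref{eq:app-mollified-derivatives} holds. The lemma is stated for whatever region carries these norms, and in the application Theorem~\ref{thm:main-high-order-consistency} guarantees this by its clearance condition. I also need to check that no weight-\(1\) term survives at order \(d^2\). Here \(D_x\phi\) does not appear in \eqref{eq:app-sixth-order-remainder}, so there is no stray \(d^2\varepsilon^0\) term. Even if it did, it would be absorbed because \(\varepsilon\le1\). Summing the two groups gives \eqref{eq:app-average-truncation}.
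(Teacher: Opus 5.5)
Your proposal is correct and matches the paper's proof. Both arguments substitute the scaled bounds into \eqref{eq:app-sixth-order-remainder}: the derivatives multiplying \(d^2\) have parabolic order between two and four, hence are \(O(\varepsilon^{-3})\) because \(\varepsilon\le1\), and those multiplying \(d^3\) have order five or six, hence are \(O(\varepsilon^{-5})\).
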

	
	\begin{proof}
		Every derivative multiplied by \(d^2\) in
		\eqref{eq:app-sixth-order-remainder} has parabolic order between two and
		four.  Since \(0<\varepsilon\le1\), all of these derivatives are bounded by
		\(C\varepsilon^{-3}\).  Every derivative multiplied by \(d^3\) has
		parabolic order five or six and is therefore bounded by
		\(C\varepsilon^{-5}\).  Substitution into
		\eqref{eq:app-sixth-order-remainder} proves
		\eqref{eq:app-average-truncation}.
	\end{proof}
	
	Together, Lemmas~\ref{lem:app-sixth-order-taylor} and
	\ref{lem:app-mollified-truncation} give exactly the complete-branch expansion
	used in Theorem~\ref{thm:main-high-order-consistency}.

	\section{Auxiliary barriers and estimates near the terminal time}
	\label{app:auxiliary-barrier}
	
	Lemma~\ref{lem:extended-cylinder-barrier} supplies the common strict
	barrier required for auxiliary regularity in
	Lemma~\ref{lem:quantitative-regularity}.  The scalar and switching
	cylinders extend to \(-\varepsilon^2\) and \(-2\varepsilon^2\);
	we construct the barrier on the larger cylinder, with coefficients
	evaluated at \((t-\theta,x+e)\).
	Lemma~\ref{lem:discrete-terminal-modulus} separately controls the
	numerical solution near \(T\), as needed in Theorem~\ref{thm:bc-rate}.
	
	\begin{lemma}[Barrier on the extended auxiliary cylinders]
		\label{lem:extended-cylinder-barrier}
		Let
		\begin{equation}
			H_\varepsilon
			:=
			T+2\varepsilon^2,
			\qquad
			\lambda_\varepsilon
			:=
			\frac{T}{H_\varepsilon},
			\qquad
			\chi_\varepsilon(t)
			:=
			\lambda_\varepsilon
			\bigl(t+2\varepsilon^2\bigr),
			\label{eq:extended-time-compression}
		\end{equation}
		and define
		\begin{equation}
			\zeta_{\rm ext}^{\varepsilon}(t,x)
			:=
			\frac{2}{\beta}\,
			\zeta\bigl(\chi_\varepsilon(t),x\bigr),
			\qquad
			(t,x)\in
			[-2\varepsilon^2,T]\times\overline O.
			\label{eq:extended-cylinder-barrier}
		\end{equation}
		For all sufficiently small \(\varepsilon>0\),
		\(\zeta_{\rm ext}^{\varepsilon}\) is positive on
		\([ -2\varepsilon^2,T)\times O\), nonnegative on
		\(\{T\}\times\overline O\), and vanishes on
		\([ -2\varepsilon^2,T]\times\partial O\).  Moreover, there is a
		constant \(C\), independent of \(\varepsilon\), such that
		\begin{equation}
			\left\|
			\zeta_{\rm ext}^{\varepsilon}
			\right\|_{
				2,\gamma;
				[-2\varepsilon^2,T]\times\overline O
			}
			\leq C.
			\label{eq:extended-cylinder-barrier-norm}
		\end{equation}
		
		For every \(a\in A\),
		\((\theta,e)\in\Theta_\varepsilon\), and
		\((t,x)\in[-2\varepsilon^2,T]\times O\),
		\begin{align}
			&\partial_t
			\zeta_{\rm ext}^{\varepsilon}(t,x)
			+\frac12\tr\!\left(
			A^a(t-\theta,x+e)
			D^2\zeta_{\rm ext}^{\varepsilon}(t,x)
			\right)
			\notag\\
			&\qquad
			+\mu^a(t-\theta,x+e)
			\cdot D\zeta_{\rm ext}^{\varepsilon}(t,x)
			\notag\\
			&\qquad
			-r^a(t-\theta,x+e)
			\zeta_{\rm ext}^{\varepsilon}(t,x)
			\leq-1.
			\label{eq:extended-cylinder-barrier-residual}
		\end{align}
		
		The lateral lifting \(\widehat G\) satisfies
		\begin{equation}
			\left\|
			\widehat G
			\right\|_{
				2,\gamma;
				[-2\varepsilon^2,T]\times\overline O
			}
			\leq C,
			\label{eq:extended-lifting-norm}
		\end{equation}
		and the terminal--lateral compatibility estimate
		\begin{equation}
			\left|
			\Psi(x)-\widehat G(T,x)
			\right|
			\leq
			\frac{\beta C_{\rm comp}}{2}\,
			\zeta_{\rm ext}^{\varepsilon}(T,x),
			\qquad
			x\in\overline O,
			\label{eq:extended-cylinder-compatibility}
		\end{equation}
		holds with a constant independent of \(\varepsilon\).
		
		The same conclusions hold on the scalar cylinder
		\([-\varepsilon^2,T]\times\overline O\) by restriction.
	\end{lemma}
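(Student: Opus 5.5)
The plan is to verify each assertion directly from the definition $\zeta_{\rm ext}^\varepsilon(t,x)=\frac{2}{\beta}\zeta(\chi_\varepsilon(t),x)$. The map $\chi_\varepsilon$ is affine and increasing, with $\chi_\varepsilon(-2\varepsilon^2)=0$ and $\chi_\varepsilon(T)=\lambda_\varepsilon H_\varepsilon=T$. It therefore maps $[-2\varepsilon^2,T]$ onto $[0,T]$ and $[-2\varepsilon^2,T)$ onto $[0,T)$.

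\emph{Sign and boundary properties.} Positivity on $[-2\varepsilon^2,T)\times O$, nonnegativity at $T$, and vanishing on the lateral boundary follow immediately from \textup{(Q1)}.

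\emph{Norm bound \eqref{eq:extended-cylinder-barrier-norm}.} Because $\lambda_\varepsilon\in[T/(T+2),1]$, the chain rule gives $\partial_t^q D_x^\alpha\zeta_{\rm ext}^\varepsilon=\frac2\beta\lambda_\varepsilon^q(\partial_t^qD_x^\alpha\zeta)\circ\chi_\varepsilon$. For H\"older quotients, $\varrho((\chi_\varepsilon(t),x),(\chi_\varepsilon(s),y))\le\varrho((t,x),(s,y))$ since $\lambda_\varepsilon\le1$, so every seminorm is bounded by the corresponding one for $\zeta$.

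\emph{Lifting bound \eqref{eq:extended-lifting-norm}.} This is \eqref{eq:quantitative-G-extension-bound} restricted to $[-2\varepsilon^2,T]\subset[-T/2,T]$ once $\varepsilon\le\varepsilon_0$.

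\emph{Compatibility \eqref{eq:extended-cylinder-compatibility}.} Since $\widehat G(T)=G(T)$ and $\zeta_{\rm ext}^\varepsilon(T,x)=\frac2\beta\zeta(T,x)$, we have $\frac{\beta C_{\rm comp}}{2}\zeta_{\rm ext}^\varepsilon(T,x)=C_{\rm comp}\zeta(T,x)$, and \textup{(Q3)} gives the inequality.

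\emph{Residual inequality \eqref{eq:extended-cylinder-barrier-residual}.} This is the main step. Fix $(t,x)$ and write $\bar t=\chi_\varepsilon(t)\in[0,T]$. Apply \textup{(Q2)} at $(\bar t,x)$ with the original coefficients evaluated at $(\bar t,x)$:
\[
\partial_t\zeta+\tfrac12\tr(A^a D^2\zeta)+\mu^a\cdot D\zeta-r^a\zeta\le-\beta .
\]
Multiplying by $2/\beta$ turns the right-hand side into $-2$. There are two discrepancies to control, and it suffices to show each is $O(\varepsilon)$, hence at most $1$ for small $\varepsilon$.

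\begin{enumerate}
\item \emph{Time derivative.} The chain rule produces $\frac2\beta\lambda_\varepsilon\partial_t\zeta$ instead of $\frac2\beta\partial_t\zeta$. The difference is bounded by $\frac2\beta(1-\lambda_\varepsilon)\|\partial_t\zeta\|_\infty\le C\varepsilon^2$, because $1-\lambda_\varepsilon=2\varepsilon^2/H_\varepsilon$.
\item \emph{Coefficient arguments.} The coefficients are evaluated at $(t-\theta,x+e)$ rather than $(\bar t,x)$. The extended coefficients are Lipschitz in $\varrho$ with constant $K$ by \eqref{eq:McShane-basic-properties}, and $\varrho((t-\theta,x+e),(\bar t,x))\le|e|+|t-\theta-\bar t|^{1/2}$. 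Here $|t-\bar t|=(1-\lambda_\varepsilon)|t|+2\lambda_\varepsilon\varepsilon^2\le C\varepsilon^2$ and $\theta\le\varepsilon^2$, so this distance is $\le C\varepsilon$. Each coefficient difference is therefore $\le C\varepsilon$. For the diffusion term, use $|A^a(z)-A^a(w)|\le 2\|\sigma\|_\infty|\sigma(z)-\sigma(w)|$. These differences multiply $D\zeta$, $D^2\zeta$, $\zeta$, which are bounded by the $C^{2,\gamma}$ norm.
\end{enumerate}

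Hence the left side of \eqref{eq:extended-cylinder-barrier-residual} is at most $-2+C\varepsilon$, and choosing $\varepsilon$ small gives $\le-1$. The only care needed is that the extended $r$ is the positive-part extension, which keeps the same Lipschitz bound and agrees with $r$ on the physical cylinder, so the estimate applies to it as well.

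Restriction to $[-\varepsilon^2,T]$ gives the scalar-cylinder statements, since all estimates hold pointwise on the larger set.
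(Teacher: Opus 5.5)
Your proposal is correct and follows essentially the same route as the paper: the affine compression $\chi_\varepsilon$ transfers \textup{(Q1)}, the $C^{2,\gamma}$ bound and \textup{(Q3)} directly, and the residual is obtained by applying \textup{(Q2)} at $(\chi_\varepsilon(t),x)$, then absorbing the $O(\varepsilon^2)$ time-derivative correction and the $O(\varepsilon)$ coefficient shifts (through the McShane extensions) into the margin between $-2$ and $-1$. The only blemish is that $|t-\chi_\varepsilon(t)|=(1-\lambda_\varepsilon)|t|+2\lambda_\varepsilon\varepsilon^2$ should be an inequality; in fact $t-\chi_\varepsilon(t)=2\varepsilon^2(t-T)/H_\varepsilon\in[-2\varepsilon^2,0]$, and this does not affect the argument.
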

	
	\begin{proof}
The increasing affine map \(\chi_\varepsilon\) sends
\([-2\varepsilon^2,T]\) onto \([0,T]\), with
\(\chi_\varepsilon(T)=T\) and \(0<\lambda_\varepsilon\le1\).
Only the barrier uses this map; the auxiliary terminal remains \(T\).
Thus positivity, the zero lateral trace, and the uniform norm bound
\eqref{eq:extended-cylinder-barrier-norm} follow from \textup{(Q1)}.
For \((\theta,e)\in\Theta_\varepsilon\),
\[
|t-\theta-\chi_\varepsilon(t)|
\le\varepsilon^2+(1-\lambda_\varepsilon)(T+2\varepsilon^2)
=3\varepsilon^2,\qquad |e|\le\varepsilon.
\]
Consequently the coefficient extensions satisfy, for
\(b\in\{A,\mu,r\}\),
\begin{equation}
|b^a(t-\theta,x+e)-b^a(\chi_\varepsilon(t),x)|\le C\varepsilon.
\label{eq:extended-coefficient-distance}
\end{equation}
For \(\chi=\chi_\varepsilon(t)\), \eqref{eq:extended-coefficient-distance} gives
\begin{align*}
&\tfrac12\left|\tr\!\left(
 (A^a(t-\theta,x+e)-A^a(\chi,x))D^2\zeta(\chi,x)\right)\right|\\
&\quad+\left|(\mu^a(t-\theta,x+e)-\mu^a(\chi,x))\cdot D\zeta(\chi,x)\right|
 +\left|(r^a(t-\theta,x+e)-r^a(\chi,x))\zeta(\chi,x)\right|\\
&\qquad\le C\varepsilon
 (\|D^2\zeta\|_\infty+\|D\zeta\|_\infty+\|\zeta\|_\infty),
\end{align*}
and the time correction satisfies
\[
|(\lambda_\varepsilon-1)\zeta_t(\chi,x)|
\le \frac{2\varepsilon^2}{T}\|\zeta_t\|_\infty.
\]
Together with \textup{(Q2)}, these bounds place the left-hand side of
\eqref{eq:extended-cylinder-barrier-residual} below
\((2/\beta)(-\beta+C_{\rm sh}\varepsilon)\), with fixed \(C_{\rm sh}\).
For \(C_{\rm sh}\varepsilon_0\le\beta/2\), this is at most \(-1\).

The lifting bound follows from
\eqref{eq:quantitative-G-extension-bound} and
\([-2\varepsilon^2,T]\subset[-T/2,T]\).
Finally, \(\zeta_{\rm ext}^\varepsilon(T)=2\zeta(T)/\beta\) and
\(\widehat G(T)=G(T)\), so \textup{(Q3)} gives
\eqref{eq:extended-cylinder-compatibility}.  Restriction to the smaller
scalar cylinder preserves all these properties.
\end{proof}
	
\begin{lemma}[Estimate near the terminal time]
\label{lem:discrete-terminal-modulus}
Assume Assumption~\ref{ass:bounded}, conditions \textup{(Q1)} and
\textup{(Q3)} of Assumption~\ref{ass:quantitative-boundary}, and the
mesh and interpolation hypotheses of Theorem~\ref{thm:bc-rate}.
For \(0<\dt\le1\) and \(q_h=h^2/\dt\le1\), the solution of
\eqref{eq:bounded-scheme} satisfies
\begin{equation}
\|u_h^n-\Psi|_{G_h}\|_{\ell^\infty(G_h)}
\le C\sqrt{T-t_n},\qquad 0\le n\le N_\dt,
\label{eq:discrete-terminal-modulus}
\end{equation}
where \(C\) is independent of the mesh and of \(n\).
\end{lemma}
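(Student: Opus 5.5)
We prove \eqref{eq:discrete-terminal-modulus} by discrete comparison (Proposition~\ref{prop:bounded-monotone}) with explicit barriers built around each terminal point. The natural idea is to fix a node \(x_k\) and compare \(u_h\) with
\[
W^\pm(t,x)=\Psi(x_k)\pm\Bigl(L|x-x_k|_\delta+\frac{M}{\delta}\,(T-t)+\cdots\Bigr),
\]
where \(|\cdot|_\delta\) is a smoothed distance, for instance \(\sqrt{|x-x_k|^2+\delta^2}\). We then choose \(\delta=\sqrt{T-t_n}\).

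A local-in-\(x_k\) barrier, however, must also dominate the lateral data, and near \(\partial O\) it would pay a curvature cost. It is simpler to combine the global stopped barrier with a terminal smoothing argument. We work with
\[
\Phi^\pm_\delta:=G^\delta\pm K\zeta\pm\bigl(\text{smoothing error}\bigr).
\]
Equivalently, we split \(u_h-\Psi\) into two parts, \(u_h-G(t,\cdot)\) and \(G(t,\cdot)-\Psi\). Since \(G\) is Lipschitz in \(t^{1/2}\), we have \(|G(t_n)-G(T)|\le C\sqrt{T-t_n}\).

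The plan proceeds in three steps.

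\textbf{Step 1.} By (Q3) and Proposition~\ref{prop:bc-main-barrier}, more precisely its stopped-row expansion \eqref{eq:bc-main-bar-master}, which holds for any \(C^{2,\gamma}\) function whose stopped values match the data, \(|\Psi-G(T)|\le C\zeta(T)\). This reduces the terminal datum to \(G(T)\) up to a term absorbed by \(K\zeta\). It is not yet a modulus, because \(\zeta(T)\) need not be small in the interior.

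\textbf{Step 2.} We mollify \(\Psi\) in space at scale \(\delta\) to obtain \(\Psi_\delta\). It satisfies \(|\Psi_\delta-\Psi|\le C\delta\), \(|D\Psi_\delta|\le C\), and \(|D^2\Psi_\delta|\le C/\delta\). Near the boundary we patch it with \(G(T)\) using the factor \(\zeta\), so that the lateral values stay correct. We then define the supersolution candidate
\[
V^+(t,x)=\Psi_\delta(x)+C_1\delta+\frac{C_2}{\delta}(T-t)+K\zeta(t,x)+\bigl(G(t,x)-G(T,x)\bigr)^+\text{-type correction}.
\]
Applying the stopped consistency of Proposition~\ref{prop:stopped-consistency}, with the remainder retaining the factor \(\tau\), we get
\[
\mathcal R^a_{h,n,i}[V^+]\le -\frac{C_2}{\delta}+\frac{C}{\delta}+C\Bigl(\sqrt{\dt}\,\delta^{-2}+\frac{q_h}{\delta}\Bigr)-\beta K.
\]
The \(D^3\) terms cost \(\delta^{-2}\) and must be controlled. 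We therefore restrict to \(\delta\ge\sqrt{\dt}\), which is legitimate because for \(T-t_n<\dt\) the claim is trivial from one step of stability, and we choose \(C_2\) large. We take \(\delta=\sqrt{T-t_n}\) for the target layer and apply discrete comparison on layers \(n,\dots,N_\dt\). This gives
\[
u_h^n\le\Psi+C\delta+\frac{C_2}{\delta}(T-t_n)+\cdots\le\Psi+C\sqrt{T-t_n}.
\]
The lower bound follows symmetrically.

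\textbf{Main obstacle.} The hardest part is the boundary patch. The barrier must satisfy \(V^+\ge u_\partial\) at stopped contacts and \(V^+(T)\ge\Psi\), while \(\zeta(T)\) may vanish or degenerate in the interior. Meanwhile, \(K\zeta\) contributes only a uniform strict residual \(-\beta K\) but no pointwise smallness. The first thing to try is to take
\[
V^+=\max\text{-free sum }\;G+K\zeta+\bigl(\Psi_\delta-G(T)\bigr)\eta,
\]
where \(\eta\) is a cutoff, and to handle the mismatch \(|\Psi-G(T)|\le C_{\rm comp}\zeta(T)\) by this patch. Failing that, we would use \(\min\{G+K\zeta,\ \text{local barrier}\}\), which is a discrete supersolution because a minimum of supersolutions is one by monotonicity. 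In that case only the local barrier needs to be consistent away from its inactive region.
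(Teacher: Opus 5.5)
There is a genuine gap, and it sits at the step you call the main obstacle. The proposal never establishes that its barrier dominates the lateral data at stopped contacts, and the route it does describe leaves the lemma's hypotheses.

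\begin{itemize}
\item The residual $-\beta K$ you use for $K\zeta$ comes from \textup{(Q2)}, which this lemma deliberately does not assume. The expansion \eqref{eq:bc-main-bar-master} itself is fine, but without \textup{(Q2)} it gives no sign for $\zeta$.
\item The ``$(G(t,x)-G(T,x))^+$-type correction'' is not $C^2$, so neither consistency expansion applies to it.
\item The $\min\{G+K\zeta,\cdot\}$ fallback does not remove the difficulty. At nodes where the local barrier is active, its stopped branches still read $u_\partial$, so it must dominate the lateral data there anyway.
\item The splitting $u_h-\Psi=(u_h-G)+(G-\Psi)$ is a dead end, because $G(T)-\Psi$ is only $O(\zeta(T))=O(1)$ in the interior, as you note yourself.
\end{itemize}

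The missing observation is elementary. \textup{(Q1)} and \textup{(Q3)} force $u_\partial(T,\cdot)=G(T,\cdot)=\Psi$ on $\partial O$. Hence at any contact point
$|u_\partial(t,x)-\Psi(x_i)|\le\|\partial_tG\|_\infty(T-t)+L_\Psi|x-x_i|$.

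So the barrier you wrote down first and then abandoned already works. Take $\Phi_\pm=\Psi(x_i)\pm L_\Psi P(x)\pm K(T-t)/\eta$, with $P(x)=\sqrt{\eta^2+|x-x_i|^2}$ and $\eta=\sqrt{T-t_n}\le\sqrt T$. It orders the lateral data once $K\ge\sqrt T\|\partial_tG\|_\infty$, and it orders the terminal data by the Lipschitz bound on $\Psi$.

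There is no curvature cost near $\partial O$. $P$ is smooth on all of $\mathbb R^N$ with $\|D^2P\|\le\eta^{-1}$, so $P(x_\ell+y)\le P(x_\ell)+DP(x_\ell)\cdot y+|y|^2/(2\eta)$ holds on every branch, stopped or not. The row residual then follows from three facts:
\begin{itemize}
\item the stopped moments \eqref{eq:weight-moments} give $\sum\omega_j^\pm y_j^\pm=\tau\mu$ and $\sum\omega_j^\pm|y_j^\pm|^2\le C\tau$;
\item the interpolated-mass bound \eqref{eq:interpolated-branch-mass}, with $q_h\le1$, keeps the interpolation error at $C\tau(1+\eta^{-1})$;
\item the time term contributes at most $-K\tau/((1+r_{\max}\dt)\eta)$.
\end{itemize}
Hence each row residual is at most $\tau\bigl(C+CL_\Psi/\eta-K/((1+r_{\max}\dt)\eta)\bigr)\le0$ for $K$ large.

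Comparison on layers $n,\dots,N_\dt$, evaluated at $(t_n,x_i)$ where $P(x_i)=\eta=(T-t_n)/\eta$, gives $|u_{h,i}^n-\Psi(x_i)|\le(L_\Psi+K)\sqrt{T-t_n}$. This is the paper's argument. It uses no third derivatives, no $\zeta$, no \textup{(Q2)}, and no mesh smallness beyond $\dt\le1$ and $q_h\le1$.

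Your global variant $\Psi_\delta\pm C_1\delta\pm C_2(T-t)/\delta$, with $\Psi_\delta$ a mollified Lipschitz extension of $\Psi$, would also work once this boundary observation is used. The cost is that you must control the $\sqrt\dt\,\delta^{-2}$ third-derivative term.
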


\begin{proof}
The case \(n=N_\dt\) is the terminal condition.  Fix an arbitrary
\(n<N_\dt\) and \(x_i\in G_h\), and set
\[
\eta=\sqrt{T-t_n},\qquad P(x)=\sqrt{\eta^2+|x-x_i|^2},\qquad
\Phi_\pm(t,x)=\Psi(x_i)\pm L_\Psi P(x)\pm K\frac{T-t}{\eta}.
\]
Here \(L_\Psi\) is a Lipschitz constant of \(\Psi\),
\(\sqrt\dt\le\eta\le\sqrt T\), \(\|DP\|_\infty\le1\), and
\(\|D^2P\|_\infty\le\eta^{-1}\).
We keep \(n,x_i,\eta\) fixed, verify the discrete upper and lower
inequalities on all layers \(t_n\le t_m\le T\), and finally evaluate
the resulting bounds at \((t_n,x_i)\).  The time term will dominate
the increase from spatial averaging and interpolation.

Since \(P(x)\ge|x-x_i|\), Lipschitz continuity gives
\(\Phi_-(T,x)\le\Psi(x)\le\Phi_+(T,x)\).
On \(\partial O\), conditions \textup{(Q1)} and \textup{(Q3)} imply
\(u_\partial=G\) and \(G(T,x)=\Psi(x)\); hence
\[
|u_\partial(t,x)-\Psi(x_i)|
\le |G(t,x)-G(T,x)|+|\Psi(x)-\Psi(x_i)|
\le \|\partial_tG\|_\infty(T-t)+L_\Psi|x-x_i|.
\]
Thus \(K\ge\sqrt T\,\|\partial_tG\|_\infty\) ensures the boundary
ordering at every contact time, including those between grid layers.

Fix any interior row \((m,\ell,a)\), \(n\le m<N_\dt\), and suppress
its indices.  With coefficients frozen there, put
\(s_j^\pm=\sqrt{d_j^\pm}\) and
\(y_j^\pm=d_j^\pm\mu\pm s_j^\pm\sigma\eta_j\).
Taylor's inequality gives
\[
P(x_\ell+y_j^\pm)
\le P(x_\ell)+DP(x_\ell)\cdot y_j^\pm+\frac{|y_j^\pm|^2}{2\eta}.
\]
By \eqref{eq:weight-moments},
\(\sum\omega_j^\pm y_j^\pm=\tau\mu\) and
\(\sum\omega_j^\pm|y_j^\pm|^2\le C\tau\).
Using \(\eta\le\sqrt T\), we obtain
\[
\sum_{j,\pm}\omega_j^\pm P(x_\ell+y_j^\pm)
\le P(x_\ell)+C\tau/\eta.
\]
Let \(\mathcal P_j^\pm\) be \(P\) at a stopped contact, or
\(\Isp[P|_{G_h}]\) at an unstopped full-step endpoint.
These values are nonnegative.  Equations~\eqref{eq:Isp-error} and
\eqref{eq:interpolated-branch-mass} give
\[
\sum_{j,\pm}\omega_j^\pm\mathcal P_j^\pm
\le P(x_\ell)+\frac{C\tau}{\eta}
   +C\frac{\tau h^2}{\dt}(1+\eta^{-1})
\le P(x_\ell)+\frac{C\tau}{\eta},
\]
where the last bound uses \(q_h\le1\) and \(\eta\le\sqrt T\).

Set \(r_{\max}=\|r\|_\infty\) and \(q_j^\pm=(1+rd_j^\pm)^{-1}\).
Since \(r\ge0\) and \(d_j^\pm\le\dt\),
\[
\sum_{j,\pm}\omega_j^\pm(1-q_j^\pm)\le r_{\max}\tau,\qquad
\sum_{j,\pm}\omega_j^\pm q_j^\pm d_j^\pm
\ge\frac{\tau}{1+r_{\max}\dt}.
\]
Also \(\sum\omega_j^\pm q_j^\pm\mathcal P_j^\pm
\le P(x_\ell)+C\tau/\eta\).
To offset this spatial increase, write \(H_m=T-t_m\) and compute the
change in the remaining time:
\[
\begin{aligned}
\sum_{j,\pm}\omega_j^\pm q_j^\pm(H_m-d_j^\pm)-H_m
&=-H_m\sum_{j,\pm}\omega_j^\pm(1-q_j^\pm)
  -\sum_{j,\pm}\omega_j^\pm q_j^\pm d_j^\pm\\
&\le-\frac{\tau}{1+r_{\max}\dt}.
\end{aligned}
\]
The interpolant preserves constants.  At stopped contacts, the prescribed
value lies between \(\Phi_-\) and \(\Phi_+\), as proved above.
Expanding the three terms of \(\Phi_\pm\) in the row therefore yields
\[
\begin{aligned}
S_{m,\ell}^a[\Phi_+(t_{m+1},\cdot)|_{G_h}]-\Phi_+(t_m,x_\ell)
&\le\tau\left(C+\frac{CL_\Psi}{\eta}
             -\frac{K}{(1+r_{\max}\dt)\eta}\right),\\
S_{m,\ell}^a[\Phi_-(t_{m+1},\cdot)|_{G_h}]-\Phi_-(t_m,x_\ell)
&\ge-\tau\left(C+\frac{CL_\Psi}{\eta}
             -\frac{K}{(1+r_{\max}\dt)\eta}\right).
\end{aligned}
\]
The constant term is bounded by
\((r_{\max}\|\Psi\|_\infty+\|f\|_\infty)\tau\);
the source contributes exactly \(f\tau\).
In addition to the boundary requirement, choose
\(K\ge(1+r_{\max})(C\sqrt T+CL_\Psi)\).
Since \(\dt\le1\) and \(\eta\le\sqrt T\), the parentheses are
nonpositive in every row, uniformly in \(n,i,h,\dt\).
Taking the infimum over \(a\) gives the discrete upper and lower
inequalities.  Proposition~\ref{prop:bounded-monotone}, applied
backwards from the terminal layer, now gives
\[
\Phi_-(t_m,x_\ell)\le u_{h,\ell}^m\le\Phi_+(t_m,x_\ell),
\qquad n\le m\le N_\dt.
\]
At the fixed target point, \(P(x_i)=\eta=(T-t_n)/\eta\), so
\[
|u_{h,i}^n-\Psi(x_i)|\le(L_\Psi+K)\sqrt{T-t_n}.
\]
The same \(K\) works for every choice of \(n,i\).
Taking the maximum over \(i\) proves \eqref{eq:discrete-terminal-modulus}.
\end{proof}

	\section{Taylor remainder along stopped branches}
\label{app:bc-barrier-comparison}

The stopped-barrier proof uses a Taylor estimate along the actual
branch curve, so all sampled points remain in \(\overline O\).
The following lemma supplies its \(s^{2+\gamma}\) remainder.

\begin{lemma}[Branchwise parabolic Taylor remainder]
		\label{lem:bc-app-branch-remainder}
		Let \(0<\gamma\le1\) and \(\phi\in C^{2,\gamma}(Q_T^{\rm cl})\), let
		\(0<s\le\sqrt\dt\le1\), and put
		\[
		z=\pm\sigma_i^n(a)\eta_j,\qquad
		\mu=\mu_i^n(a),\qquad
		p(r)=rz+r^2\mu,\qquad y=p(s).
		\]
		Assume
		\[
		x_i+p(r)\in\overline O,\qquad0\le r\le s,
		\qquad
		x_i+p(r)\in O,\qquad0\le r<s,
		\qquad t_n+s^2\le T.
		\]
		Then, with all derivatives on the right evaluated at \((t_n,x_i)\),
		\begin{align}
			\phi(t_n+s^2,x_i+y)
			={}&\phi(t_n,x_i)+s^2\partial_t\phi
			+s^2D\phi\cdot\mu+sD\phi\cdot z\notag\\
			&+\frac12s^2z^\top D^2\phi\,z
			+s^3\mu^\top D^2\phi\,z
			+\frac12s^4\mu^\top D^2\phi\,\mu
			+\mathfrak r_j^\pm,
			\label{eq:bc-app-branch-taylor}
		\end{align}
		where
		\begin{equation}
			\abs{\mathfrak r_j^\pm}
			\le C\norm{\phi}_{2,\gamma;Q_T^{\rm cl}}s^{2+\gamma}.
			\label{eq:bc-app-one-branch-rem}
		\end{equation}
		The constant depends only on the common coefficient bounds and the fixed
		cubature.
	\end{lemma}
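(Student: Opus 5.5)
The plan is to reduce to a one-variable Taylor expansion along the branch curve. Define \(g(r):=\phi(t_n+r^2,x_i+p(r))\) for \(0\le r\le s\). By hypothesis every argument lies in \(Q_T^{\rm cl}\). Since \(\phi\in C^{2,\gamma}\) in the parabolic sense, it has continuous \(\partial_t\phi\), \(D\phi\) and \(D^2\phi\), so \(g\) is \(C^1\) in \(r\). The time variable enters only through \(r^2\), which is the source of the parabolic weighting. Rather than differentiating \(g\) twice, which would require \(D\partial_t\phi\) and is not available, I would write the increment \(\phi(t_n+s^2,x_i+y)-\phi(t_n,x_i)\) as the sum of a time increment and a space increment. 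The time increment is \(\phi(t_n+s^2,x_i+y)-\phi(t_n,x_i+y)\) and the space increment is \(\phi(t_n,x_i+y)-\phi(t_n,x_i)\).

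The space increment at fixed time \(t_n\) is a second-order Taylor expansion with Hölder remainder. I would write
\[
\phi(t_n,x_i+y)-\phi(t_n,x_i)=D\phi\cdot y+\int_0^1(1-\lambda)\,y^\top D^2\phi(t_n,x_i+\lambda y)\,y\,d\lambda .
\]
Replacing \(D^2\phi(t_n,x_i+\lambda y)\) by \(D^2\phi(t_n,x_i)\) costs \([D^2\phi]_\gamma|y|^{2+\gamma}\le Cs^{2+\gamma}\), because \(|y|\le Cs\). Inserting \(y=sz+s^2\mu\) then produces exactly the terms \(sD\phi\cdot z+s^2D\phi\cdot\mu+\tfrac12s^2z^\top D^2\phi z+s^3\mu^\top D^2\phi z+\tfrac12s^4\mu^\top D^2\phi\mu\). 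The main obstacle is that the segment \([x_i,x_i+y]\) need not lie in \(\overline O\), since \(O\) is not convex. To avoid this I would not integrate along the straight segment. Instead I would integrate along the curve \(r\mapsto x_i+p(r)\), which lies in \(\overline O\) by hypothesis. Set \(h(r)=\phi(t_n,x_i+p(r))\), so \(h'(r)=D\phi(t_n,x_i+p(r))\cdot p'(r)\). Then write
\[
D\phi(t_n,x_i+p(r))=D\phi(t_n,x_i)+\int_0^1 D^2\phi(t_n,x_i+\lambda p(r))\,p(r)\,d\lambda ,
\]
noting that this inner segment can also leave \(O\). The cleanest remedy is therefore to use a \(C^{2,\gamma}\) extension of \(\phi\) to a neighbourhood with comparable norm, for instance a Whitney/Stein-type extension in the parabolic scale, assuming \(\partial O\) is regular enough for such an extension. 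The constant then depends on \(\|\phi\|_{2,\gamma}\), and the straight-segment argument goes through. If no extension is wanted, one can instead integrate twice along the curve, using \(D^2\phi\) only at curve points. This gives \(h(s)-h(0)=\int_0^s D\phi(t_n,x_i+p(r))\cdot p'(r)\,dr\). One then expands \(D\phi(t_n,x_i+p(r))-D\phi(t_n,x_i)\) along the same curve, \(\int_0^r D^2\phi(t_n,x_i+p(\rho))p'(\rho)\,d\rho\), and freezes \(D^2\phi\) at \(x_i\) with error \(C\rho^\gamma\). Integrating the polynomial terms reproduces the same coefficients, with remainder \(\int_0^s\!\int_0^r C\rho^\gamma\,d\rho\,dr\le Cs^{2+\gamma}\). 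I would use this second route, since it needs only the curve hypothesis stated in the lemma.

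For the time increment, \(\phi(t_n+s^2,x_i+y)-\phi(t_n,x_i+y)=\int_0^{s^2}\partial_t\phi(t_n+\sigma,x_i+y)\,d\sigma\). Replacing \(\partial_t\phi(t_n+\sigma,x_i+y)\) by \(\partial_t\phi(t_n,x_i)\) costs \([\partial_t\phi]_\gamma\,\varrho^\gamma\le C(|y|+s)^\gamma\le Cs^\gamma\). The increment therefore equals \(s^2\partial_t\phi+O(s^{2+\gamma})\). The point \((t_n,x_i+y)\) and the points \((t_n+\sigma,x_i+y)\) lie in \(Q_T^{\rm cl}\), because \(x_i+y\in\overline O\) and \(t_n+s^2\le T\). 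Adding the two increments gives \eqref{eq:bc-app-branch-taylor}. The constant depends only on \(\|\phi\|_{2,\gamma}\) and on the bounds for \(|z|\) and \(|\mu|\), which come from the coefficient bounds and the fixed cubature.
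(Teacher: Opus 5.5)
Your proposal is correct and, once you commit to the second route, it is essentially the paper's proof. The paper also separates the time increment $\int_0^{s^2}\partial_t\phi(t_n+q,x_i+y)\,dq$ at the endpoint $x_i+y$, bounded through the parabolic H\"older seminorm of $\partial_t\phi$, from the spatial increment, which it obtains by integrating $D\phi$ twice along the branch curve $r\mapsto x_i+p(r)$ with $D^2\phi$ frozen at $x_i$, so the extension detour is unnecessary and the remainder is $C\int_0^s\int_0^r q^\gamma\,dq\,dr\le Cs^{2+\gamma}$.
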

	
\begin{proof}
Write \(t=t_n\), \(x=x_i\), and \(y=p(s)\).  We prove the expansion
by separating its time and space remainders.  The coefficient bounds give
\[
|p(r)|\le Cr,\qquad |p'(r)|=|z+2r\mu|\le C,
\qquad 0\le r\le s,
\]
so in particular \(|y|\le Cs\).

First, integrate in time at the endpoint \(x+y\), adding and subtracting
\(\partial_t\phi(t,x)\):
\begin{align}
\phi(t+s^2,x+y)
&=\phi(t,x+y)+s^2\partial_t\phi(t,x)+R_t,\notag\\
R_t&=\int_0^{s^2}
 \bigl(\partial_t\phi(t+q,x+y)-\partial_t\phi(t,x)\bigr)\,dq.
\label{eq:bc-app-time-remainder}
\end{align}
The parabolic \(\gamma\)-H\"older continuity of \(\partial_t\phi\) gives
\begin{equation}
|R_t|\le[\partial_t\phi]_{\gamma;Q_T^{\rm cl}}
 \int_0^{s^2}(\sqrt q+|y|)^\gamma\,dq
 \le C[\partial_t\phi]_{\gamma;Q_T^{\rm cl}}s^{2+\gamma}.
\label{eq:bc-app-time-remainder-bound}
\end{equation}
Here the integration length is \(s^2\), and the variation of
\(\partial_t\phi\) is bounded by \(Cs^\gamma\).

For the spatial expansion, integrate along \(x+p(r)\), which stays in
\(\overline O\).  Subtracting the linear approximation of the gradient
gives the exact identity
\begin{equation}
\begin{aligned}
&D\phi(t,x+p(r))-D\phi(t,x)-D^2\phi(t,x)p(r)\\
&\qquad=\int_0^r
 \bigl(D^2\phi(t,x+p(q))-D^2\phi(t,x)\bigr)p'(q)\,dq.
\end{aligned}
\label{eq:bc-app-space-remainder}
\end{equation}
Multiply this identity by \(p'(r)\) using the inner product and integrate
from \(0\) to \(s\).  The three terms on the left integrate to
\(\phi(t,x+y)-\phi(t,x)\), \(-D\phi(t,x)\cdot y\), and
\(-\tfrac12 y^\top D^2\phi(t,x)y\), respectively.  Consequently,
\begin{align}
\phi(t,x+y)
={}&\phi(t,x)+D\phi(t,x)\cdot y
 +\tfrac12 y^\top D^2\phi(t,x)y+R_x,\notag\\
|R_x|
\le{}&C[D^2\phi]_{\gamma;Q_T^{\rm cl}}
       \int_0^s\int_0^r q^\gamma\,dq\,dr
 \le C[D^2\phi]_{\gamma;Q_T^{\rm cl}}s^{2+\gamma}.
\label{eq:bc-app-space-remainder-bound}
\end{align}
Indeed, the Hessian difference in \eqref{eq:bc-app-space-remainder}
is bounded by \([D^2\phi]_{\gamma;Q_T^{\rm cl}}|p(q)|^\gamma\le Cq^\gamma\),
and both factors \(p'\) are bounded.

Combining the two expansions leaves the remainder
\(\mathfrak r_j^\pm=R_t+R_x\), which satisfies
\eqref{eq:bc-app-one-branch-rem}.  Finally, substitute \(y=sz+s^2\mu\):
\[
D\phi\cdot y=sD\phi\cdot z+s^2D\phi\cdot\mu,
\]
\[
\tfrac12 y^\top D^2\phi\,y
=\tfrac12s^2z^\top D^2\phi\,z
 +s^3\mu^\top D^2\phi\,z
 +\tfrac12s^4\mu^\top D^2\phi\,\mu.
\]
These are exact identities, giving all the terms in
\eqref{eq:bc-app-branch-taylor}.
\end{proof}

	\section{Mesh and boundary hypotheses for the disk examples}
\label{app:numerical-mesh}

This appendix checks that the disk discretization used in
Section~\ref{sec:numerics} falls within the assumptions of the error analysis.
These checks do not enter the numerical update.

\begin{lemma}[Polar disk mesh]
\label{lem:num-polar-mesh}
The polar triangulations in Section~\ref{sec:numerics}, with positive
angular interpolation between the boundary polygon and the circle,
satisfy \textup{(G1)}--\textup{(G4)} and
\eqref{eq:Isp}--\eqref{eq:Isp-error}, uniformly in $N_r$.
\end{lemma}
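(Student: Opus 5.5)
We need to verify (G1)–(G4) for the polar disk mesh and the interpolation bound. Throughout, the constants must be independent of \(N_r\).

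\textbf{Mesh geometry.} The nodes are the origin and the ring points \(x_{k,j}\). The ring spacing is \(1/N_r\) radially, and the arc spacing is at most \(2\pi k/(N_rN_\theta)\le 1/N_r\) because \(N_\theta=\lceil 2\pi N_r\rceil\). Hence every triangle has diameter at most \(C/N_r\). Conversely, \(h\ge 1/N_r\), since each triangle has a radial edge of length at least \(1/N_r\). So \(h\asymp N_r^{-1}\).

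\textbf{Hypotheses (G1)–(G3).} For (G1), the boundary polygon vertices are exactly the \(k=N_r\) ring points. These lie on \(\partial O\), and all other vertices have radius at most \(1-1/N_r\), so they lie in \(O\). For (G2), each outer-annulus cell is split along a diagonal into two triangles. Each of these has at most one edge on the outer polygon: one triangle has the outer arc chord as an edge, and the other touches the outer ring only at a vertex. For (G3), conformity holds because consecutive rings share the vertices \(x_{k,j}\) and the splitting diagonals match across cells. Nondegeneracy holds because the three vertices of each triangle are never collinear when \(N_\theta\ge7\).

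\textbf{Hypothesis (G4), Hausdorff distance.} The chord sagitta is \(1-\cos(\pi/N_\theta)\le C N_r^{-2}\le Ch^2\), which gives the Hausdorff bound.

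\textbf{Hypothesis (G4), the map \(p_h\).} Inside the polygon, take \(p_h=\mathrm{id}\). In a circular segment between the chord \([x_{N_r,j},x_{N_r,j+1}]\) and the arc, map the point at polar angle \(\phi\) and radius \(\rho\) to the chord point with the same angular parameter. The resulting \(P_1\) weights are then the linear-in-angle weights used in the numerics, so the scheme's interpolation matches \(\Isp\). Then \(p_h\) fixes the vertices, and \(|p_h(x)-x|\le\) sagitta plus an \(O(N_\theta^{-2})\) discrepancy between angle and chord parameter, which is \(\le C_ph^2\). The weights are nonnegative and sum to one.

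\textbf{Hypothesis (G4), the patch condition.} The active vertices of \(p_h(x)\) are vertices of the triangle containing it, so \(|x_\ell-x|\le h+C_ph^2\le C_{\rm patch}h\). For the segment condition \([x,x_\ell]\subset\overline O\) and \([x,x_\ell)\subset O\), use convexity of the disk: \(x\in O\) and \(x_\ell\in\overline O\) give \([x,x_\ell)\subset O\), by the standard open-segment property of convex sets. This holds for every \(x\in\mathcal X_h\subset O\).

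\textbf{Interpolation bound.} The estimate \eqref{eq:Isp-error} then follows from the general Taylor argument given after \eqref{eq:Isp}. That argument needs only (G4) and the barycentric identity \(\sum\lambda_\ell x_\ell=p_h(x)\), which holds by construction of the angular weights as chord barycentric coordinates.

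\textbf{Main obstacle.} The delicate step is the \(O(h^2)\) bound for \(p_h\) in the boundary segments. It requires checking that linear interpolation in polar angle corresponds to a chord point within \(O(N_\theta^{-2})\) of \(x\). I would parametrize by \(\phi\in[\theta_j,\theta_{j+1}]\) and \(\rho\in[\rho_{\rm chord}(\phi),1]\), and bound the distance to \(x_{N_r,j}+\frac{\phi-\theta_j}{\theta_{j+1}-\theta_j}(x_{N_r,j+1}-x_{N_r,j})\) by a second-order Taylor expansion of \(e^{i\phi}\) in \(\phi\).
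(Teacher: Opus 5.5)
Your proposal is correct and follows essentially the same route as the paper: you use the same map $p_h$, sending a crescent point to the chord point with linear parameter $\alpha=(\phi-\theta_j)/\Delta\theta$. You also use the same splitting of $|p_h(x)-x|$ into the sagitta plus the $(\Delta\theta)^2/8$ linear-interpolation error of $z(\theta)$, and convexity of the disk for the segment condition. The only difference is cosmetic: the paper redoes the Taylor bound about $y=p_h(x)$, whereas you invoke the general expansion about $x$ following \eqref{eq:Isp}, which is equally valid once (G4) has been verified.
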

\begin{proof}
The triangulation is conforming, every triangle is nondegenerate, and
each boundary triangle has at most one boundary edge. The small angles
at the origin are permitted by \textup{(G3)}.
Put $z(\theta)=(\cos\theta,\sin\theta)$ and
$\Delta\theta=2\pi/N_\theta$. Define $p_h(x)=x$ in the polygon and,
for $x=rz(\theta)$ in a boundary crescent, set
\[
p_h(x)=(1-\alpha)z(\theta_j)+\alpha z(\theta_{j+1}),\qquad
\alpha=(\theta-\theta_j)/\Delta\theta.
\]
Since $|z''|=1$, linear interpolation gives
$|p_h(z(\theta))-z(\theta)|\le(\Delta\theta)^2/8$.
The chord radius is
$\cos(\Delta\theta/2)/\cos(\theta-\theta_j-\Delta\theta/2)$,
so a crescent point also satisfies
$0\le1-r\le(\Delta\theta)^2/8$. Hence
$|p_h(x)-x|\le(\Delta\theta)^2/4\le Ch^2$, which gives the
Hausdorff and evaluation-map estimates.

The active vertices lie within $Ch$ of $x$, and their weights are
nonnegative and sum to one. Strict convexity of the disk gives the
segment condition in \textup{(G4)}. At $y=p_h(x)$ the active barycentric
weights, including the two chord weights, satisfy
$\sum_i\lambda_i x_i=y$. Taylor expansion about $y$ therefore yields
\[
\left|\varphi(x)-\sum_i\lambda_i\varphi(x_i)\right|
\le |x-y|\|D\varphi\|_\infty
+\tfrac12h^2\|D^2\varphi\|_\infty
\le Ch^2\|\varphi\|_{C^2}.
\]
This proves the interpolation estimate without a minimum-angle bound.
\end{proof}

For the disk examples, take $\zeta=1-|x|^2$.
In Example 1, $G=0$ gives
$\tfrac12\tr(A^0D^2\zeta)=-1$ and $|\Psi-G(T)|=\zeta$,
so \textup{(Q1)}--\textup{(Q3)} hold. In Examples 2 and 4,
take $G=u_{\rm ex}$; \textup{(Q1)} and \textup{(Q3)} then hold
with zero terminal discrepancy. In Example 2, the left side of
\textup{(Q2)} is at most $-1.08$.
In Example 4, let
$M_\mu=\sqrt{0.18^2+0.10^2}+\sqrt{0.08^2+0.04^2}$. Since
$|\mu^a|\le M_\mu$, both diffusion ranks satisfy
\begin{equation}
\tfrac12\tr(A^aD^2\zeta)+\mu^a\cdot D\zeta-r^a\zeta
\le-(1+\nu^2)+2M_\mu=-\beta_\nu<0.
\label{eq:num-drifted-barrier}
\end{equation}
Here $\beta_{\sqrt{0.08}}\simeq0.4893$ and $\beta_0\simeq0.4093$.

\end{document}